\renewcommand{\hl}{}
\newcommand{\hlrow}[0]{\rowcolor{yellow}}
\renewcommand{\hlrow}{}
\theoremstyle{definition}
\newtheorem{theorem}{Theorem}
\newtheorem*{theorem*}{Theorem}
\newtheorem{definition}[theorem]{Definition}
\newtheorem*{definition*}{Definition}
\newtheorem*{corollary*}{Corollary}
\newtheorem{example}{Example}
\newtheorem*{example*}{Example}
\newtheorem{proposition}{Proposition}
\newtheorem*{proposition*}{Proposition}
\newtheorem{lemma}[theorem]{Lemma}
\newtheorem*{lemma*}{Lemma}
\newtheorem*{conjecture*}{Conjecture}
\newtheorem{remark}[theorem]{Remark}
\newtheorem*{remark*}{Remark}
\algrenewcommand\algorithmicrequire{\textbf{Input:}}
\algrenewcommand\algorithmicensure{\textbf{Output:}}
\newcommand{\bR}{\mathbb R}
\def\subsubsection{%
  \@startsection
    {subsubsection}                 
    {3}                             
    {\parindent}                    
    {3.5ex plus 1.5ex minus 1.5ex}  
    {0.7ex plus .5ex minus 0ex}     
    {\normalfont\normalsize\itshape}
}
\title{\hl{
Linearizing Binary Optimization Problems\\
Using Variable Posets for Ising Machines
}}
\author{
    \IEEEauthorblockN{
        Kentaro~Ohno\IEEEauthorrefmark{1}\IEEEauthorrefmark{2}$^a$, 
        Nozomu~Togawa\IEEEauthorrefmark{2}$^b$,~\IEEEmembership{Member,~IEEE}
    }
    \thanks{
            \IEEEauthorrefmark{1}NTT, Tokyo 180-8585, Japan 
            \IEEEauthorrefmark{2}Department of Computer Science and Communications Engineering, Waseda University, Tokyo 169-8555, Japan \\
            Email: $^a$kentaro.ohno@togawa.cs.waseda.ac.jp,
            $^b$ntogawa@waseda.jp
        }
    }
\begin{document}

\maketitle

\begin{abstract}
    Ising machines are next-generation computers expected to efficiently sample near-optimal solutions of combinatorial optimization problems.
    Combinatorial optimization problems are
    modeled as quadratic unconstrained binary optimization (QUBO) problems
    to apply an Ising machine.
    However, current state-of-the-art Ising machines still often fail to output near-optimal solutions due to the complicated energy landscape of QUBO problems.
    Furthermore, the physical implementation of Ising machines severely restricts the size of QUBO problems to be input as a result of limited hardware graph structures.
    In this study, 
    we take a new approach to these challenges by injecting auxiliary penalties preserving the optimum,
    which reduces quadratic terms in QUBO objective functions.
    The process simultaneously simplifies the energy landscape of QUBO problems, allowing the search for near-optimal solutions, and makes QUBO problems sparser, facilitating encoding into Ising machines with restriction on the hardware graph structure. 
    We propose \hl{linearization of QUBO problems using variable posets} as an outcome of the approach.
    By applying the proposed method to synthetic QUBO instances and to multi-dimensional knapsack problems, 
    we empirically 
    validate 
    the effects on enhancing minor-embedding of QUBO problems and the performance of Ising machines.
\end{abstract}

\section{Introduction}\label{sec:introduction}
Combinatorial optimization is an important and well-studied research area 
with abundant applications in the field of operations research.
For example, the knapsack problem and its variants are 
famous combinatorial optimization problems with 
applications including production planning, resource-allocation and portfolio selection~\cite{cacchiani2022knapsack}.
Combinatorial optimization problems are often hard to deal with for traditional von Neumann-type computers due to their NP-hardness.
Various heuristics and meta-heuristics have been developed for handling large-scale combinatorial optimization problems.

Ising machines are attracting interests as a next-generation computing paradigm, especially for tackling hard combinatorial optimization problems~\cite{mohseni2022ising}.
Ising machines find 
heuristic solutions of combinatorial optimization problems in a class called quadratic unconstrained binary optimization (QUBO).
There are several types of Ising machines, including quantum annealing machines~\cite{johnson2011quantum,houck2012chip}, coherent Ising machines~\cite{wang2013coherent,honjo2021100} and specialized-circuit-based digital machines~\cite{aramon2019physics,yamaoka201520k,wang2021solving,yamamoto2020statica,goto2019combinatorial}, depending on the way they are physically implemented.

When utilizing an Ising machine, a combinatorial optimization problem is converted to a QUBO problem~\cite{lucas2014ising}.
Discrete variables are represented via binary variables and constraints are encoded into the objective function as penalties.
The total objective function should be quadratic so that the resulting model is indeed a QUBO problem.
After conversion to the QUBO problem,
binary variables are assigned to physical spins in the Ising machine.
The Ising machine is then executed to sample a solution by minimizing \emph{energy}, that is, the value of the objective function.
Performance of Ising machines solving a QUBO problem typically depends on the energy landscape~\cite{mohseni2022ising,stella2005optimization} of the objective function.
It is also subject to a graph structure associated with the QUBO problem, in which nodes and edges correspond respectively to variables and quadratic terms with non-zero coefficients, from a combinatorial point of view.

There are two major challenges for utilizing Ising machines.
One is that Ising machines suffer from finding near-optimal solutions when the QUBO problem involves a complex energy landscape.
Ising machines often output solutions with a large energy gap (e.g., a more than 10\% optimality gap) to an optimal solution of the problem~\cite{huang2022benchmarking}, on which a simple heuristic might achieve smaller gap. 
The issue casts doubt on the practical utility of Ising machines 
as meta-heuristic solvers and the gap needs to be filled. 
The other is that some major Ising machines have physical restriction on the structure of QUBO problems.
For example, a densely connected QUBO problem cannot be input directly to quantum annealing machines, since quantum bits on the machines corresponding to variables in the problem interact only with a limited group of other quantum bits~\cite{Dwaveadvantage,Dwaveadvantage2} in accordance with specific hardware graphs.
This severely limits the potential applicability of Ising machines to combinatorial optimization problems.

For the former challenge, attempts have been made to reduce complexity of the energy landscape. For example, merging several variables into one variable have been proposed to improve the quality of solutions for single-spin-flip-based Ising machines by deforming the energy landscape~\cite{shirai2022multi}. However, the process might change the optimum and thus requires iterations of applying an Ising machine to vary a set of merging variables.
For the latter challenge, 
minor-embedding~\cite{choi2008minor,choi2011minor} has been proposed to embed a QUBO problem with an arbitrary graph structure in a hardware graph.
That is, a variable in the QUBO problem is represented by a \emph{chain}, which is a set of possibly multiple connected physical variables.
However, a dense QUBO problem still requires a huge number of auxiliary variables to form chains, which degrades the performance of Ising machines~\cite{Dwaveadvantage,yamamoto2020statica}.
Moreover, such a requirement restricts the size of QUBO problems that can be input to a small one.

In this study, we take a new approach to tackle the issues by considering auxiliary constraint conditions for QUBO.
That is, we extract conditions that the optimal solution must satisfy, consider them as constraint conditions, and add them to the QUBO objective function as penalties.
By taking appropriate penalty terms and their coefficients, the quadratic terms in the objective function are reduced without changing the optimum.
The above process should 
have two effects.
First, the auxiliary penalties simplify the energy landscape associated with the QUBO problem, allowing an efficient search for a near-optimal solution via Ising machines. 
Second, the graph associated with the QUBO problem is made sparse by reducing quadratic terms, thereby reducing the number of variables added by minor-embedding.
Reduction of the number of additional variables for minor-embedding should result in alleviating the degradation of Ising machine performance and improving the embeddability of large QUBO problems.
Therefore, establishing such an approach is expected to mitigate the aforementioned issues on Ising machines.

We propose \hl{\emph{linearization using variable posets (partially ordered sets)} for QUBO}, a method realizing the above process by considering ordinal conditions of precedence 
on binary variables as auxiliary constraints.
The proposed method consists of two steps.
Step~1 is \hl{\emph{extracting variable posets}}, that is, to find a partial order of variables from a given optimization problem so that corresponding constraints preserve the optimum.
By considering a sufficient condition for the requirement, we develop a general strategy and fast algorithm for extracting a valid partial order.
Step~2 is \emph{linearization} of the QUBO problem with auxiliary penalties corresponding to the extracted order.
Overall, the proposed method is expected to mitigate the 
issues for Ising machines with feasible computational complexity.

We conduct experiments on synthetic QUBO problems and multi-dimensional knapsack problems (MKPs) to validate the effects of the proposed method.
The results show that  
the proposed method effectively mitigates the defects of minor-embedding of introducing additional variables and substantially improves Ising machine performance on the benchmark MKP instances. 

Our contribution is summarized as follows:
{
\setlength{\leftmargini}{10pt}
\begin{itemize}
    \item We propose a method of linearization of QUBO problems that improves minor-embedding and Ising machine performance while preserving the optimum of the 
    problem.
    \item We provide solid theoretical results and efficient algorithms for the proposed method with practical applications.
    \item We validate the effects of the proposed method on improving 
    minor-embedding and Ising machine performance through comprehensive experiments on synthetic QUBO problems and MKPs.
\end{itemize}
}

The rest of the paper is organized as follows.
Background on QUBO and Ising machines are explained in Section~\ref{sec:preliminaries}.
The notion of ordering variables is introduced and 
theoretical results are shown in Section~\ref{sec:theory}.
We explain the proposed method in Section~\ref{sec:proposed_method}.
Application of the method to practical problems are discussed in Section~\ref{sec:application}.
Experimental results are given in Section~\ref{sec:experiments}.
We summarize related work and further discussion in Section~\ref{sec:discussion}.
Section~\ref{sec:conclusion} concludes this paper.

\section{QUBO and Ising Machines}\label{sec:preliminaries}

We 
review related notions on Ising machines.
Throughout the paper, $n$ is a positive integer denoting the problem size, and $B_n \coloneqq \{0,1\}^n$ denotes a space of binary vectors.

\subsection{Quadratic Unconstrained Binary Optimization}

\emph{Quadratic unconstrained binary optimization (QUBO)} is a class of optimization problems over binary variables defined by a square matrix $Q \in \mathbb R^{n\times n}$ as follows:
\begin{align}
    &{\rm minimize \ } \phi(x) \coloneqq x^{\top} Q x \\
    &{\rm subject \ to \ } x \in B_n.
\end{align}
We also refer to the value $\phi(x)$ as \emph{energy} of $x$.
A QUBO problem is naturally associated with an undirected graph whose nodes and edges correspond to variables and non-zero off-diagonal entries of $Q$, respectively.
Various combinatorial optimization problems can be modeled as QUBO problems~\cite{lucas2014ising}.
When there are constraints on binary variables, \emph{penalty terms} are introduced to represent those constraints in a QUBO form.
If binary variables are constrained to a subset $C\subset B_n$, i.e., $C$ is a set of feasible solutions, then a penalty term corresponding to $C$ is a function $\psi: B_n \to \mathbb R$ satisfying both $\psi(x)=0$ for all $x\in C$ and $\psi(x)>0$ for all $x\in B_n\setminus C$.
The QUBO formulation of the constrained optimization problem
\begin{align}
    &{\rm minimize \ } \phi(x) \\
    &{\rm subject \ to \ } x \in C
\end{align}
with a quadratic objective function $\phi$ of $x$ is obtained as
\begin{align}
    &{\rm minimize \ } \phi(x) + \lambda \psi (x) \\
    &{\rm subject \ to \ } x \in B_n.
\end{align}
The coefficient $\lambda > 0$ of the penalty term is set sufficiently large so that $x$ violating constraints are sufficiently penalized.

\subsection{Ising Machines}

Near-optimal solutions of QUBO problems are expected to be efficiently sampled via computers called \emph{Ising machines}.
Generally, an Ising machine takes a QUBO problem as an input and returns heuristic solutions of the QUBO problem. 
\hl{
However, Ising machines tend to perform poorly when the input QUBO problem involves a complex energy landscape~\mbox{\cite{mohseni2022ising,stella2005optimization}}.
Here, the complexity is typically explained by the behavior of local minima, such as their number and sharpness, the minimum energy gap between the two lowest energy levels and so on.
Since practical combinatorial problems are typically associated with a badly complicated energy landscape when modeled in a QUBO form, 
this is a major issue of Ising machines for practical use.}

Several Ising machines are physically implemented with limited graph structures that we call \emph{hardware graphs}.
For example, D-Wave Advantage~\cite{Dwaveadvantage} and Advantage2~\cite{Dwaveadvantage2} machines are associated with the Pegasus and Zephyr graphs, respectively.
The hardware graph restricts the structure of an input QUBO problem in the sense that the graph associated with the QUBO problem must be a subgraph of the hardware graph.

\subsection{Minor-Embedding}

To embed a QUBO problem with an arbitrary graph structure to Ising machines with sparse hardware graphs, a technique called \emph{minor-embedding}~\cite{choi2008minor} has been developed.
We call the graph structure associated with the QUBO problem the \emph{input graph}.
Minor-embedding represents the input graph $I$ as a minor of the hardware graph $H$.
Specifically, a node in $I$ is represented by a set of connected nodes (called a \emph{chain}) in $H$.
The variables corresponding to nodes in a chain are penalized by their interaction so that they take the same values.
The strength of the penalty is called \emph{chain strength}.

When a QUBO problem is dense, i.e., the input graph has dense edges, minor-embedding requires a large number of hardware nodes to form chains.
Such large chains lead to performance degradation of Ising machines~\cite{yamamoto2020statica}.
Furthermore, the size of a hardware graph severely restricts the size of a dense input graph.
For a complete input graph, minor-embedding is also called \emph{clique embedding}.
Clique embedding (with relatively short chains) of complete graphs of size 180 and 232 has been found for 
16-Pegasus graph $P_{16}$ of D-Wave Advantage~\cite{Dwaveadvantage} with 5640 nodes and 
15-Zephyr graph $Z_{15}$ of D-Wave Advantage2~\cite{Dwaveadvantage2} with 7440 nodes, respectively.

\section{Order of Variables and Linearization}\label{sec:theory}

\begin{figure}[t]
\centering
\includegraphics[width=0.46\textwidth]{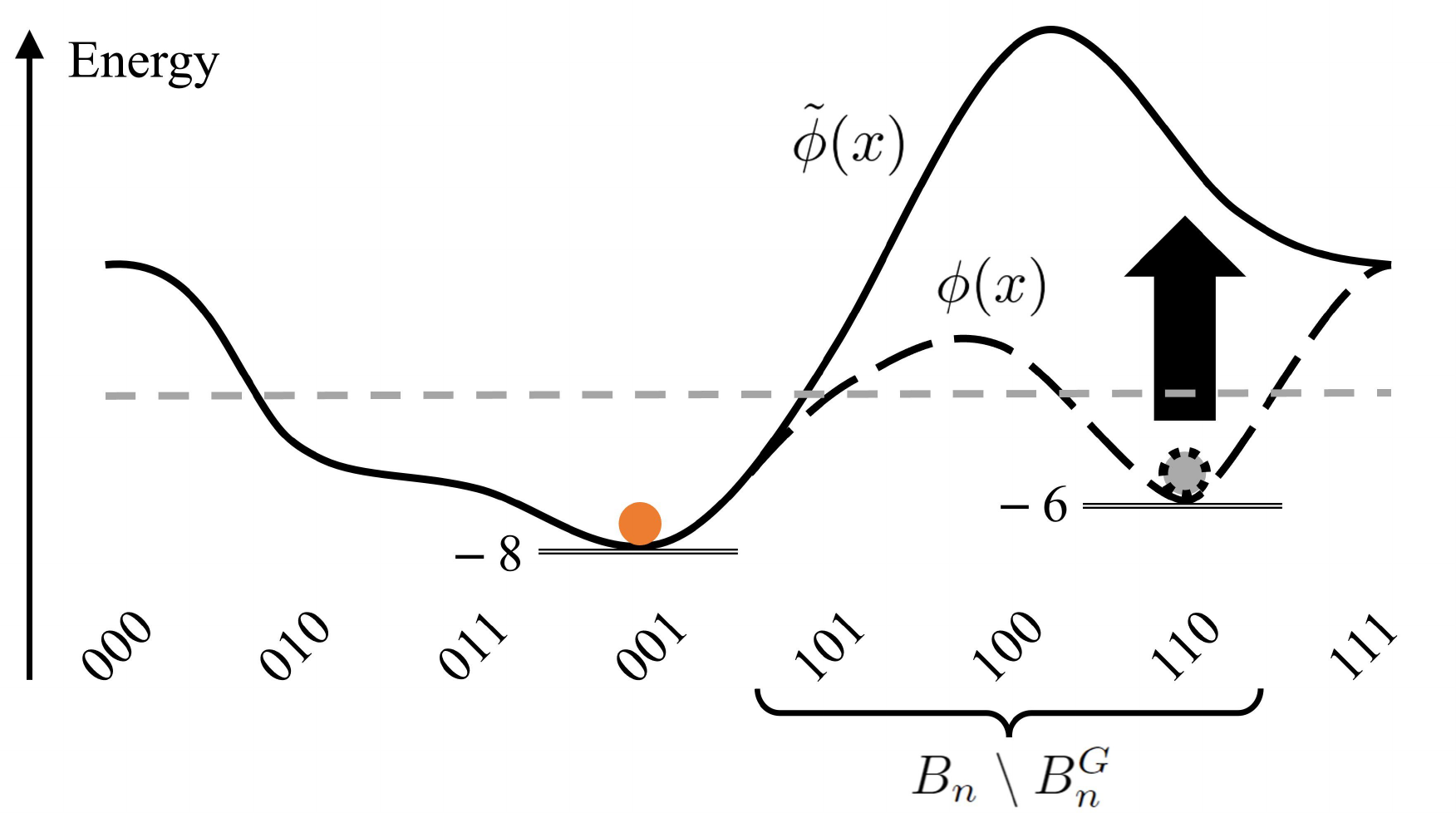}
\caption{
Conceptual figure of effect of auxiliary penalty on energy landscape. 
Dashed and solid lines (black) represent original landscape and modified landscape after adding auxiliary penalty, respectively.
Area crossing 
contour (dashed gray line) is made narrow by modification, restricting search space of Ising machines.
One local minimum (dashed circle) is removed by an auxiliary penalty, and a linearized problem has only one local minimum (red circle).
}
\label{fig:modify-landscape}
\end{figure}

We first give a motivating example of linearization using variable posets. 
Take a QUBO matrix over three variables
\begin{align}\label{eq:qubo_example_orig}
    Q = \begin{pmatrix}
        -3 &  2 & 7 \\
         0 & -5 & 7 \\
         0 &  0 & -8 \\
    \end{pmatrix}.
\end{align}
The QUBO problem $\min_x \phi(x)$ with $\phi(x) \coloneqq x^\top Q x$ has two local minima: 
$x=(1,1,0)$ with $\phi(x)=-6$ and $x=(0,0,1)$ with the global minimum $\phi(x)=-8$.
We consider 
conditions $x_1 = 1 \Rightarrow x_2 = 1$ and $x_1 = 1 \Rightarrow x_3 = 1$ as auxiliary constraints.
Such precedence conditions are obtained on knapsack problems, for example, as we discuss in Section~\ref{sec:application}.
Adding corresponding penalty terms $x_1 - x_1 x_2$ and $x_1 - x_1 x_3$ to 
$\phi(x)$ 
with coefficients equal to 
$Q_{1,2}=2$ and $Q_{1,3}=7$ yields
\begin{align*}
    \tilde \phi(x) \coloneqq &\ 
    \phi(x) + 2(x_1 - x_1 x_2) + 7(x_1 - x_1 x_3) \\
    = & -3 x_1 - 5 x_2 - 8 x_3 + 2 x_1 x_2 + 7 x_2 x_3 + 7 x_1 x_3 \\ 
     &+ 2(x_1 - x_1 x_2) + 7(x_1 - x_1 x_3) \\
    = &\ 6 x_1 - 5 x_2 - 8 x_3 + 7 x_1 x_3 = x^\top \tilde Q x
\end{align*}
with a new QUBO matrix
\begin{align}\label{eq:qubo_example_lin}
    \tilde Q \coloneqq \begin{pmatrix}
         6 &  \underline{0} & \underline{0} \\
         0 & -5 & 7 \\
         0 &  0 & -8 \\
    \end{pmatrix},    
\end{align}
which has fewer quadratic terms (the underlined numbers become zero in Eq.~(\ref{eq:qubo_example_lin})).
In fact, the new objective function $\tilde \phi(x)$ has the same minimum as the original function $\phi(x)$, that is, $\tilde \phi(x)=-8$ at $x=(0,0,1)$, which follows from Theorems~\ref{thm:linearization} and \ref{thm:sufficient_condition_valid_order} below.
Energy landscapes of the original and new QUBO problems $\min_x \tilde \phi(x)$ 
are compared in Fig.~\ref{fig:modify-landscape}.
One of the local minima of the original QUBO problem is eliminated by adding the penalties.
Thus, the linearization process above yields an equivalent and sparser QUBO problem with a simplified energy landscape.

In the following sections,
we give a formal description of the generalization of the above example.
We define the notion of ordering variables 
and explain the associated auxiliary penalties.
We then introduce linearization of QUBO problems and explain related theoretical results.

\subsection{Order of Variables and Associated Auxiliary Penalty}

We 
define 
notions on order of precedence on variables.

\begin{definition}\label{def:order}
    A \hl{\emph{partial order}} of $n$ binary variables $x=(x_1,\cdots,x_n)$ is a directed acyclic graph $G=(V,E)$ with a vertex set $V=\{x_1,\cdots,x_n\}$\footnote{
    Precisely, the nodes $x_1, \cdots, x_n \in V$ should be distinguished with the variables $(x_1,\cdots,x_n)\in B_n$ in a sense that $x_i$ is just a symbol as a node, not a value of 0 or 1.
    Although it might be consistent to label nodes as $1,\cdots,n$, we use $x_1, \cdots, x_n$ to emphasize that $G$ is defined on the variables.
    }
    and an edge set $E\subset V\times V$.
    We define a subset $B_n^G \subset B_n$ ordered by a partial order $G$ of variables as 
    \begin{align}
        B_n^G \coloneqq \{ x\in B_n \mid \forall (x_i, x_j)\in E, x_i=1 \Rightarrow x_j=1 \}. 
    \end{align}
    The partial order $G$ is \emph{valid} with respect to minimization of a function $\phi: B_n \to \mathbb R$ 
    if the following equality holds:
    \begin{align}
        \min \left\{\phi(x) \mid x \in B_n \right\} = 
        \min \left\{\phi(x) \mid x \in B_n^G \right\}.
    \end{align}
\end{definition}

By definition, a valid partial order of variables 
with respect to minimization of 
$\phi$ induces auxiliary constraints on the optimization problem $\min_x \phi(x)$ preserving the optimum.
Finding a valid partial order for given $\phi$ is a non-trivial task, which we argue in the later sections.


We discuss injecting penalties in accordance with a given partial order.
Let $\phi: B_n \to \mathbb R$ be an objective function of a minimization problem.
We consider an ordinal condition $x_i=1 \Rightarrow x_j=1$ 
in a partial order $G$ of variables that is valid with respect to minimization of $\phi$.
Since imposing the condition as a constraint 
preserves the minimum of $\phi$, 
so does adding a penalty term corresponding to the constraint. 
A penalty term representing the 
condition $x_i=1 \Rightarrow x_j=1$ is defined by
\begin{align}
    x_i - x_i x_j.
\end{align}
Indeed, it takes 1 only if $x_i=1$ and $x_j=0$, and 0 otherwise.
Therefore, we have the following result.

\begin{proposition}\label{prop:aux_penalty}
    Let $G$ be a partial order of variables valid with respect to minimization of a function $\phi: B_n \to \mathbb R$.
    Then, for any non-negative function $c: E\times B_n \to \mathbb R$, we have
    \begin{align}\label{eq:aux_penalty}
        \min_{x\in B_n} \phi(x) = \min_{x\in B_n} \left( \phi(x) + \sum_{e\in E} c(e, x) (x_i - x_i x_j) \right).
    \end{align}
    Moreover, if $x^* \in B_n$ attains the minimum of the right hand side, then it attains the minimum of the left hand side.
\end{proposition}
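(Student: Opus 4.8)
The plan is a short sandwiching argument resting on two elementary properties of the penalty sum $\Psi(x) \coloneqq \sum_{e\in E} c(e,x)(x_i - x_i x_j)$, where for $e = (x_i, x_j) \in E$ the factor $x_i - x_i x_j$ is the penalty term introduced above. First I would note that $\Psi$ is nonnegative on all of $B_n$: each factor $x_i - x_i x_j$ equals $1$ when $x_i = 1$ and $x_j = 0$ and equals $0$ otherwise, so it is nonnegative, and $c(e,\cdot) \geq 0$ by hypothesis; hence every summand, and thus $\Psi$, is $\geq 0$. Second, I would observe that $\Psi$ vanishes on $B_n^G$: if $x \in B_n^G$ then for every $(x_i, x_j) \in E$ the implication $x_i = 1 \Rightarrow x_j = 1$ holds, which is precisely the statement $x_i - x_i x_j = 0$, so all summands vanish.

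Given these, \req{eq:aux_penalty} follows from the two inequalities. For ``$\geq$'' (right-hand side at least left-hand side), nonnegativity of $\Psi$ gives $\phi(x) + \Psi(x) \geq \phi(x) \geq \min_{y\in B_n}\phi(y)$ for every $x\in B_n$, and minimizing over $x$ yields the bound. For ``$\leq$'', the validity of $G$ gives $\min_{y\in B_n}\phi(y) = \min_{y\in B_n^G}\phi(y)$; choosing any $x^\star\in B_n^G$ that attains the latter and using $\Psi(x^\star)=0$, we get $\min_{x\in B_n}\bigl(\phi(x)+\Psi(x)\bigr) \leq \phi(x^\star)+\Psi(x^\star) = \phi(x^\star) = \min_{y\in B_n}\phi(y)$. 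Combining the two inequalities proves the equality.

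For the last assertion, suppose $x^\star$ attains the minimum of the right-hand side. By the equality just proved, this minimum equals $\min_{y\in B_n}\phi(y)$, so $\phi(x^\star) + \Psi(x^\star) = \min_{y\in B_n}\phi(y)$; since $\Psi(x^\star)\geq 0$, this forces $\phi(x^\star) \leq \min_{y\in B_n}\phi(y)$ and hence equality, i.e., $x^\star$ attains the minimum of $\phi$ over $B_n$ (and moreover $\Psi(x^\star)=0$).

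I do not expect any real obstacle. The one point to keep in mind is that $c$ is only assumed nonnegative, not strictly positive, so a minimizer of the penalized objective need not itself lie in $B_n^G$; but the argument above never needs that, since nonnegativity of $\Psi$ together with knowledge of the optimal value already transfers optimality back to $\phi$.
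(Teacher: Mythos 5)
Your proof is correct and rests on exactly the same two observations as the paper's own argument (the penalty sum is nonnegative everywhere and vanishes on $B_n^G$, while validity of $G$ supplies a minimizer of $\phi$ inside $B_n^G$); the paper merely phrases the sandwich as a direct chain $\psi(x)\ge\phi(x)\ge\phi(x^*)=\psi(x^*)$ rather than two separate inequalities. No gap, and your closing remark about minimizers of the penalized objective not necessarily lying in $B_n^G$ is a fair caveat that the paper's proof likewise never needs.
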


We refer to Appendix~\ref{app:aux_penalty} for a proof, as it is straightforward.
The function $c(e,x)$ in Proposition~\ref{prop:aux_penalty} represents coefficients 
of auxiliary penalties.
Our key insight is that QUBO problems can be simplified by taking appropriate 
$c(e,x)$ depending on the objective function 
$\phi$.

\subsection{Linearization of QUBO Problems}

We introduce linearization of a QUBO matrix based on a partial  order of variables.
Then, we prove the equivalence of the linearized and original QUBO problems.

\begin{definition}
    Let $\phi(x) = x^{\top} Q x$ be a quadratic function of binary variables
    defined by an upper-triangle matrix $Q \in \mathbb R^{n\times n}$.
    Let $G=(V,E)$ be a partial order of variables valid with respect to minimization of $\phi$.
    For $i=1,2,\cdots, n$, we define
    \begin{align}
        U_i^+ \coloneqq \left\{j \in \{1,2,\cdots,n\} \mid (x_i,x_{j}) \in E, \ Q_{i,j} > 0 \right\}, \\
        U_i^- \coloneqq \left\{j \in \{1,2,\cdots,n\} \mid (x_i,x_{j}) \in E, \ Q_{j,i} > 0 \right\}.
    \end{align}
    Then, we define a new QUBO matrix $Q^G \in \mathbb R^{n\times n}$ as
    \begin{align}\label{eq:def_linearization}
        Q^G_{i,j} &\coloneqq \notag \\
        &\begin{dcases}
            0 & j \in U_i^+ {\rm \ or \ }i \in U_j^- \\
            Q_{i,i} + \sum_{j'\in U_i^+} Q_{i,j'} + \sum_{j'\in U_i^-} Q_{j',i} & i=j \\
            Q_{i,j} & {\rm otherwise.}
        \end{dcases}
    \end{align}
    The process to obtain $Q^G$ from $Q$ is called \emph{linearization} of $Q$ with respect to $G$.
\end{definition}

Recall that off-diagonal and diagonal entries of a QUBO matrix correspond to quadratic and linear terms of the QUBO objective function, respectively.
The new QUBO matrix $Q^G$ is obtained by converting each quadratic term $Q_{i,j}x_i x_j$ or $Q_{j,i}x_i x_j$ in the objective function with a positive coefficient satisfying $(x_i, x_j) \in E$ to a linear term $Q_{i,i}x_i$, which is the reason we call the process linearization.
Note that the number of off-diagonal entries of the QUBO matrix is reduced by $\sum_i \left( |U^+_i| + |U^-_i| \right)$ through linearization.

\begin{example}\label{ex:motivating}
    Consider the QUBO matrix $Q$ given in Eq.~(\ref{eq:qubo_example_orig}).
    Two precedence conditions $x_1 = 1 \Rightarrow x_2 = 1$ and $x_1 = 1 \Rightarrow x_3 = 1$ correspond to a partial order $G=(V,E)$ with $V=\{x_1, x_2, x_3\}$ and $E=\{(x_1, x_2), (x_1,x_3)\}$.
    In this setting, $U_1^+ = \{2,3\}$ and all other $U_i^+$ and $U_i^-$ are empty.
    Then, it can be easily checked that the linearized QUBO matrix $Q^G$ obtained by Eq.~(\ref{eq:def_linearization}) is equal to $\tilde Q$ given in Eq.~(\ref{eq:qubo_example_lin}).
\end{example}

Based on Proposition~\ref{prop:aux_penalty}, we have the following result.

\begin{theorem}\label{thm:linearization}
    Let $Q\in \mathbb R^{n \times n}$ be an upper-triangle matrix
    and $G$ be a partial order of variables valid with respect to minimization of a function $\phi(x) = x^\top Q x$.
    Then, the following equality holds:
    \begin{align}
        \min_{x \in B_n} x^\top Q x = \min_{x \in B_n} x^\top Q^G x.
    \end{align}
    Moreover, if $x^* \in B_n$ attains the minimum of the right hand side, then it attains the minimum of the left hand side.
\end{theorem}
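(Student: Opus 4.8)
The plan is to realize the linearized matrix $Q^G$ as an instance of the penalized objective appearing in Proposition~\ref{prop:aux_penalty}, so that the theorem follows immediately from that proposition. First I would fix the edge set $E$ of the valid order $G$ and, for each edge $(x_i,x_j)\in E$, choose the coefficient function $c$ to be the constant
\begin{align*}
    c\bigl((x_i,x_j),x\bigr) \coloneqq
    \begin{cases}
        Q_{i,j} & \text{if } Q_{i,j} > 0,\\
        Q_{j,i} & \text{if } Q_{j,i} > 0,\\
        0 & \text{otherwise,}
    \end{cases}
\end{align*}
(only one of the first two cases can occur since $Q$ is upper-triangular, so $j>i$ whenever $(x_i,x_j)\in E$ contributes a term). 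This $c$ is non-negative, so Proposition~\ref{prop:aux_penalty} applies and gives
\begin{align*}
    \min_{x\in B_n} x^\top Q x
    = \min_{x\in B_n}\left( x^\top Q x + \sum_{(x_i,x_j)\in E} c\bigl((x_i,x_j),x\bigr)\,(x_i - x_i x_j)\right),
\end{align*}
together with the claimed transfer of minimizers.

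The remaining task is the purely algebraic identity that the right-hand side above equals $x^\top Q^G x$ for all $x\in B_n$. Here I would expand the penalty sum: a nonzero edge $(x_i,x_j)\in E$ with $Q_{i,j}>0$ (i.e.\ $j\in U_i^+$) contributes $Q_{i,j}(x_i - x_i x_j)$, which cancels the existing quadratic term $Q_{i,j}x_i x_j$ and adds $Q_{i,j}x_i$ to the linear part; symmetrically, an edge with $Q_{j,i}>0$ (i.e.\ $i\in U_j^-$, equivalently the relevant index lies in the appropriate $U^-$ set) cancels $Q_{j,i}x_i x_j$ and adds $Q_{j,i}x_i = Q_{j,i}x_i^2$ to the diagonal coefficient of $x_i$. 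Collecting terms index by index, the off-diagonal coefficient of $x_i x_j$ becomes $0$ exactly when $j\in U_i^+$ or $i\in U_j^-$ and is unchanged otherwise, while the diagonal coefficient of $x_i$ becomes $Q_{i,i} + \sum_{j'\in U_i^+} Q_{i,j'} + \sum_{j'\in U_i^-} Q_{j',i}$ — this is precisely the definition of $Q^G$ in Eq.~(\ref{eq:def_linearization}). Using $x_i^2 = x_i$ on $B_n$ throughout keeps everything consistent between the linear-coefficient and diagonal-entry viewpoints.

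I do not expect a serious obstacle; the only thing to be careful about is bookkeeping. Specifically, one must check that the sets $U_i^+$ and $U_i^-$ are disjoint from the indices left untouched, so that no coefficient is modified twice, and that an edge cannot simultaneously force $Q_{i,j}>0$ and $Q_{j,i}>0$ — which holds because $Q$ is upper-triangular, so at most one of $Q_{i,j}, Q_{j,i}$ is nonzero for $i\neq j$. I would also note explicitly that the penalty terms are genuinely quadratic in $x$, so $x^\top Q^G x$ is again a legitimate QUBO objective. With the identity $x^\top Q x + (\text{penalties}) = x^\top Q^G x$ established pointwise on $B_n$, both the equality of minima and the minimizer-transfer statement are inherited verbatim from Proposition~\ref{prop:aux_penalty}, completing the proof.
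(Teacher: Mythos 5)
Your proposal is correct and follows essentially the same route as the paper: it defines the same non-negative coefficient function $c$ (picking $Q_{i,j}$ or $Q_{j,i}$ according to which is the positive upper-triangular entry), invokes Proposition~\ref{prop:aux_penalty}, and verifies the pointwise algebraic identity $x^\top Q x + \sum_e c(e,x)(x_i - x_i x_j) = x^\top Q^G x$ by matching coefficients against Eq.~(\ref{eq:def_linearization}). The bookkeeping observations you flag (at most one of $Q_{i,j},Q_{j,i}$ is nonzero off the diagonal, and $x_i^2=x_i$ on $B_n$) are exactly what makes the paper's displayed computation go through.
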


\begin{proof}
    We define a non-negative function $c: E \times B_n \to \mathbb R$ as
    \begin{align}
        c((x_i,x_j),x) \coloneqq
        \begin{cases}
            Q_{i,j} & i<j {\rm \ and \ } Q_{i,j}>0 \\
            Q_{j,i} & i>j {\rm \ and \ } Q_{j,i}>0 \\
            0 & \text{otherwise.}
        \end{cases}
    \end{align}
    Then, we have
    \begin{align*}
        x^\top Q x + \sum_{e\in E} & c(e, x) (x_i - x_i x_j) \\
        = \sum_i & \Biggl( \sum_j Q_{i,j}x_i x_j + \sum_{j \in U_i^+} Q_{i,j} (x_i - x_i x_j) \\ 
         & +  \sum_{j \in U_i^-} Q_{j,i} (x_i - x_i x_j) \Biggr) \\
        = \sum_i & \sum_j Q^G_{i,j} x_i x_j = x^\top Q^G x.
    \end{align*}
    Thus, we obtain the assertion by applying Proposition~\ref{prop:aux_penalty}.
\end{proof}

\subsection{Sufficient Condition for Validity of Order}

Since the definition of validity of a partial order $G$ of variables with respect to minimization of 
$\phi$ 
depends on the minimum of $\phi$, determining validity of $G$ might be as difficult as obtaining the minimum of $\phi$.
Therefore, it seems impractical to exactly determine the validity of a given partial order of variables in a reasonable time.
Instead, we give a sufficient condition for validity of $G$ when $\phi$ is a quadratic function.

\begin{theorem}\label{thm:sufficient_condition_valid_order}
    Let $G=(V,E)$ be a partial order of $n$ variables and
    $\phi(x)=x^\top Q x$ be a quadratic function with $Q\in \bR^{n\times n}$.
    For $i,j=1,2,\cdots,n$, we define
    \begin{align}
        a_{i,j} \coloneqq \begin{cases}
            Q_{i,j} + Q_{j,i} & i \ne j \\
            Q_{i,i} & i=j.
        \end{cases}
    \end{align}
    That is, $a_{i,j}$ is the coefficient of $x_i x_j$ (or $x_i$ if $i=j$) in $\phi(x)$.
    If an inequality
    \begin{align}\label{eq:detect_valid_edge}
        S_{i,j} \coloneqq \sum_{\substack{k=1 \\ k\ne i,j}}^n \max \{ 0, a_{j,k} - a_{i,k} \} + a_{j,j} - a_{i,i} \le 0
    \end{align}
    holds for every directed edge $(x_i,x_j)\in E$, then $G$ is valid with respect to minimization of $\phi$.  
\end{theorem}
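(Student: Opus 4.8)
The plan is to show that for any $x \in B_n$ we can find some $x' \in B_n^G$ with $\phi(x') \le \phi(x)$; this immediately gives $\min\{\phi(x) \mid x \in B_n^G\} \le \min\{\phi(x) \mid x \in B_n\}$, and the reverse inequality is trivial since $B_n^G \subseteq B_n$. The natural way to produce $x'$ is to "repair" violated edges one at a time: if $x$ violates some edge $(x_i, x_j) \in E$, meaning $x_i = 1$ but $x_j = 0$, then flip $x_i$ from $1$ to $0$. I would first verify that this single flip does not increase $\phi$, and then argue that iterating the repair terminates at a point of $B_n^G$ without ever increasing the objective.

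For the single-flip estimate, fix $x$ with $x_i = 1$, $x_j = 0$, and let $x'$ agree with $x$ except $x'_i = 0$. Writing $\phi$ in terms of the symmetric coefficients $a_{k,\ell}$ (so $\phi(x) = \sum_{k} a_{k,k} x_k + \sum_{k<\ell} a_{k,\ell} x_k x_\ell$), the change is
\begin{align*}
    \phi(x') - \phi(x) = -a_{i,i} - \sum_{\substack{k=1\\ k\ne i}}^n a_{i,k}\, x_k.
\end{align*}
Here I want to bound this above by $S_{i,j}$ (hence by $0$). Since $x_j = 0$, the $k=j$ term drops out, leaving $-a_{i,i} - \sum_{k \ne i,j} a_{i,k} x_k$. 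I still need to relate $-\sum_{k\ne i,j} a_{i,k} x_k$ to $\sum_{k \ne i,j}\max\{0, a_{j,k}-a_{i,k}\} + a_{j,j}$. The cleanest route is to compare $x'$ not with $x$ directly but with the configuration $x''$ obtained from $x$ by flipping $x_i$ to $0$ \emph{and} $x_j$ to $1$ (which moves "toward" satisfying the edge): one computes $\phi(x'') - \phi(x) = -a_{i,i} + a_{j,j} - \sum_{k\ne i,j}(a_{i,k}-a_{j,k})x_k \le a_{j,j} - a_{i,i} + \sum_{k\ne i,j}\max\{0, a_{j,k}-a_{i,k}\} = S_{i,j} \le 0$, using $x_k \in \{0,1\}$ to bound each term $-(a_{i,k}-a_{j,k})x_k$ by $\max\{0, a_{j,k}-a_{i,k}\}$. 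So flipping $x_i \to 0, x_j \to 1$ never increases $\phi$. This repair step makes progress in the sense that it strictly increases $\sum_k x_k \cdot 2^{?}$... — rather, I would track termination via a potential such as the number of violated edges plus a tie-breaker, or better, observe that each repair step replaces the pair $(x_i,x_j)=(1,0)$ by $(0,1)$, strictly decreasing the integer $\sum_{(x_i,x_j)\in E}[\,x_i=1, x_j=0\,]$ weighted appropriately; since $E$ is a DAG, processing edges in a reverse topological order of $G$ ensures that once an edge is satisfied it stays satisfied, so finitely many steps suffice.

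The main obstacle I anticipate is the termination/ordering argument: a naive "flip the tail to $0$" can re-violate a previously fixed edge (if some other edge points into $x_i$), and the "flip tail to $0$, head to $1$" variant can cascade through long chains. The fix is to exploit acyclicity of $G$: take a topological order of the vertices and process violated edges so that repairs propagate only "downstream," guaranteeing a finite, monotone (in $\phi$) sequence ending in $B_n^G$. I would also need the easy observation that $B_n^G$ is nonempty (the all-zeros vector lies in it) so the right-hand minimum is well defined. Once termination and the per-step inequality $\phi(x^{(t+1)}) \le \phi(x^{(t)})$ are in place, chaining them from any minimizer of $\phi$ over $B_n$ to a point of $B_n^G$ completes the proof.
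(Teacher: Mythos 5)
Your core argument is the same as the paper's: reduce validity to showing that any $x\in B_n$ can be carried into $B_n^G$ without increasing $\phi$, and do so by the swap $(x_i,x_j)=(1,0)\mapsto(0,1)$ on a violated edge, for which $\phi(x'')-\phi(x)=\sum_{k\ne i,j}(a_{j,k}-a_{i,k})x_k+a_{j,j}-a_{i,i}\le S_{i,j}\le 0$ — this is exactly the paper's key step (your initial single-flip attempt is a dead end, as you correctly recognize before discarding it). The only place you diverge is termination, and there your specific mechanism is shaky: the claim that processing violated edges in (reverse) topological order makes repairs "propagate only downstream" is false, because a swap on $(x_i,x_j)$ sets $x_i=0$, which can newly violate an \emph{incoming} edge $(x_k,x_i)$ with $x_k=1$, i.e., violations propagate upstream as well as downstream, so "once satisfied, stays satisfied" does not hold for any fixed processing order. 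The clean way to finish is the other idea you gesture at: fix a topological numbering $\mathrm{pos}$ of the DAG and observe that each swap strictly increases the bounded integer potential $\sum_{i:x_i=1}\mathrm{pos}(i)$ (since $\mathrm{pos}(i)<\mathrm{pos}(j)$ for every edge), so only finitely many swaps can occur regardless of the order in which violated edges are repaired. The paper instead argues termination by counting directed paths in the subgraph of currently violated edges; your potential-function route, once stated precisely, is arguably the more transparent of the two. You should also make the acyclicity hypothesis do its work explicitly (it is what guarantees a topological order exists); the nonemptiness remark about the all-zeros vector is fine but not needed once the descent argument lands in $B_n^G$.
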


Eq.~(\ref{eq:detect_valid_edge}) assures that $\phi(x)$ with $(x_i, x_j) = (0,1)$ is not larger than $\phi(x)$ with $(x_i, x_j) = (1,0)$.
Under such a situation, if there exists an optimal solution with $(x_i, x_j) = (1,0)$, then an optimal solution with $(x_i, x_j) = (0,1)$ also exists\footnote{
Note that $(x_i, x_j) = (0,1)$ indeed satisfies $x_i=1 \Rightarrow x_j=1$.
It might seem necessary to compare the objective values between $(x_i, x_j) = (1,0)$ and $(x_i, x_j) = (1,1)$ instead of $(x_i, x_j) = (0,1)$ to verify the validity of $x_i =1\Rightarrow x_j=1$.
However, $\phi(x_i=1, x_j=1) \le \phi(x_i=1, x_j=0)$ need not hold since
we consider the ordinal condition only for the optimum.
}.
Therefore, by checking Eq.~(\ref{eq:detect_valid_edge}) for all edges in $E$, it is concluded that $G$ is valid.
A detailed proof of Theorem~\ref{thm:sufficient_condition_valid_order} is in Appendix~\ref{app:sufficient_condition}.
For the QUBO matrix $Q$ and partial order $G$ in Example~\ref{ex:motivating}, we have $S_{1,2} = -2$ and $S_{1,3}=0$, which implies the validity of $G$. 
We provide another 
useful and instructive application of Theorem~\ref{thm:sufficient_condition_valid_order} in the following example.

\begin{example}\label{ex:symmetric_order}
    Let $d\le n$ be a positive integer.
    Assume that a function $\phi: B_n \to \bR$ is \emph{symmetric} with respect to variables $x_1,\cdots, x_d$, that is, invariant under permutation of values of $x_1,\cdots, x_d$.
    Then, we have $S_{i,j}=0$ for $i,j=1,\cdots,d$ with $i\ne j$, since $a_{i,i}=a_{j,j}$ and $a_{i,k}=a_{j,k}$ holds for any $k\ne i,j$.
    Thus, by Theorem~\ref{thm:sufficient_condition_valid_order}, a partial order defined by an edge set
    \begin{align}
        E= \{ (x_i, x_j) \mid 1\le i \le j \le d\}
    \end{align}
    is valid with respect to minimization of $\phi$.
    Note that $E$ defines a \emph{total order} on $V=\{x_1,\cdots, x_d\}$ and attains the largest possible edge set on $d$ nodes.
\end{example}

\section{Proposed Method}\label{sec:proposed_method}



We propose applying an Ising machine to a QUBO problem after linearizing it.
Specifically, the proposed method consists of the following steps.
First, we extract a valid partial order of variables from a given optimization problem.
Then, we linearize the QUBO matrix on the basis of the extracted order.
After that, the linearized QUBO problem is input to an Ising machine to sample a solution.
In the following, we discuss expected effects and algorithms of the proposed method.

\subsection{Effects on Application of Ising Machines}

There are two aspects regarding effects of the proposed method on application of Ising machines.
One is on the energy landscape of QUBO problems, and the other is on minor-embedding.
We expect that linearization has greater effects in both aspects with a larger number of edges $|E|$ in the extracted partial order $G$ of variables.
We explain 
the effects
below.

First, the proposed method improves solutions obtained with Ising machines by modifying the energy landscape.
Linearization is derived by adding auxiliary penalties to the objective function.
The penalties change the energy landscape of the QUBO problem as shown in Fig.~\ref{fig:modify-landscape}.
They restrict a region with low energy by increasing energy on $B_n \setminus B_n^G$. 
Since Ising machines sample lower-energy solutions with higher probabilities, restricting low-energy regions enables them to sample near-optimal solutions. 
Furthermore, linearization eliminates quadratic terms, thus 
possibly removing local minima, 
which seems favorable for both local and global search algorithms.
For these reasons, we expect that linearization will enable Ising machines to output lower-energy solutions.

Second, the proposed method mitigates defects of minor-embedding by making a QUBO matrix sparser.
A linearized QUBO matrix $Q^G$ has fewer off-diagonal elements than the original $Q$.
Therefore, 
linearization would reduce 
the number of auxiliary variables required for minor-embedding.
This would further result in mitigating performance degradation of Ising machines due to embedding with large chains as well as enabling the embedding of larger QUBO problems.

\subsection{Extraction of Valid Partial Order}

In the first step of the proposed method, we extract a valid partial order of variables from a given problem.
This might be done in a problem-specific or general way.
In the following, we explain a general algorithm to extract a valid partial order from a given QUBO problem on the basis of Theorem~\ref{thm:sufficient_condition_valid_order}.
We discuss problem-specific cases in Section~\ref{sec:application} where a valid partial order can be extracted in a more computationally efficient way.

\begin{algorithm}[t]
 \caption{Extraction of valid partial order}\label{alg:extract_order}
 \begin{algorithmic}[1]
 \Require{QUBO matrix $Q\in \bR^{n\times n}$}
 \Ensure{Edge set $E$ of valid partial order}
  \State $a \leftarrow Q + Q^\top$,
  $E \leftarrow \varnothing$
  \For {$i=1,2,\cdots,n$}
    \For {$j=1,2,\cdots,n$}
      \If {$i\ne j$ and $(x_j, x_i) \notin E$ and $Q_{j,j} \le Q_{i,i}$} \label{state:detect_start}
        \State $S \leftarrow Q_{j,j} - Q_{i,i}$ \label{state:calculate_score_start}
        \For {$k=1,2,\cdots,n$}
          \If {$k\ne i,j$ and $a_{j,k}>a_{i,k}$}
            \State $S \leftarrow S + a_{j,k}-a_{i,k}$
            \If {$S>0$} \label{state:pruning_start}
              \State \textbf{break}
            \EndIf \label{state:pruning_end}
          \EndIf
        \EndFor \label{state:calculate_score_end}
        \If {$S \le 0$} \label{state:append_edge_start}
          \State $E \leftarrow E \cup \{(x_i, x_j)\}$
        \EndIf \label{state:append_edge_end}
      \EndIf \label{state:detect_end}
    \EndFor
  \EndFor
  \State \Return $E$
 \end{algorithmic} 
\end{algorithm}

We describe an algorithm that takes a QUBO matrix as an input and returns an edge set of a valid partial order in Algorithm~\ref{alg:extract_order}.
The algorithm is based on 
Theorem~\ref{thm:sufficient_condition_valid_order}.
That is, we extract directed edges satisfying Eq.~(\ref{eq:detect_valid_edge}) to form a set of edges.
An edge set $E$ is initialized as an empty set.
In lines~\ref{state:detect_start}-\ref{state:detect_end}, 
$S_{i,j}$ in Eq.~(\ref{eq:detect_valid_edge}) for each pair $(x_i, x_j)$ of variables is calculated, and 
the pair is added to $E$ if it passes the test, i.e., $S_{i,j}\le 0$.
A condition $(x_j, x_i)\notin E$ is imposed in line~\ref{state:detect_start} to ensure that the obtained directed graph $G=(V,E)$ does not have a cycle.
Note that 
$(x_i, x_j)$ should satisfy $a_{j,j}\le a_{i,i}$ for Eq.~(\ref{eq:detect_valid_edge}) to be satisfied, 
since $\max\{0, a_{j,k} - a_{i,k}\} \ge 0$. 
Therefore, we check this inequality in line~\ref{state:detect_start} to omit redundant computation.
In lines~\ref{state:calculate_score_start}-\ref{state:calculate_score_end}, $S_{i,j}$ in Eq.~(\ref{eq:detect_valid_edge}) is calculated as $S$.
We break the loop in lines~\ref{state:pruning_start}-\ref{state:pruning_end} as soon as $S$ becomes larger than $0$, since Eq.~(\ref{eq:detect_valid_edge}) never holds then.
This pruning again omits redundant computation.
If $S_{i,j}\le 0$ is checked for the pair, we find the pair represents a valid precedence and is added to $E$ in lines~\ref{state:append_edge_start}-\ref{state:append_edge_end}.
Algorithm~\ref{alg:extract_order} has computational complexity of $O(n^3)$ in the worst case.
If the pruning is effectively triggered for most of the loops over $k$, complexity approaches to $O(n^2)$.
Correctness of Algorithm~\ref{alg:extract_order} is summarized as follows.
See Appendix~\ref{app:algorithm_completeness} for a proof.

\begin{theorem}\label{thm:algorithm_completeness}
    Let $Q\in \bR^{n \times n}$ be a square real matrix and $x=(x_1, \cdots, x_n)$ be a vector of binary variables.
    Then, a graph $G=(V, E)$ with a node set $V=\{x_1, \cdots, x_n\}$ and an edge set $E$ obtained by running Algorithm~\ref{alg:extract_order} with $Q$ given as the input is a 
    directed acyclic graph, i.e., partial order of variables $x$.
    Moreover, $G$ is valid with respect to minimization of 
    $\phi(x) = x^\top Q x$.
\end{theorem}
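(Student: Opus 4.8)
The plan is to prove the two assertions in turn: that $G=(V,E)$ is a directed acyclic graph, and that it is valid for minimizing $\phi$. Validity will follow from Theorem~\ref{thm:sufficient_condition_valid_order} once acyclicity is in hand, so the genuine work is the acyclicity argument. Throughout I write $a=Q+Q^\top$, so $a_{i,j}=Q_{i,j}+Q_{j,i}$, and use that $a$ is symmetric.

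First I would check that every edge inserted into $E$ satisfies $S_{i,j}\le 0$ in the sense of Eq.~(\ref{eq:detect_valid_edge}). This is mostly bookkeeping: the value $S$ built in lines~\ref{state:calculate_score_start}--\ref{state:calculate_score_end} equals $Q_{j,j}-Q_{i,i}+\sum_{k\ne i,j}\max\{0,a_{j,k}-a_{i,k}\}$, which is exactly $S_{i,j}$ (the diagonal contribution is taken from $Q$ rather than from $a$, so the factor-of-two discrepancy on the diagonal of $a$ does no harm, and Theorem~\ref{thm:sufficient_condition_valid_order} is stated for a general square $Q$, matching the hypothesis here). For the early break in lines~\ref{state:pruning_start}--\ref{state:pruning_end}, note that at every moment the running $S$ is $Q_{j,j}-Q_{i,i}$ plus a partial sum of the nonnegative terms defining $S_{i,j}$, hence $S\le S_{i,j}$ at all times; so a break with $S>0$ correctly certifies $S_{i,j}>0$, and if the $k$-loop runs to completion then $S=S_{i,j}$. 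Therefore $(x_i,x_j)$ is added only if $S_{i,j}\le 0$, and, granted that $G$ is an order, Theorem~\ref{thm:sufficient_condition_valid_order} yields validity.

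For acyclicity I would produce a potential that strictly decreases along edges. Put $\sigma_i:=\sum_{k=1}^n a_{i,k}$ and $\pi(i):=(Q_{i,i},\sigma_i,-i)$, ordered lexicographically, and show $(x_i,x_j)\in E\Rightarrow \pi(i)>\pi(j)$ by three cases. (i) The guard in line~\ref{state:detect_start} forces $Q_{j,j}\le Q_{i,i}$; if strict, done. (ii) If $Q_{i,i}=Q_{j,j}$, then $S_{i,j}\le 0$ makes each term $\max\{0,a_{j,k}-a_{i,k}\}$ vanish, i.e. $a_{j,k}\le a_{i,k}$ for every $k\ne i,j$; summing over all $k$ and using $a_{i,j}=a_{j,i}$ together with $a_{i,i}=a_{j,j}$ gives $\sigma_j\le\sigma_i$, so if strict, done. (iii) If in addition $\sigma_i=\sigma_j$, the same vanishing sum forces $a_{i,k}=a_{j,k}$ for all $k\ne i,j$, hence also $S_{j,i}=0$; now suppose toward a contradiction that $j<i$. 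Then the ordered pair $(x_j,x_i)$ is examined strictly before $(x_i,x_j)$ (which, note, can be inserted only at the single iteration with loop variables $(i,j)$), and at that earlier iteration all guards pass — $j\ne i$, $(x_i,x_j)\notin E$ at that time, $Q_{i,i}\le Q_{j,j}$, and $S_{j,i}\le 0$ — so $(x_j,x_i)$ is added to $E$; but then the guard $(x_j,x_i)\notin E$ later blocks $(x_i,x_j)$, a contradiction. Hence $i<j$, i.e. $-i>-j$. In every case $\pi(i)>\pi(j)$, so $E$ contains no directed cycle; since $i\ne j$ is also enforced, $G$ is a DAG, i.e., an order of the variables.

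The main obstacle is case (iii): one must combine the ``$S_{i,j}=0$ forces equality of rows everywhere'' observation with a precise reading of the iteration order and the $(x_j,x_i)\notin E$ guard to rule out both $2$-cycles and longer ties. Wrapping the three cases in the lexicographic potential $\pi$ is what makes the argument clean, but the delicate part is the algorithm-level bookkeeping — that a given ordered pair is a candidate for insertion exactly once, and identifying which of $(x_i,x_j)$, $(x_j,x_i)$ is visited first. The remaining ingredients (identifying the accumulated $S$ with $S_{i,j}$, the partial-sum bound that justifies the pruning, and the appeal to Theorem~\ref{thm:sufficient_condition_valid_order}) are routine.
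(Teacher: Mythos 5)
Your proposal is correct, and it proves acyclicity by a genuinely different (and arguably cleaner) route than the paper. The paper argues globally: it supposes a directed cycle $(x_{i_1},x_{i_2}),\dots,(x_{i_l},x_{i_1})$ exists, deduces from $S_{i_t,i_{t+1}}\le 0$ that $a_{i_1,i_1}=\cdots=a_{i_l,i_l}$ and then that all the corresponding rows of $a$ agree entrywise — invoking a separate lemma (a chain of inequalities obtained by symmetry of $a$) to handle the case where the comparison index $k$ itself lies on the cycle — so that the cycle variables are mutually symmetric; it then observes that for symmetric pairs the loop order together with the guard $(x_j,x_i)\notin E$ only ever inserts the edge from the smaller to the larger index, which is incompatible with a cycle. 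Your argument is local: you show every single edge strictly decreases the lexicographic potential $\pi(i)=(Q_{i,i},\sigma_i,-i)$, with the three tie-breaking levels corresponding exactly to the paper's three stages (equal diagonals, then entrywise row domination, then index order). The summation $\sigma_i=\sum_k a_{i,k}$ absorbs the awkward ``comparison index inside the pair'' terms via $a_{i,j}=a_{j,i}$ and $a_{i,i}=a_{j,j}$, so you need no analogue of the paper's auxiliary lemma; and your case (iii) makes explicit the algorithm-level bookkeeping (each ordered pair is a candidate exactly once, and $(x_j,x_i)$ with $j<i$ is visited first) that the paper compresses into one sentence. The remaining ingredients — identifying the accumulated $S$ with $S_{i,j}$, the partial-sum justification of the pruning, and deducing validity from Theorem~\ref{thm:sufficient_condition_valid_order} — match the paper. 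Both proofs rest on the same key observations; yours trades the cycle-chasing and helper lemma for a potential function, which buys a shorter and more self-contained acyclicity argument.
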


We remark that Algorithm~\ref{alg:extract_order} can be more optimized for sparse matrices by bounding 
the iteration range of $j$ to $e(i)$ and that of $k$ to union $e(i) \cup e(j)$, where we write the set of adjacent edges of node $x_i \in V$ in a graph associated with $Q$ as $e(i)$.
Worst-case complexity of the extended version of Algorithm~\ref{alg:extract_order} is $O({\rm OD}(Q)d)$, where ${\rm OD}(Q)$ is the number of non-zero off-diagonal entries of $Q$ and $d$ is the maximum degree of a graph associated with $Q$.
The pruning in a loop over $k$ should reduce the complexity to near $O({\rm OD}(Q))$ for QUBO instances on which variables are hard to order.
Since the extension is straightforward, we omit further discussion of it.

On both dense and sparse settings, we expect that the algorithms run practically fast for most cases, since $O({\rm OD}(Q))$ ($=O(n^2)$ for dense $Q$) is typically the same complexity as preparing the QUBO matrix $Q$.
When the running time exceeds this complexity, then the obtained edge set $E$ should account for a certain percentage of all edges in a graph associated with $Q$.
Based on the assumption that linearization has larger positive effects
with larger $|E|$, the proposed method should have greater impact if the running time is long, achieving a good trade-off between time and effectiveness.

\begin{figure}[t]
\centering
\includegraphics[width=0.48\textwidth]{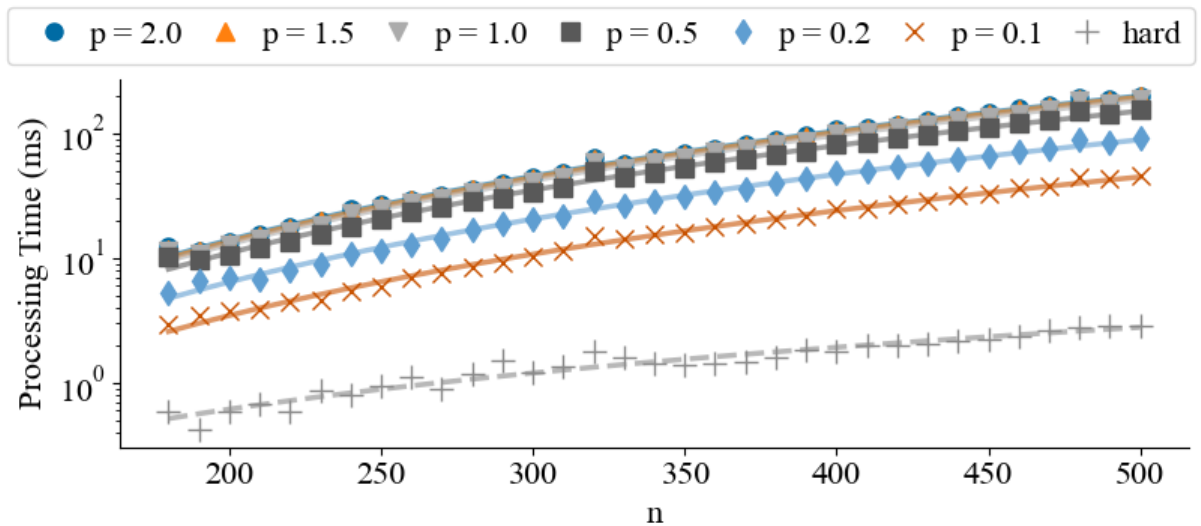}
\caption{
Running time of Algorithm~\ref{alg:extract_order} averaged over 10 randomly generated QUBO instances for each problem size $n$ with power curve fitting.
}
\label{fig:alg2_runtime}
\end{figure}

\begin{table}[t]
  \caption{Scaling Exponent of Running Time for Algorithm~\ref{alg:extract_order}.}
  \label{tab:scaling_algorithm2_runtime}
  \centering
  \begin{tabular}{@{\hspace{4pt}}c|c@{\hspace{8pt}}cccccc|c@{\hspace{4pt}}}
    \hline
    Problem Class & $p$ & 2.0 & 1.5 &1.0 & 0.5 & 0.2 & 0.1 & hard \\
    \hline 
    Exponent   && 2.90 & 2.91 & 2.90 & 2.86 & 2.86 & 2.79 & 1.65 \\
    \hline
  \end{tabular}
\end{table}

We demonstrate the effect of the pruning in 
Algorithm~\ref{alg:extract_order}.
Running times of Algorithm~\ref{alg:extract_order} on synthetic QUBO instances of various size over several problem classes are shown in Fig.~\ref{fig:alg2_runtime}.
A parameter $p$ of problem classes controls the number $|E|$ of ordered pairs extracted via Algorithm~\ref{alg:extract_order}.
Specifically, $|E|$ is large for large $p$.
See Section~\ref{sec:experiments} for details of synthetic QUBO problems and dependence of $|E|$ on $p$. 
Fig.~\ref{fig:alg2_runtime} indicates that small $p$ leads to short processing time.
This is because the pruning in a loop over $k$ in Algorithm~\ref{alg:extract_order} is triggered almost every time for small $p$, since most pairs $(x_i, x_j)$ satisfy $S_{i,j} > 0$ for small $p$.
We also prepared another set of synthetic QUBO instances by sampling entries $Q_{i,j}$ of an upper-triangle QUBO matrix uniformly from $\{-1, 0, 1\}$.
We call them \emph{hard} instances, since we cannot expect variables of 
the problems to be ordered at all by Algorithm~\ref{alg:extract_order}.
On hard instances, positivity of $S_{i,j}$ for each pair $(x_i, x_j)$ is detected on early stages in the loop of $k$, and so the total processing time should be near $O(n^2)$ rather than $O(n^3)$.
We indeed observed that the running times on hard instances are a magnitude shorter than those on the other synthetic QUBO instances in Fig.~\ref{fig:alg2_runtime}.
We performed power regression on the running time, summarize the 
exponents in Table~\ref{tab:scaling_algorithm2_runtime}, and plot the fitted curves in Fig.~\ref{fig:alg2_runtime}.
As expected, the 
running time scales about on the order of $n^3$ for large $p$ and with slightly smaller exponents for small $p$.
It scales with an exponent less than 2 on hard instances, which indicates that Algorithm~\ref{alg:extract_order} runs fast due to the pruning on QUBO instances on which variables are clearly hard to order.
This is preferable for a practical use, since typically in practical complex problems, most variables are hard to order and several variables can be ordered.

\subsection{Linearization Algorithm}

\begin{algorithm}[t]
 \caption{Linearization of QUBO}\label{alg:qubo_linearization}
 \begin{algorithmic}[1]
 \Require{
 QUBO matrix $Q\in \bR^{n\times n}$, Valid partial order $G=(V,E)$}
 \Ensure{Linearized QUBO matrix $Q^{G}\in \bR^{n\times n}$}
  \For {$(x_i,x_j) \in E$} \label{state:sparsification_start}
  \If {$Q_{i,j} > 0$}
    \State $Q_{i,i} \leftarrow Q_{i,i} + Q_{i,j}$
    \State $Q_{i,j} \leftarrow 0$
  \EndIf
  \If {$Q_{j,i} > 0$}
    \State $Q_{i,i} \leftarrow Q_{i,i} + Q_{j,i}$
    \State $Q_{j,i} \leftarrow 0$
  \EndIf
  \EndFor \label{state:sparsification_end}
 \State \Return $Q$ 
 \end{algorithmic} 
\end{algorithm}

We proceed to the second step in the proposed method: linearization.
Algorithm~\ref{alg:qubo_linearization} shows a pseudo-code for linearization of a QUBO matrix.
As it simply computes $Q^G$ following the definition given in Eq.~(\ref{eq:def_linearization}),
the correctness of Algorithm~\ref{alg:qubo_linearization} is straightforward, so we omit the proof.

If one uses Algorithm~\ref{alg:extract_order} to create a partial order $G$ as an input of  Algorithm~\ref{alg:qubo_linearization}, then a combined process that takes $Q$ as an input and outputs a linearized QUBO matrix can be implemented in one algorithm in a more optimized way.
That is, instead of linearizing $Q$ after calculating whole $E$, we may linearize $Q$ each time an ordered pair $(x_i, x_j)$ is found (lines~\ref{state:append_edge_start}-\ref{state:append_edge_end} in Algorithm~\ref{alg:extract_order}).
It also allows whole use of $E$ in Algorithm~\ref{alg:extract_order} to be removed, which simplifies the algorithm.

\section{Application to Practical Problems}\label{sec:application}

We illustrate applications of the proposed method to practical problems.
We take (multi-dimensional) knapsack problems as a typical class of combinatorial optimization problems, show an example of problem-specific ways to extract a partial order of variables, and explain the applicability of the proposed method.

\subsection{Knapsack Problems}

A \emph{knapsack problem} is a combinatorial optimization problem defined as follows.
A set of $n$ items is given and each item is associated with value $v_i$ and weight $w_i$.
Capacity $C$ of a knapsack is also given.
Here, we assume all these values are positive integers.
The objective is to obtain a subset of items with the maximum total value under a constraint that the total weight does not exceed the capacity.
A knapsack problem is mathematically modeled as the following:
\begin{align}
    \max_{x\in B_n} \left\{\sum v_i x_i \middle| \ \sum w_i x_i \le C\right\}.
\end{align}
Binary variables $x=(x_1, \cdots, x_n)$ are decision variables, and $x_i=1$ is interpreted to mean that item $i$ is selected.

A knapsack problem is converted to a QUBO problem by representing an inequality constraint $\sum w_i x_i \le C$ as a penalty term~\cite{lucas2014ising}.
\hl{To this end, we introduce an integer variable $0 \le z \le C$ with a constraint $\sum w_i x_i = z$.
Representing $z$ with binary expansion, the penalty term is defined as follows:}
\begin{align}\label{eq:inequality_penalty}
    k &= \lfloor \log C \rfloor + 1, \ 
    R = C+1 - 2^{k-1} ,\notag \\
    H_{\rm ineq} &= \left(\sum_{i=1}^n w_i x_i 
    - \sum_{i=1}^{k-1} 2^{i-1}y_i - R y_k \right)^2 .
\end{align}
\hl{Here, $y_1,\cdots, y_k$ are auxiliary binary variables.
Note that every integer from $0$ to $C$ can be represented by $\sum_{i=1}^{k-1} 2^{i-1}y_i + R y_k$ for some assignment of $y_1, \cdots, y_k$, e.g.,
$y_1=\cdots=y_k=1$ corresponds to $\sum w_i x_i = C$.}
Since a knapsack problem is a maximization problem, we flip the sign of the objective function. 
By taking a sufficiently large penalty coefficient $\lambda$, we obtain a QUBO problem:
\begin{align}\label{eq:kp_qubo}
    \min_{(x,y)\in B_{n+k}} \left( -\sum_{i=1}^n v_i x_i + \lambda H_{\rm ineq} \right).
\end{align}

\subsection{Partial Order of Variables in Knapsack Problems}

For the QUBO problem Eq.~(\ref{eq:kp_qubo}), 
we define a partial order $G=(V,E)$ on a set $V=\{x_1,\cdots,x_n\}$ of variables as follows:
\begin{align}
    E =
    \left\{(x_i, x_j) \in V\times V \left |
    \begin{array}{l}
    v_i \le v_j, w_i \ge w_j, \\
    (v_i=v_j, w_i=w_j \Rightarrow i<j)
    \end{array}
    \right.
    \right\}.
\end{align}
The partial order $G$ is valid with respect to minimization of the objective function in Eq.~(\ref{eq:kp_qubo}). 
A proof sketch is shown as follows.
Fix a set of selected items except item $i$ and $j$.
When item $j$ has a higher value than item $i$, then we obtain a higher total value by selecting item $j$ rather than selecting item $i$ if the knapsack admits.
If item $j$ has less weight than item $i$, then item $j$ can be selected satisfying the constraint when item $i$ can be selected.
Therefore, if both conditions on values and weights hold between item $i$ and $j$, then we may select item $j$ prior to item $i$.
In other words, we may add an auxiliary constraint $x_i = 1 \Rightarrow x_j = 1$.
A rigorous proof is given by extending Theorem~\ref{thm:sufficient_condition_valid_order} to a constrained setting with inequalities, which is 
left to Appendix~\ref{app:sufficient_condition}.
Note that when items $i$ and $j$ have exactly the same values and weights, 
there is a subtlety in considering 
two-way constraints $x_i = 1 \Rightarrow x_j = 1$ and $x_j = 1 \Rightarrow x_i = 1$.
Adding both constraints prohibits selecting only either item $i$ or $j$, which possibly changes the optimum of the knapsack problem.
To avoid this, we add a restriction that $x_i = 1 \Rightarrow x_j = 1$ only when $i<j$ if the values and weights coincide.
We remark that this corresponds to removing cycles in $G$, which illustrates why we impose acyclicity of a graph to be a partial order of variables in Definition~\ref{def:order}.

Every quadratic term of form $x_i x_j$ appears in the objective function in Eq.~(\ref{eq:kp_qubo}) with a positive coefficient.
Therefore, each quadratic term corresponding to an edge $(x_i, x_j)\in E$ is reduced to a linear term through linearization.

\subsection{Generalization to Multi-Dimensional Knapsack Problems}

The partial order of variables on a knapsack problem given in the previous section is easily extended to a \emph{multi-dimensional knapsack problem} (MKP).
An MKP is a generalization of a knapsack problem where multiple types of knapsack constraints are given.
Specifically,
$m$ types of capacities $C_k \ (k=1,\cdots,m)$ and $m$ types of weights $w_{k,i} \ (k=1,\cdots,m)$ of each item $i$ are given where $m$ is a positive integer.
The objective is to maximize the total value $\sum_i v_i x_i$ under $m$ constraints $\sum_i w_{k,i} x_i \le C_k$ for $k=1,\cdots,m$ instead of only one constraint.
In a similar manner to knapsack problems, a QUBO objective function $H$ and a valid partial order $G=(V,E)$ are defined as follows:
\begin{align}\label{eq:mkp_qubo}
    &H = -\sum_i v_i x_i + \lambda \bigl( H^{(1)}_{\rm ineq} + \cdots + H^{(m)}_{\rm ineq} \bigr), \\ \label{eq:mkp_order}
    &E=\left\{(x_i, x_j) 
    \left|
    \begin{array}{l}
        v_i \le v_j, w_{k,i} \ge w_{k,j} \ \forall k, \\
        \left(\begin{aligned}
            v_i &= v_j,\\ w_{k,i} &= w_{k,j} \ \forall k
        \end{aligned} \Rightarrow i<j \right)
    \end{array}
    \right.
    \right\}.
\end{align}
Here, $H^{(k)}_{\rm ineq}$ denotes a penalty term corresponding to $k$-th weights defined similarly as in Eq.~(\ref{eq:inequality_penalty}) for $k=1,\cdots,m$.
The edge set $E$ is extracted by comparing $(m+1)$ scalars for each pair $(x_i, x_j)$.
Computational complexity of such an algorithm is $O(n^2 m)$.

\section{Experiments}\label{sec:experiments}

We conduct numerical experiments to evaluate the effects of the proposed method on synthetic QUBO instances and MKP instances.
We first introduce problem instances. 
Then, the effects on (i) degradation of Ising machine performance via minor-embedding, (ii) improvement of embeddability of large-sized problems, and (iii) performance improvement on practical problems are examined.

We use Amplify Annealing Engine (AE)~\cite{amplify} as an Ising machine with an execution time of 1 second.
For minor-embedding search, we use the minorminer algorithm~\cite{cai2014practical} with some enhancements~\mbox{\cite{zbinden2020embedding}}.
See Appendix~\ref{app:experiment_setup} for details on the computational setup.

\subsection{Problem Instances}

We introduce datasets of problem instances on which the proposed method is evaluated.
As a notational convention, $U(l,u)$ denotes a uniform probability distribution over integers in an interval $[l, u]$ with integers $l,u$, and $\lfloor x \rfloor$ denotes the largest integer such that $\lfloor x \rfloor \le x$ for a real number $x$.

\subsubsection{Synthetic QUBO Instances}

Let $n$ be a positive integer. 
An upper-triangle QUBO matrix $Q\in \mathbb R^{n \times n}$ is generated as follows with a positive integer $s$ and positive real number $p$ as inputs.
Set $o \coloneqq sp$.
Every off-diagonal upper-triangle entry of $Q$ is sampled independently from $U(1+o, s+o)$.
Every diagonal entry of $Q$ is sampled independently from $U(-\lfloor p(n-1)(s+1) \rfloor -o,-1-o)$.
The generated QUBO matrix has two properties.
First, it represents a fully connected graph structure with positive edge weights.
Second, large $p$ leads to a large number of ordered pairs of variables obtained by Algorithm~\ref{alg:extract_order}.
Therefore, we can evaluate the effect of the proposed method by controlling the value of $p$.
For more details, see Appendix~\ref{app:synthetic_qubo}.

We generated 10 QUBO instances for every combination of $s=10$, $n\in \{180, 232\}$ and $p \in \{0.1, 0.2, 0.5, 1.0, 1.5, 2.0\}$ following the above process.
Note that $n=180, 232$ are the near-maximum size of complete graphs embeddable to the 16-Pegasus graph $P_{16}$ and 15-Zephyr graph $Z_{15}$, respectively.
Furthermore, we generated QUBO instances increasing $n$ from $n=190$ to $n=500$ with the same set of parameters $p$ and $s$ to examine scaling of embeddability.
The 
instances have been 
used to analyze 
running time of Algorithm~\ref{alg:extract_order} in Section~\ref{sec:proposed_method}.

\subsubsection{MKP Instances}

We use the OR-Library dataset of MKP instances~\cite{chu1998genetic}.
It consists of 30 randomly generated problem instances for every combination of the number of items $n \in \{100, 250, 500\}$ and the number of types of weights $m \in \{5,10,30\}$.
More specifically, there are 10 instances generated for each $m, n$ and the tightness parameter $\alpha \in \{0.25, 0.5, 0.75\} $ of inequality constraints in the following process:
Weight $w_{k,i}$ is independently sampled from $U(1,1000)$ for each $i=1,\cdots,n$ and $k=1,\cdots,m$.
Capacity $C_k$ is defined as $C_k = \alpha \sum_i w_{k,i}$.
Sample a real number $q\in [0,1]$ from a uniform distribution, then define value $v_i$ as $v_i = \sum_{i=1}^m w_{k,i}/m+500q_i$.

In addition to the above instances, we create knapsack problem instances (which we also call MKPs with $m=1$ for simplicity) from the instances of $m=5$ by ignoring constraints $\sum_{i}w_{k,i} x_i \le C_k 
$ except $k=1$.
We use instances with $m=1,5,10$ in our experiments.
Instances with $m=30$ are not used since we found that a valid partial order is not obtained, that is $|E|=0$, and thus linearization has no effects on those instances.

\subsection{Performance of Ising Machine with Minor-Embedding}

We evaluate the effect of the proposed method on the quality of minor-embedding and on the performance of the Ising machine combined with minor-embedding.
We conduct experiments using the synthetic QUBO matrices with $n=180$ and $232$ described in the previous section.

\subsubsection{Setting and Metrics}

We apply Algorithms~\ref{alg:extract_order}
and \ref{alg:qubo_linearization} to linearize the generated QUBO matrices.
We evaluate reduction rates of non-zero off-diagonal entries of QUBO matrices.
Then, we run minor-embedding search on each linearized QUBO instance setting target graphs as 
$P_{16}$ for $n=180$ and 
$Z_{15}$ for $n=232$.

We evaluate the quality of the obtained embedding in terms of the number of auxiliary variables and the maximum chain length.
All metrics are averaged over 10 QUBO instances.
We set clique embedding~\cite{Dwaveadvantage,Dwaveadvantage2} as a baseline, since the original QUBO instances have a complete graph structure.

After obtaining minor-embedding for linearized QUBO instances, we apply the Ising machine 10 times to each of the QUBO problems of three types:
the original problems, equivalent problems embedded by clique embedding and linearized problems embedded by the obtained minor-embedding.
We evaluate performance of the Ising machine in terms of energy of the best solution averaged over 10 instances.
Note that Amplify SDK and AE provide an interface that accepts a fully connected QUBO problem without minor-embedding.
Nevertheless, we intentionally input minor-embedded QUBO problems to AE,
since our aim in the experiment is to evaluate performance degradation of Ising machines due to minor-embedding with various hardware graphs in a unified way.

\begin{table}[t]
  \caption{Effect of Linearization on Minor-Embedding.}
  \label{tab:random_num_order}
  \centering
  \begin{tabular}{
  @{\hspace{2pt}}c@{\hspace{6pt}}r@{\hspace{3pt}}c@{\hspace{0pt}}r@{\hspace{4pt}}r@{\hspace{2pt}}r
  @{\hspace{7pt}} r@{\hspace{2pt}}r @{\hspace{7pt}} r@{\hspace{2pt}}r@{\hspace{2pt}}
  }
    \hline
    $H$ &&& 
    & \multicolumn{2}{c}{${\rm OD}(Q^G)$}
    & \multicolumn{2}{c}{\# of Aux. Var.} & \multicolumn{2}{c}{Chain Length} \\
    \hline \hline
    \multirow{7}{*}{$P_{16}$}
    & \multicolumn{2}{l}{clique} &  & 16110 & & 2820 & & 17 &\\
    &     & 0.1 &  & 16110.0 &  $(-0.0\%)$ & 2819.5 &  $(-0.0\%)$ & 17.0 & $(-0.0\%)$ \\
    &     & 0.2 &  & 14943.9 &  $(-7.2\%)$ & 2819.8 &  $(-0.0\%)$ & 17.0 & $(-0.0\%)$ \\
    & $p$ & 0.5 &  &  8139.0 & $(-49.5\%)$ & 2819.4 &  $(-0.0\%)$ & 17.0 & $(-0.0\%)$ \\
    &     & 1.0 &  & 4408.6 & $(-72.6\%)$ & 2818.0 &  $(-0.1\%)$ & 17.0 & $(-0.0\%)$ \\
    &     & 1.5 &  & 3086.7 & $(-80.8\%)$ & 1111.2 & $(-60.6\%)$ & 12.9 & $(-24.1\%)$ \\
    &     & 2.0 &  & 2326.0 & $(-85.6\%)$ &  790.9 & $(-72.0\%)$ &  9.7 & $(-42.9\%)$ \\
    \hline
    \multirow{7}{*}{$Z_{15}$} &
    \multicolumn{2}{c}{clique} &  & 26796 & & 3480 & & 16 &\\
    &     & 0.1 &  & 26796.0 & $(-0.0\%)$ & 3479.9 & $(-0.0\%)$ & 16.0 & $(-0.0\%)$ \\
    &     & 0.2 &  & 25051.1 & $(-6.5\%)$ & 3479.7 & $(-0.0\%)$ & 16.0 & $(-0.0\%)$ \\
    & $p$ & 0.5 &  & 13725.3 & $(-48.8\%)$ & 3480.0 & $(-0.0\%)$ & 16.0 & $(-0.0\%)$ \\
    &     & 1.0 &  &  7398.2 & $(-72.4\%)$ & 3478.7 & $(-0.04\%)$ & 16.0 & $(-0.0\%)$ \\
    &     & 1.5 &  &  5056.2 & $(-81.1\%)$ & 1510.9 & $(-56.6\%)$ & 14.1 & $(-11.9\%)$ \\
    &     & 2.0 &  &  3793.4 & $(-85.8\%)$ & 1057.3 & $(-69.6\%)$ & 10.6 & $(-33.8\%)$ \\
    \hline
  \end{tabular}
\end{table}

\begin{table}[t]
  \caption{Energy of Solutions of Synthetic QUBO Problems.}
  \label{tab:random_ising_machine}
  \centering
  \begin{tabular}{@{\hspace{1pt}}c@{\hspace{5pt}}c@{\hspace{0pt}}c@{\hspace{6pt}}rr@{\hspace{0pt}}r@{\hspace{3pt}}rr@{\hspace{3pt}}r@{\hspace{1pt}}}
    \hline
    &&& \multicolumn{1}{c}{Without} && \multicolumn{4}{c}{With Embedding}\\
    \cline{6-9}
    $H$ 
    & 
    & $p$ & \multicolumn{1}{c}{Embedding} & & 
    \multicolumn{1}{c}{Baseline} & \multicolumn{1}{c}{Diff.} &
    \multicolumn{1}{c}{Linearized} & \multicolumn{1}{c}{Diff.} 
    \\
    \hline \hline
    \multirow{5}{*}{$
    \begin{matrix}
        P_{16}    
    \end{matrix}
    $} &
     & 0.2 &  $ -8225.7$ &  & $  -8119.7$ & ($+106.0$) & $  -8148.8$ & ($+76.9$) \\ 
    && 0.5 &  $-30426.4$ &  & $ -30404.3$ &  ($+22.1$) & $ -30423.3$ &  ($+3.1$) \\ 
    && 1.0 &  $-74842.3$ &  & $ -74838.3$ &   ($+4.0$) & $ -74842.3$ &  ($+0.0$) \\ 
    && 1.5 & $-120531.9$ &  & $-120531.2$ &   ($+0.7$) & $-120531.9$ &  ($+0.0$) \\ 
    && 2.0 & $-165226.1$ &  & $-165224.9$ &   ($+1.2$) & $-165226.1$ &  ($+0.0$) \\ 
    \hline
    \multirow{5}{*}{$
    \begin{matrix}
        Z_{15}
    \end{matrix}
    $} &
     & 0.2 & $ -13604.3$ &  & $ -13188.2$ &  ($+416.1$) & $ -13256.0$ & ($+348.3$) \\ 
    && 0.5 & $ -50108.2$ &  & $ -49913.4$ &  ($+194.8$) & $ -50088.9$ &  ($+19.3$) \\ 
    && 1.0 & $-125602.6$ &  & $-125505.1$ &   ($+97.5$) & $-125602.5$ &   ($+0.1$) \\ 
    && 1.5 & $-196882.1$ &  & $-196872.1$ &   ($+10.0$) & $-196881.5$ &   ($+0.6$) \\ 
    && 2.0 & $-272778.4$ &  & $-272763.6$ &   ($+14.8$) & $-272778.4$ &   ($+0.0$) \\ 
    \hline
  \end{tabular}
\end{table}

\subsubsection{Results}

Table~\ref{tab:random_num_order} shows results on the number of non-zero off-diagonal entries of QUBO matrices and on the quality of the minor-embedding.
The column label $H$ denotes a target hardware graph. 
The ``clique'' row for each target graph shows the results of the baseline, that is, clique embedding.
${\rm OD}(Q^G)$ denotes the number of non-zero off-diagonal entries of linearized QUBO matrices and of total off-diagonal entries on the ``clique'' rows.
The values of ${\rm OD}(Q^G)$ are less for larger $p$, since more pairs of variables are ordered.
Note that the reduction rates of 
non-zero off-diagonal entries 
depend on $p$ and not on $n$.
For $p=1.5$ and $p=2.0$, we observe that the numbers of auxiliary variables and maximum chain lengths of the obtained minor-embeddings are significantly reduced.
For the other cases, such drastic changes are not observed.

We show results on the performance of the Ising machine in Table~\ref{tab:random_ising_machine}.
We exclude the results of $p=0.1$, since 
no ordered pairs are found as in Table~\ref{tab:random_num_order} and thus
linearization does not change the QUBO problems.
On the original QUBO problems without minor-embedding, the Ising machine produced solutions with the same energy on all 10 executions for each instance.
This indicates that the generated QUBO problems are relatively easy to solve and that the Ising machine always outputs optimal solutions on these problems.
We also applied the Ising machine to the linearized QUBO problems without minor-embedding and observed that the result (omitted from Table~\ref{tab:random_ising_machine}) is exactly the same as that of the original problems. 
In Table~\ref{tab:random_ising_machine},
the energy of solutions of QUBO problems embedded with clique embedding shown in the 
``Baseline'' 
column 
is larger than that of the 
problems without minor-embedding.
This implies that minor-embedding degrades the performance of the Ising machine even on those easy QUBO problems.
The energy of solutions of linearized 
problems shown in the ``Linearized'' column 
is smaller than 
the baselines for all cases. 
This implies that the proposed method improves performance of the Ising machine for minor-embedded QUBO problems.
This is an interesting phenomenon, considering that the quality of minor-embedding is not changed by linearization for $p\le 1$.
Specifically, the results suggest that there are reasons for the improvement of performance other than improving the quality of minor-embedding.
This is further supported by additional experiments on clique embedding with linearization in Appendix~\ref{app:clique_linear}, where similar performance improvement via linearization is observed even for a fixed embedding.
One possible reason for the improvement is that the energy landscapes of the minor-embedded QUBO problems are simplified by linearization.

\subsection{Embeddability of Large Problems}

We evaluate success rates of 
minor-embedding of linearized QUBO problems increasing the problem size $n$.
We conduct experiments using synthetic QUBO matrices increasing $n$ on the Pegasus graph $P_{16}$ and Zephyr graph $Z_{15}$ of the fixed size.

\subsubsection{Setting and Metrics}

We start increasing the problem size $n$ from $n=180$ for $P_{16}$ and $n=232$ for $Z_{15}$ with step size 10.
Note that smaller instances can be trivially embedded via clique embedding.
For each combination of $n$ and $p\in \{0.2, 0.5, 1.0, 1.5, 2.0\}$, we apply Algorithms~\ref{alg:extract_order} and \ref{alg:qubo_linearization} to the 10 generated QUBO instances to obtain linearized QUBO instances.
For each target graph $P_{16}$ and $Z_{15}$, 
we run minor-embedding search on those linearized QUBO instances and count the number of instances for which a valid minor-embedding is found.
We remark that since 
only graph structures of QUBO matrices matter in this experiment
and
the density of linearized synthetic QUBO problems only depends on $p$, 
the linearized synthetic QUBO instances can be viewed as random graphs with almost constant density for a fixed $p$.

\begin{figure}[t]
    \centering
    \subfloat[Pegasus graph $P_{16}$\label{fig:embeddability_pegasus}]{
         \includegraphics[width=0.47\textwidth]{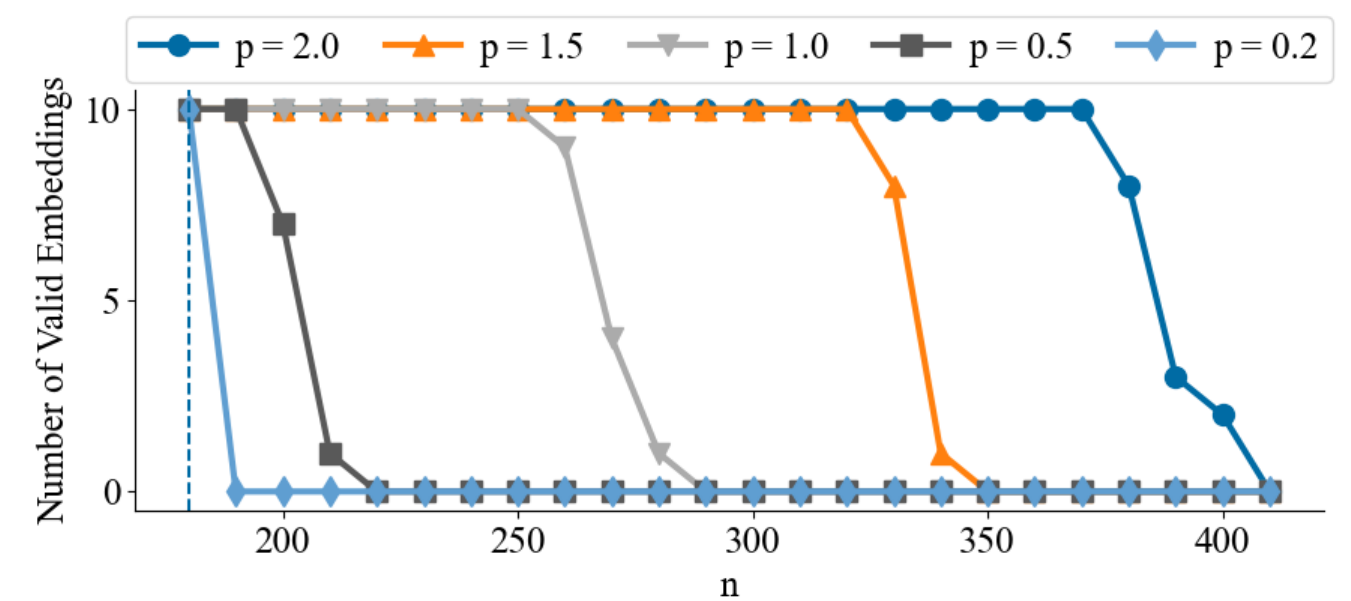}
    }
    \hfil
    \subfloat[Zephyr graph $Z_{15}$\label{fig:embeddability_zephyr}]{
         \includegraphics[width=0.47\textwidth]{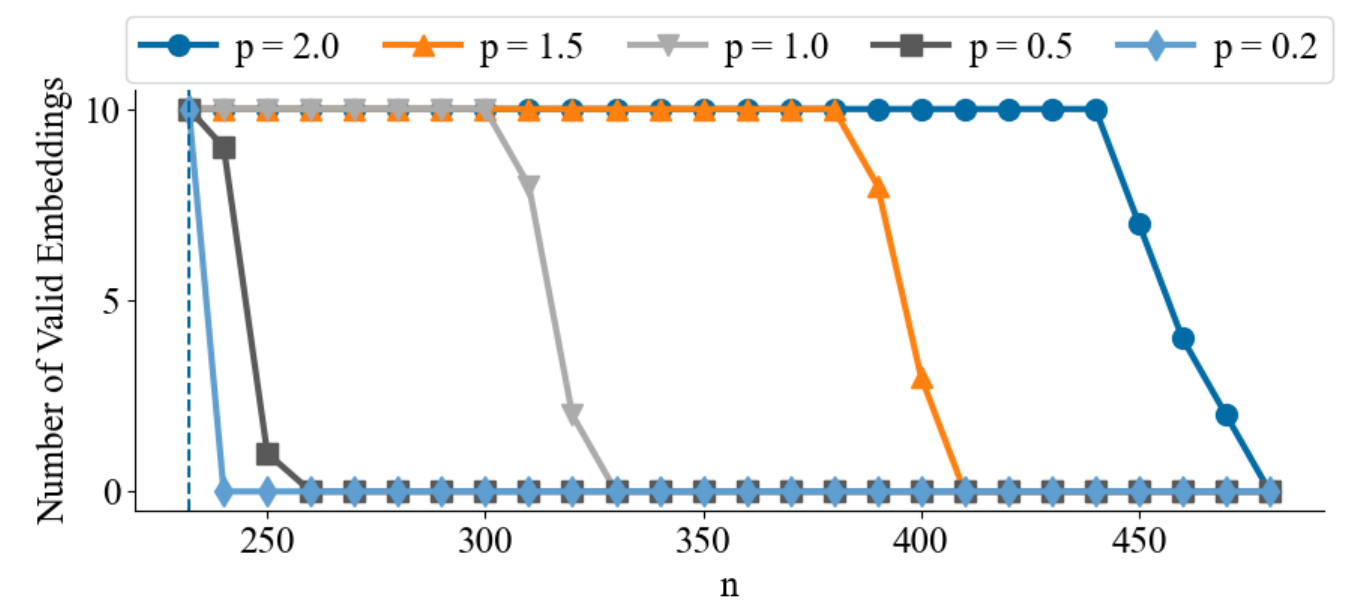}
    }
     \caption{Success counts of finding minor-embedding of linearized QUBO problems on fixed hardware graphs with increasing problem size.}
     \label{fig:embeddability_results}
\end{figure}

\subsubsection{Results}

We plot the number of instances for which valid minor-embedding is found in Fig.~\ref{fig:embeddability_results}.
For $p \ge 0.5$, the QUBO instances with larger $n$ can be minor-embedded 
on both settings of $P_{16}$ and $Z_{15}$.
Note that no tested instances can be embedded without linearization, since the clique embedding gives a near-optimal upper bound of the size of dense QUBO problems that can be embedded.
We observe that larger $p$ results in a higher upper limit of the size of embeddable QUBO problems.
For example, all 10 instances of size $n=360$ are embedded into $P_{16}$ when $p=2.0$, while no valid embedding is found for instances of size $n=220$ when $p=0.5$. 
This is because more pairs of variables are ordered in Algorithm~\ref{alg:extract_order} for larger $p$, and thus the QUBO matrices are made sparser by linearization.

\subsection{Performance of Ising Machine on MKPs}

We evaluate the effect of the proposed method on performance of the Ising machine on MKPs.
Note that excellent algorithms to solve MKPs heuristically or exactly have been well developed through decades of research.
Our aim in this experiment is not to beat them, but to show to what extent our proposed method enables one of the state-of-the-art Ising machines to obtain near-optimal solutions under typical setting where the method can be applied.
The experiment is conducted on MKP instances described in the previous section.

\subsubsection{Setting and Metrics}

We solve MKPs by encoding them to QUBO problems as in Eq.~(\ref{eq:mkp_qubo}) with and without linearization with respect to the partial order given by Eq.~(\ref{eq:mkp_order}) and applying the Ising machine.
The whole objective function of an MKP in QUBO form with the proposed method is explicitly given as follows:
\begin{align}\label{eq:linearized_mkp}
    H_{\rm lin} &= -\sum v_i x_i 
    \notag \\
    & +\lambda \sum_{k=1}^m \left(H^{(k)}_{\rm ineq} + \sum_{(i,j)\in E} 2w_{k,i}w_{k,j}(x_i - x_i x_j) \right).
\end{align}
A solution is feasible if $\sum_{i} w_{k,i}x_i \le C_k$ holds for all $k=1,\cdots,m$.
Note that a solution can be feasible even when $H^{(k)}_{\rm ineq}>0$, in which case the auxiliary variables for $H^{(k)}_{\rm ineq}$ are not completely optimized.
Note also that there are variations in the way to program the penalty terms with Amplify SDK~\cite{amplify} (see Appendix~\ref{app:experiment_setup} for details).

We execute the Ising machine 50 times to sample 50 solutions on every MKP instance with and without the proposed method applied.
Quality of solutions of an MKP instance is evaluated in terms of the number of feasible solutions and optimality gaps of solutions.
The average or best optimality gap of solutions are defined by
\begin{align}
    {\rm Optimality \ Gap} = \frac{S_{\rm best} - S_{\rm Ising}}{S_{\rm best}} \times 100,
\end{align}
where $S_{\rm best}$ is the best known 
score $\sum_i v_i x_i$ obtained using other solvers and $S_{\rm Ising}$ is the average or best score over feasible solutions obtained by the Ising machine.
In this experiment, $S_{\rm best}$ is obtained by running Gurobi Optimizer~\cite{gurobi} of version 9.1.2 for 10 seconds.
The obtained $S_{\rm best}$ for small-scale instances such as $n=100$ or $m=1$ is proved to be optimal by the solver.
All metrics are reported by averaging over 10 instances for each combination of $n, m, \alpha$.

On the penalty coefficient $\lambda$, we found that a sufficient number of feasible solutions is obtained with $\lambda=1$ 
in a preliminary experiment.
Thus, we adopt $\lambda=1$ in this experiment.
Refer to Appendix~\ref{app:experiment_tuning} for results based on baselines tuned with respect to $\lambda$.

\begin{table}[t]
  \caption{Optimality Gap on MKP for Baseline and Proposed Method.}
  \label{tab:mkp_best_value}
  \centering
  \begin{tabular}{
  @{\hspace{1pt}}c@{\hspace{2pt}}c@{\hspace{3pt}}c @{\hspace{5pt}}r@{\hspace{5pt}}
  r@{\hspace{7pt}}r@{\hspace{6pt}}r@{\hspace{6pt}}
  r@{\hspace{7pt}}r@{\hspace{7pt}}r@{\hspace{8pt}} r @{\hspace{1pt}} r
  @{\hspace{3pt}}}
    \hline
    & Instance & & & \multicolumn{2}{c}{Baseline} & & \multicolumn{4}{c}{Linearized}
    \\
    \cline{1-3}
    \cline{5-6}
    \cline{8-12}
    $m$ & $n$ & $\alpha$ 
    & & \multicolumn{1}{c}{\#FS} & Gap 
    & & \multicolumn{1}{c}{$|E|$} 
    & \multicolumn{1}{c}{\#FS} & Gap  & \multicolumn{2}{c}{\hl{RR (\%)}} 
    \\ 
    \hline \hline
 1 & 100 & 0.25 && 50.0 &   7.99 &&  2013.6 &50.0 &   0.01 & \hl{$-99.8 \pm $}& \hl{$ 0.6$} \\
   &     & 0.50 && 50.0 &   8.11 &&  2033.0 &49.9 &   0.03 & \hl{$-99.7 \pm $}& \hl{$ 0.6$} \\
   &     & 0.75 && 50.0 &   5.79 &&  1973.0 &50.0 &   0.19 & \hl{$-96.5 \pm $}& \hl{$ 3.2$} \\
 1 & 250 & 0.25 && 50.0 &  14.98 && 12529.9 &49.8 &   0.29 & \hl{$-98.1 \pm $}& \hl{$ 1.1$} \\
   &     & 0.50 && 50.0 &  11.95 && 12731.2 &49.4 &   0.10 & \hl{$-99.1 \pm $}& \hl{$ 1.0$} \\
   &     & 0.75 && 50.0 &   8.28 && 12552.8 &50.0 &   0.10 & \hl{$-98.8 \pm $}& \hl{$ 0.5$} \\
 1 & 500 & 0.25 && 49.8 &  18.07 && 51922.9 &48.7 &   0.31 & \hl{$-98.3 \pm $}& \hl{$ 1.1$} \\
   &     & 0.50 && 50.0 &  14.96 && 51008.0 &49.2 &   0.30 & \hl{$-98.0 \pm $}& \hl{$ 0.7$} \\
   &     & 0.75 && 50.0 &   9.91 && 50326.0 &50.0 &   0.72 & \hl{$-92.6 \pm $}& \hl{$ 4.2$} \\
 5 & 100 & 0.25 && 50.0 &  10.80 &&    23.8 &50.0 &   9.04 & \hl{$-15.7 \pm $}& \hl{$ 13.1$} \\
   &     & 0.50 && 50.0 &   9.19 &&    29.5 &50.0 &   8.04 & \hl{$-12.4 \pm $}& \hl{$ 7.0$} \\
   &     & 0.75 && 50.0 &  12.53 &&    26.7 &50.0 &  12.48 & \hl{$+  0.4 \pm$}& \hl{$ 14.7$} \\
 5 & 250 & 0.25 && 50.0 &  14.50 &&   156.7 &50.0 &  11.49 & \hl{$-20.2 \pm $}& \hl{$ 7.1$} \\
   &     & 0.50 && 50.0 &  11.26 &&   143.4 &50.0 &   8.60 & \hl{$-23.5 \pm $}& \hl{$ 3.8$} \\
   &     & 0.75 && 50.0 &  17.23 &&   148.4 &50.0 &  15.28 & \hl{$-10.2 \pm $}& \hl{$ 17.0$} \\
 5 & 500 & 0.25 && 50.0 &  17.28 &&   665.7 &49.8 &  12.06 & \hl{$-30.1 \pm $}& \hl{$ 4.0$} \\
   &     & 0.50 && 49.9 &  13.07 &&   610.0 &50.0 &   9.62 & \hl{$-26.4 \pm $}& \hl{$ 2.8$} \\
   &     & 0.75 && 50.0 &  17.71 &&   643.3 &50.0 &  14.23 & \hl{$-19.1 \pm $}& \hl{$ 9.5$} \\
10 & 100 & 0.25 && 50.0 &  16.54 &&     0.8 &49.9 &  16.00 & \hl{$ -3.1 \pm $}& \hl{$ 10.1$} \\
   &     & 0.50 && 49.9 &  13.79 &&     1.2 &50.0 &  14.44 & \hl{$+  5.1 \pm $}& \hl{$ 7.9$} \\
   &     & 0.75 && 50.0 &  22.49 &&     0.6 &50.0 &  22.89 & \hl{$+  2.5 \pm $}& \hl{$ 12.0$} \\
10 & 250 & 0.25 && 49.4 &  17.98 &&     5.2 &49.6 &  18.03 & \hl{$+  0.5 \pm $}& \hl{$ 5.4$} \\
   &     & 0.50 && 50.0 &  14.72 &&     4.0 &50.0 &  14.82 & \hl{$+  0.7 \pm $}& \hl{$ 5.0$} \\
   &     & 0.75 && 50.0 &  26.47 &&     2.9 &50.0 &  26.28 & \hl{$ -0.7 \pm $}& \hl{$ 7.0$} \\
10 & 500 & 0.25 && 49.7 &  18.58 &&    14.2 &49.2 &  18.27 & \hl{$ -1.6 \pm $}& \hl{$ 2.9$} \\
   &     & 0.50 && 50.0 &  15.67 &&    15.0 &49.9 &  15.61 & \hl{$ -0.3 \pm $}& \hl{$ 4.6$} \\
   &     & 0.75 && 50.0 &  27.10 &&    14.5 &50.0 &  26.67 & \hl{$ -0.4 \pm $}& \hl{$ 13.7$}\\
    \hline
  \end{tabular}
\end{table}

\subsubsection{Results}

Results are summarized in Table~\ref{tab:mkp_best_value}. 
\#FS stands for the number of feasible solutions.
Gap denotes the best optimality gap of solutions.
The results of the averaged optimality gap are omitted due to space limitations and are similar to those of the best optimality gap (see Appendix~\ref{app:full_mkp} for the full results).
$|E|$ denotes the number of ordered pairs of variables.
These metrics are averaged over 10 instances.
\hl{Mean reduction rate (RR) of the optimality gap, i.e., how much the gap is reduced by linearization, is reported with a standard deviation over 10 instances.}
$|E|$ is large when $n$ is large and $m$ is small.
This is a natural consequence, since 
the number of candidate pairs of variables to be ordered increases for increasing $n$ and conversely, larger $m$ leads to a tighter condition for a pair of variables to be ordered following Eq.~(\ref{eq:mkp_order}).
Meanwhile, Eq.~(\ref{eq:mkp_order}) depends only on $v_i$ and $w_{k,i}$ and not on $C_k$, and thus expectation values of $|E|$ do not depend on $\alpha$.
The number of feasible solutions tends to be slightly decreased by applying the proposed method.
We consider that this is because the Ising machine outputs solutions violating constraints to avoid the auxiliary penalties introduced by the proposed method.
The increase in the number of infeasible solutions is negligibly small and thus is not a practical issue.
Regarding the optimality gap, the proposed method substantially decreases the gap 
on instances with large $|E|$.
On the instances of $m=10$, 
we do not observe consistent improvement in performance due to small $|E|$.
We observe that the impact of the proposed method tends to be greater for smaller $\alpha$.
This suggests that the proposed method is particularly effective on problems with relatively tight inequality constraints.
Although the reason for this phenomenon is not clear, we consider it might be related to the scale of coefficients of the QUBO form Eq.~(\ref{eq:mkp_qubo}).
Since the coefficients of auxiliary variables in Eq.~(\ref{eq:inequality_penalty}) scale proportionally to knapsack capacity, large $\alpha$ leads to large coefficients of the objective function in Eq.~(\ref{eq:mkp_qubo}).
On the other hand, the proposed method also increases coefficients of linear terms through linearization.
Assuming that the distribution and scale of coefficients in the QUBO objective function influences performance of Ising machines, the difference in impact of the proposed method in $\alpha$ is expected to be explained in terms of the coefficients distribution.
Dependence of behavior of the baseline (without linearization) on $\alpha$ must be analyzed in the first place to verify the hypothesis.
Since such a study does not exist 
and is beyond the scope of this paper, we leave the precise analysis on dependence of performance on tightness of inequality constraints as future work.


\section{Related Work and Discussion}\label{sec:discussion}

We have proposed a method for deriving QUBO matrices suitable for Ising machines by introducing auxiliary penalties that preserve the optimum of given QUBO problems. 
No previous studies have taken such an approach so far.

\hl{
Numerous preprocessing techniques have been proposed for QUBO or equivalently max-cut problems~\mbox{\cite{boros2006preprocessing,lewis2017quadratic,rehfeldt2023faster}} that aim to reduce solving time on exact solvers or improve solution quality on meta-heuristic solvers.
All of these methods reduce the number of variables in a problem and can be applied to Ising machines.
Our approach in this work is different in that we reduce the connectivity in a problem instead of the number of variables, for utilizing Ising machines.
Note that the existing reduction methods can be combined with our method to further simplify problems.
For more details on the existing preprocessing methods, see Appendix~\mbox{\ref{app:review_preprocessing}}.
}

Several studies consider ordinal conditions in knapsack problems~\cite{you2007pegging} or in representation of integer variables via binary variables~\cite{tamura2009compiling,chancellor2019domain}.
These studies impose the ordinal conditions on variables as hard constraints given by problem definition.
Our approach is different in that ordinal conditions are derived as auxiliary penalties to improve applicability of Ising machines.
Whether linearization induces feasible solutions even on problems with hard ordinal constraints is an interesting question that will be explored in future work.

The proposed method has various directions of extension and application.
We explain some possible directions below to illustrate the potential impact of our approach utilizing auxiliary penalties.
Exploration of each 
direction goes beyond the scope of this paper, i.e., verifying effectiveness of linearization on improving minor-embedding and Ising machine performance, and thus is left as future work.

One straightforward extension is to consider other implications such as $x_i=1 \Rightarrow x_j =0$ 
than $x_i =1 \Rightarrow x_j = 1$.
Note that $x_i=1 \Rightarrow x_j =0$ is equivalent to $x_i x_j=0$, which yields a corresponding penalty term in a QUBO form.
When an auxiliary constraint $x_i =1 \Rightarrow x_j = 1$ can be imposed without changing the optimum of the QUBO problem, the corresponding quadratic term $x_i x_j$ with a \emph{negative} coefficient can be removed by adding the penalty $x_i x_j$ with a suitable coefficient.
We expect the process has similar effects on the sparsity and energy landscape of QUBO problems.
Exploring practical application of such an extension is an important direction of future studies.

The method can also be extended to Ising machines that can directly handle inequality constraints 
without encoding them to a QUBO objective function as in Eq.~(\ref{eq:inequality_penalty}) and Eq.~(\ref{eq:kp_qubo}).
Fujitsu Digital Annealer of the 3rd generation~\cite{nakayama2021description} is one such Ising machine.
As we have seen in the case of MKPs, quadratic terms to be linearized often come from expanded polynomials of penalties of inequality constraints.
In such a case, the proposed method might not be applied as-is for Ising machines that directly handle inequality constraints.
Meanwhile, the method can be extended to the situation by considering an auxiliary constraint $x_i=1 \Rightarrow x_j = 1$ as an inequality constraint $x_i \le x_j$ and encoding it to the Ising machines, instead of converting it to a penalty term.
It is an interesting direction of investigation to evaluate the performance of Ising machines with the extended method.

A possible application of the proposed method is efficient processing of QUBO matrices.
When dealing with 
numerous variables in a QUBO problem, dense interaction of them results in large computational overhead for construction of the QUBO matrix both in time and space.
Symmetric variables admit a total order as in Example~\ref{ex:symmetric_order} and this nice property might help efficient construction of the linearized QUBO matrix, possibly skipping calculation of dense interaction.
Upon emergence of large-scale Ising machines, construction of QUBO matrices is considered to become a bottleneck of the whole process of using Ising machines.
Thus, this direction of application of the proposed method seems a promising approach to tackle this bottleneck.

\hl{
Lastly, we discuss the limitations of the proposed method.
We consider that the applicability of the proposed method depends on the problem class and the parameters in the problem.
For further experimental exploration of the applicability, refer to Appendix~\mbox{\ref{app:experiments_mqlib}}, where we evaluate the proposed method on various domains such as max-cut and graph coloring problems.
We also discuss the extension of Theorem~\mbox{\ref{thm:sufficient_condition_valid_order}} to integer linear programming in Appendix~\mbox{\ref{app:sufficient_condition}}.
Moreover, as we have seen in the experiments on the MKP (Section~\mbox{\ref{sec:experiments}}), whether or not a (non-empty) valid partial order can be obtained heavily depends on the number of constraints.
To deal with problems with many constraints, we should first revisit the baseline, i.e., the naive application of Ising machines.
Current Ising machines typically suffer from being unable to solve problems with many constraints because of the difficulty to optimize multiple penalty terms, which can indeed be seen in our MKP experiments; the optimality gap obtained by the Ising machine becomes worse for large $m$ (the number of constraints).
Therefore, we consider that the limitation of the proposed method on such problems could be approached after further developments addressing this fundamental performance issue.
}

\section{Conclusion}\label{sec:conclusion}

We proposed linearization of quadratic unconstrained binary optimization (QUBO) problems using variable posets to improve applicability and performance of Ising machines.
Linearization eliminates quadratic terms in the objective function by introducing an auxiliary penalty of ordinal conditions with suitable coefficients, thereby simplifying the energy landscape of the QUBO problem and enhancing minor-embedding.
We developed general and practical algorithms to extract a valid partial order and demonstrated their computational complexity.
Through experiments on synthetic QUBO problems and MKPs, we validated the effects of the proposed method.
The results show that linearization mitigates performance degradation of Ising machines due to minor-embedding (possibly not by improving quality of minor-embedding but by simplifying energy landscape), enables minor-embedding of larger QUBO instances, and substantially reduces optimality gaps on practical problems.

\bibliographystyle{IEEEtran}
\bibliography{ref}

\begin{IEEEbiography}[{\includegraphics[width=1in,height=1.25in,clip,keepaspectratio]{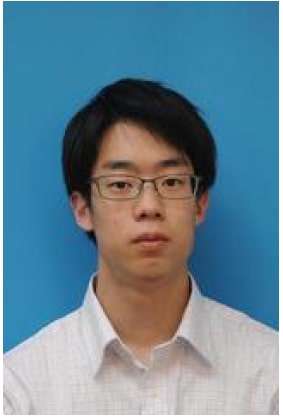}}]{Kentaro Ohno}
received the B.~S. and M.~S. degrees in mathematics from the University of Tokyo in 2017 and 2019, respectively. He joined NTT in 2019. He is currently studying combinatorial optimization using Ising machines.
\end{IEEEbiography}

\begin{IEEEbiography}[{\includegraphics[width=1in,height=1.25in,clip,keepaspectratio]{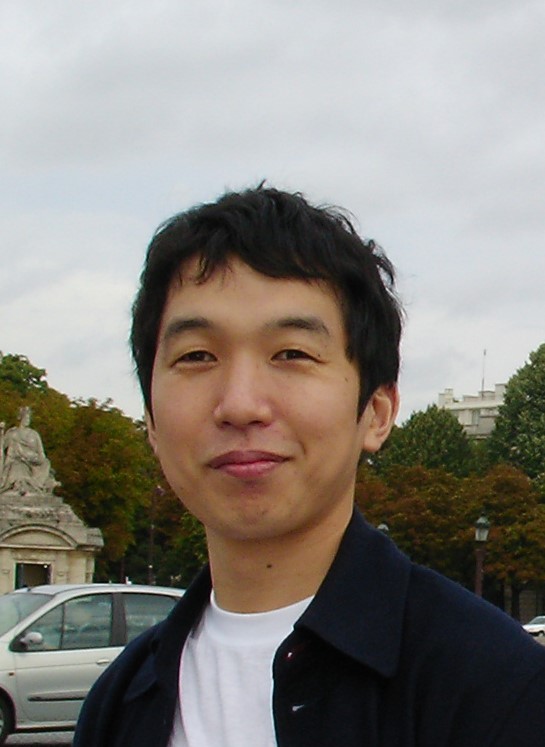}}]{Nozomu Togawa}
received the B.~Eng., M.~Eng., and Dr.~Eng. degrees from Waseda University in 1992, 1994, and 1997, respectively, all in electrical engineering. He is presently a professor in the Department of Computer Science and Communications Engineering, Waseda University. His research interests are quantum computation and integrated system design. He is a member of ACM, IEICE, and IPSJ.
\end{IEEEbiography}

\appendices

\section{\hl{
Review on Existing Preprocessing Techniques
}}\label{app:review_preprocessing}

\hl{
To clarify the novelty of the proposed method, 
we briefly review existing preprocessing on QUBO problems.
}

\hl{
As noted in Section~\mbox{\ref{sec:discussion}} in the main text, 
existing preprocessing techniques for QUBO (or max-cut)~\mbox{\cite{boros2006preprocessing,lewis2017quadratic,rehfeldt2023faster}} simplify problems by reducing the number of variables and are applicable to Ising machines.
A naive approach for reduction is variable fixation based on the \emph{first order derivative}~\mbox{\cite{boros2006preprocessing}} of the objective function, which is widely used in the existing methods~\mbox{\cite{lewis2017quadratic,rehfeldt2023faster}}.
A stronger preprocessing technique is based on \emph{roof-duality}~\mbox{\cite{hammer1984roof}}, which provides a lower (or upper) bound of the objective function called the roof dual.
For theoretical and technical details on the first order derivative and roof-duality, we refer to the review paper~\mbox{\cite{boros2002pseudo}} and references therein.
An important fact is that the computation of the roof dual results in a representation (precisely, a posiform) of the objective function with a nice property: if a linear term has a positive coefficient, the corresponding variable 
must take 0 at every minimum.
The roof dual can be computed efficiently involving complexity of $O(n^3)$ for the problem size $n$.
Therefore, the roof-duality has been exploited to efficiently reduce the problem size by fixing the value of variables\footnote{Public implementation can be found in D-Wave Ocean SDK: https://github.com/dwavesystems/dwave-ocean-sdk}.
Note that the existing reduction methods can naturally be combined with our method by successively applying the preprocessings.
We also note that the roof-duality could not reduce the number of variables on all problem instances used in the main text.
This fact highlights the difference in applicability between the existing reduction methods and our proposed method.
}

\hl{
Boros et al.~\mbox{\cite{boros2006preprocessing}} integrated several preprocessing techniques including the use of the roof-duality and first/second order derivatives.
Their method is based on ideas of probing and consensus, resulting in an efficient and extensive reduction framework.
Lewis and Glover~\mbox{\cite{lewis2017quadratic}} proposed a variable fixation method based on basic four criteria including the first order derivative.
They showed that the reduction improves both solution quality and time to solution when applied with meta--heuristic solvers.
Rehfeldt et al.~\mbox{\cite{rehfeldt2023faster}} employed a combination of five criteria for variable reduction to enhance the performance of a branch-and-cut solver.
They verified that the combined use of various criteria is effective on instances of certain domains.
While all of these methods aim to reduce the number of variables, our approach is to reduce the connectivity of the problem to facilitate Ising machines.
}


\section{Proofs of Theoretical Results}

\subsection{Proof of Proposition~\ref{prop:aux_penalty}}\label{app:aux_penalty}
\begin{proposition*}[= Proposition~\ref{prop:aux_penalty}]
    Let $G$ be a partial order of variables valid with respect to minimization of a function $\phi: B_n \to \mathbb R$.
    Then, for any non-negative function $c: E\times B_n \to \mathbb R$, we have
    \begin{align}
        \min_{x\in B_n} \phi(x) = \min_{x\in B_n} \left( \phi(x) + \sum_{e\in E} c(e, x) (x_i - x_i x_j) \right).
    \end{align}
    Moreover, if $x^* \in B_n$ attains the minimum of the right hand side, then it attains the minimum of the left hand side.
\end{proposition*}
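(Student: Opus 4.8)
The plan is to exploit the fact that the penalty term is non-negative everywhere and vanishes precisely on $B_n^G$, so adding it can only raise values outside $B_n^G$ while leaving values on $B_n^G$ untouched, and then combine this with the validity hypothesis which says the minimum of $\phi$ is already attained on $B_n^G$.

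First I would write $\psi(x) \coloneqq \sum_{e\in E} c(e,x)(x_i - x_i x_j)$ and observe that each summand is non-negative: $c(e,x)\ge 0$ by hypothesis, and $x_i - x_i x_j \in \{0,1\}$ for binary $x$, equal to $1$ iff $x_i=1, x_j=0$. Hence $\psi(x)\ge 0$ for all $x\in B_n$, with $\psi(x)=0$ whenever $x\in B_n^G$ (since then for every edge $(x_i,x_j)\in E$ we have $x_i=1\Rightarrow x_j=1$, forcing $x_i - x_i x_j = 0$). Therefore $\phi(x)+\psi(x)\ge \phi(x)$ for all $x$, which immediately gives $\min_{x\in B_n}(\phi(x)+\psi(x)) \ge \min_{x\in B_n}\phi(x)$. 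For the reverse inequality, let $x^\circ \in B_n^G$ attain $\min_{x\in B_n^G}\phi(x)$, which by validity of $G$ equals $\min_{x\in B_n}\phi(x)$; then $\phi(x^\circ)+\psi(x^\circ) = \phi(x^\circ) = \min_{x\in B_n}\phi(x)$, so $\min_{x\in B_n}(\phi(x)+\psi(x)) \le \min_{x\in B_n}\phi(x)$. Combining the two inequalities yields the claimed equality.

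For the ``moreover'' part, suppose $x^*$ attains the minimum of the right-hand side, i.e. $\phi(x^*)+\psi(x^*) = \min_{x\in B_n}(\phi(x)+\psi(x)) = \min_{x\in B_n}\phi(x)$. Since $\psi(x^*)\ge 0$, we get $\phi(x^*) \le \min_{x\in B_n}\phi(x)$, which forces $\phi(x^*) = \min_{x\in B_n}\phi(x)$ (and incidentally $\psi(x^*)=0$). Hence $x^*$ attains the minimum of the left-hand side.

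There is no real obstacle here; the argument is a two-sided inequality chase. The only point requiring a little care is making explicit that validity of $G$ supplies a minimizer lying in $B_n^G$, which is exactly what licenses the reverse inequality — everything else is the elementary non-negativity bound.
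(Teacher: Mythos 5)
Your proof is correct and follows essentially the same route as the paper's: both rest on the penalty being non-negative and vanishing on $B_n^G$, with validity of $G$ supplying a minimizer of $\phi$ inside $B_n^G$ to get the reverse inequality, and the same non-negativity argument for the ``moreover'' part. No issues.
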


\begin{proof}
    We define $\psi(x) \coloneqq \phi(x) + \sum_{e\in E} c(e, x) (x_i - x_i x_j)$.
    Then $\psi(x) \ge \phi(x)$ always holds.
    Since $G$ is valid with respect to minimization of $\phi$, there exists $x^* \in B_n^G$ such that $\phi(x^*) = \min_{x\in B_n} \phi(x)$.
    We have an equality $\phi(x^*) = \psi(x^*)$ since $\phi(x) = \psi(x)$ for every $x \in B_n^G$.
    For any $x\in B_n$, we have $\psi(x) \ge \phi(x) \ge \phi(x^*) = \psi(x^*)$, so $x^*$ minimizes $\psi$.
    Therefore, Eq.~(\ref{eq:aux_penalty}) is proved.
    Conversely, if $x^{*}$ minimizes $\psi$, then
    $\psi(x^{*}) \ge \phi(x^*)$ and $\psi(x^{*}) = 
    \min_{x\in B_n} \psi(x) = \min_{x\in B_n} \phi(x)$ holds.
    Thus, we have $\phi(x^*) \le \min_{x\in B_n} \phi(x)$, and so $x^{*}$ minimizes $\phi$.    
\end{proof}

\subsection{Proof of Theorem~\ref{thm:sufficient_condition_valid_order}}\label{app:sufficient_condition}

\begin{theorem*}[= Theorem~\ref{thm:sufficient_condition_valid_order}]
    Let $G=(V,E)$ be a partial order of $n$ variables and
    $\phi(x)=x^\top Q x$ be a quadratic function with $Q\in \bR^{n\times n}$.
    For $i,j=1,2,\cdots,n$, we define
    \begin{align}
        a_{i,j} \coloneqq \begin{cases}
            Q_{i,j} + Q_{j,i} & i \ne j \\
            Q_{i,i} & i=j.
        \end{cases}
    \end{align}
    That is, $a_{i,j}$ is the coefficient of $x_i x_j$ (or $x_i$ if $i=j$) in $\phi(x)$.
    If an inequality
    \begin{align}
        S_{i,j} \coloneqq \sum_{\substack{k=1 \\ k\ne i,j}}^n \max \{ 0, a_{j,k} - a_{i,k} \} + a_{j,j} - a_{i,i} \le 0
    \end{align}
    holds for every directed edge $(x_i,x_j)\in E$, then $G$ is valid with respect to minimization of $\phi$.  
\end{theorem*}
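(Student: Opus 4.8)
The plan is to prove validity of $G$ edge by edge, building up a chain of transformations of an optimal solution. The key observation, already hinted at after the theorem statement, is that the condition $S_{i,j}\le 0$ guarantees the following local exchange property: for any fixed assignment of the variables other than $x_i$ and $x_j$, setting $(x_i,x_j)=(0,1)$ yields a value of $\phi$ no larger than setting $(x_i,x_j)=(1,0)$. So first I would isolate the dependence of $\phi(x)=\sum_{k,l} Q_{k,l}x_k x_l = \sum_{k\le l} a_{k,l} x_k x_l + \sum_k a_{k,k}x_k$ (using the symmetrized coefficients $a$) on the pair $(x_i,x_j)$ with all other coordinates frozen. Writing $x'$ for the vector agreeing with $x$ except possibly at coordinates $i,j$, I would compute
\begin{align*}
\phi(x'\text{ with }(x_i,x_j)=(1,0)) - \phi(x'\text{ with }(x_i,x_j)=(0,1)) = a_{i,i} - a_{j,j} + \sum_{k\ne i,j} (a_{i,k}-a_{j,k}) x_k.
\end{align*}
Since each $x_k\in\{0,1\}$, the sum $\sum_{k\ne i,j}(a_{i,k}-a_{j,k})x_k \ge \sum_{k\ne i,j}\min\{0, a_{i,k}-a_{j,k}\} = -\sum_{k\ne i,j}\max\{0, a_{j,k}-a_{i,k}\}$, so the above difference is at least $a_{i,i}-a_{j,j} - \sum_{k\ne i,j}\max\{0,a_{j,k}-a_{i,k}\} = -S_{i,j} \ge 0$. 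This establishes the exchange property for the single edge $(x_i,x_j)$.

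Next I would use this to upgrade an arbitrary optimal solution to one lying in $B_n^G$. Take $x^*$ achieving $\min_{x\in B_n}\phi(x)$. If $x^*\notin B_n^G$, there is an edge $(x_i,x_j)\in E$ with $x^*_i=1, x^*_j=0$. Flip these two bits to get $\bar x$ with $\bar x_i=0,\bar x_j=1$ and all else unchanged; by the exchange property $\phi(\bar x)\le \phi(x^*)$, so $\bar x$ is also optimal. The subtlety — and the main obstacle — is showing this process terminates, i.e. that repeatedly performing such flips cannot cycle forever. This is exactly where acyclicity of $G$ enters. I would introduce a potential/monovariant: since $G$ is a DAG, fix a topological ordering of $V$ and let the potential of $x$ be $\sum_i x_i \cdot \rho(i)$ where $\rho(i)$ is the position of $x_i$ in a reverse topological order (so that $(x_i,x_j)\in E$ implies $\rho(i) < \rho(j)$, meaning $x_j$ carries more weight). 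A flip that moves a $1$ from $x_i$ to $x_j$ along an edge $(x_i,x_j)\in E$ strictly increases this potential. Since the potential is bounded above, only finitely many flips can occur, so the process halts at some $\tilde x$ with $\phi(\tilde x)\le\phi(x^*)$ and $\tilde x\in B_n^G$ (no violated edge remains).

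Finally, I would conclude: $\tilde x\in B_n^G$ with $\phi(\tilde x) = \min_{x\in B_n}\phi(x)$ (it cannot be strictly smaller), hence $\min\{\phi(x)\mid x\in B_n^G\} \le \phi(\tilde x) = \min\{\phi(x)\mid x\in B_n\}$; the reverse inequality is immediate from $B_n^G\subseteq B_n$. This is precisely the definition of $G$ being valid with respect to minimization of $\phi$. I would also note that the same argument extends verbatim to a constrained setting $\min\{\phi(x)\mid x\in C\}$ whenever $C$ is closed under the relevant flips (e.g.\ when $C$ is cut out by inequality constraints $\sum_i w_{k,i}x_i \le C_k$ and the edge condition includes $w_{k,i}\ge w_{k,j}$ for all $k$), which is what is needed for the knapsack application in Section~\ref{sec:application}. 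The only genuinely delicate point is the termination/acyclicity argument; everything else is a direct computation with the symmetrized coefficients.
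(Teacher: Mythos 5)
Your proposal is correct and follows essentially the same route as the paper: establish the local exchange inequality $\phi(x')-\phi(x)\le S_{i,j}\le 0$ for a violated edge, then repeatedly flip $(x_i,x_j)=(1,0)$ to $(0,1)$ until the point lies in $B_n^G$. The only difference is in the termination argument, where your potential function $\sum_i x_i\rho(i)$ built from a topological ordering of the DAG is a cleaner monovariant than the paper's count of paths in the violated-edge subgraph $G_F$, but both rest on the same acyclicity hypothesis.
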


\begin{proof}
    Assume $S_{i,j}\le 0$ for any directed edge $(x_i, x_j) \in E$.
    It suffices to show that for any $x\in B_n$, there exists $\tilde x\in B_n^G$ such that $\phi(\tilde x)\le \phi(x)$.
    Take $x \in B_n$.
    If $x \in B_n^G$, then there is nothing to show.
    Otherwise, there exists a directed edge $(x_i, x_j) \in E$ such that $x_i=1, x_j=0$.
    Let $F$ be a set of all directed edges $(x_i, x_j) \in E$ such that $x_i=1, x_j=0$.
    Take some $(x_i, x_j) \in F$ and define $x'\in B_n$ by
    \begin{align}
        x_k' = \begin{cases}
            0 & k=i \\
            1 & k=j \\
            x_k & k \ne i,j.
        \end{cases}
    \end{align}
    Then, we have 
    \begin{align}
        \phi(x') - \phi(x) &= \sum_{\substack{k=1 \\ k \ne i,j}}^n (a_{j,k} - a_{i,k})x_k + a_{j,j} - a_{i,i} \\
        &\le S_{i,j} \le 0.
    \end{align}
    Therefore, we may replace $x$ by $x'$.
    We claim that repetition of this replacement terminates in a finite number of iterations.
    To show this, we observe that a subgraph $G_F=(V,F)$ of $G=(V,E)$ is acyclic.
    By each replacement of $x$, the number of paths on $G_F$ is decreased by more than or equal to 1 since $G_F$ is acyclic.
    Thus, the number of repetitions must not exceed the number of paths on $G_F$, which is finite.
    After the termination of the repetition, we obtain $G_F$ with no edges.
    This implies that $F$ is empty, so we conclude that $x \in B_n^G$.
\end{proof}

\begin{remark}
\hl{
    In the above proof, we consider a case when $\phi(x_i=1,x_j=0) \ge \phi(x_i=0,x_j=1)$ holds uniformly, i.e., for any assignment of the other variables $x_k, k\ne i,j$.
    Similarly, we can also deduce that an order $x_i=1 \Rightarrow x_j=1$ is valid when $\phi(x_i=1,x_j=0) \ge \phi(x_i=1,x_j=1)$ or $\phi(x_i=1,x_j=0) \ge \phi(x_i=0,x_j=0)$ holds uniformly.
    However, in these cases, we can actually derive a stronger result: the value of $x_j$ or $x_i$ can be safely fixed.
    Indeed, let us assume $Q_{ij}>0$ and $\phi(x_i=1,x_j=0) \ge \phi(x_i=1,x_j=1)$ holds uniformly.
    Then, we have}
    \begin{align}
        0 & \le \phi(x_i=1,x_j=0) - \phi(x_i=1,x_j=1)\\
        &= -\sum_{k:k\ne i,j} Q_{jk}x_k - Q_{jj} - Q_{ij}.
    \end{align}
    \hl{Since $Q_{ij}>0$, we also have}
    \begin{align}
        \phi(x_i=0,x_j=0) - \phi(x_i=0,x_j=1) \\
        = -\sum_{k:k\ne i,j} Q_{jk}x_k - Q_{jj} > 0.
    \end{align}
\hl{The inequalities imply that $\phi(x_j=0)\ge \phi(x_j=1)$ holds uniformly.
    Therefore, $x_j$ can be fixed to $1$ while preserving the optimum.
    Similarly, if $Q_{ij}>0$ and $\phi(x_i=1,x_j=0) \ge \phi(x_i=0,x_j=0)$ hold uniformly, we can fix $x_i$ to $0$.
    This type of fixation criterion is essentially included in most existing variable reduction methods~\mbox{\cite{boros2006preprocessing,lewis2017quadratic,rehfeldt2023faster}}, see also the discussion in Appendix~\mbox{\ref{app:review_preprocessing}} above.
    Thus, our approach does not claim anything beyond the previous work on these cases, so they are not dealt with in Theorem~\mbox{\ref{thm:sufficient_condition_valid_order}}.
}
\end{remark}

The above theorem and proof can be straightforwardly generalized to the following constrained setting,
which shows the validity of the partial order of variables given in Eq.~(\ref{eq:mkp_order}) in the main text as a direct corollary.

\begin{theorem}\label{thm:sufficient_condition_valid_order_constrained}
    Let $G=(V,E)$ be a partial order of $n$ variables and
    $\phi(x)=x^\top Q x$ be a quadratic function with $Q\in \bR^{n\times n}$.
    Let 
    $0 \le \sum_i w_{k,i} x_i \le C_k$ ($k=1,2,\cdots,m$) be linear inequality constraints with positive integers $w_{k,i}$ and $C_k$.
    Let $H_{\rm ineq}^{(k)}$ be the corresponding penalties given as in Eq.~(\ref{eq:inequality_penalty}) for each $k$.
    Let $\tilde \phi(x) \coloneqq \phi(x) + \sum_{k=1}^m \lambda_k H_{\rm ineq}^{(k)}$ be the QUBO objective function with sufficiently large $\lambda_k$ ($k=1,2,\cdots,m$) so that an optimal solution satisfies all constraints.
    For $i,j=1,2,\cdots,n$, we define
    \begin{align}
        a_{i,j} \coloneqq \begin{cases}
            Q_{i,j} + Q_{j,i} & i \ne j \\
            Q_{i,i} & i=j.
        \end{cases}
    \end{align}
    We also define
    \begin{align}
        S_{i,j} \coloneqq \sum_{\substack{l=1 \\ l\ne i,j}}^n \max \{ 0, a_{j,l} - a_{i,l} \} + a_{j,j} - a_{i,i}.
    \end{align}
    If inequalities
    $ 
        S_{i,j} \le 0$ and $w_{k,i} \ge w_{k,j}
    $ 
    for all $k=1,2,\cdots,m$ hold for every directed edge $(x_i,x_j)\in E$, then $G$ is valid with respect to minimization of $\tilde \phi$.  
\end{theorem}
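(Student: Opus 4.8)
The plan is to mimic the proof of Theorem~\ref{thm:sufficient_condition_valid_order} essentially line by line, upgrading its single-flip exchange step so that it carries along the slack variables appearing in the penalties $H_{\rm ineq}^{(k)}$ and keeps the inequality constraints satisfied. Write $N$ for the total number of variables (the $n$ original ones together with all slack variables introduced through Eq.~(\ref{eq:inequality_penalty})), and read the claim as: $\min\tilde\phi$ over $B_N$ equals $\min\tilde\phi$ over the subset on which $x_i=1\Rightarrow x_j=1$ holds for every $(x_i,x_j)\in E$. One direction is trivial, so it suffices to produce a global minimizer of $\tilde\phi$ at which all these implications hold.

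First I would invoke the hypothesis that each $\lambda_k$ is large: this yields a global minimizer $(x^{*},y^{*})$ of $\tilde\phi$ with $x^{*}$ feasible, i.e.\ $\sum_i w_{k,i}x_i^{*}\le C_k$ for all $k$. Since the binary-slack part of the penalty Eq.~(\ref{eq:inequality_penalty}) can represent every integer in $[0,C_k]$, one may re-choose the slack variables so that $H_{\rm ineq}^{(k)}=0$ for all $k$ (at a global minimizer this is in fact automatic), hence $\tilde\phi(x^{*},y^{*})=\phi(x^{*})$. Now, as long as some implication fails, pick $(x_i,x_j)\in E$ with $x_i^{*}=1,\ x_j^{*}=0$ and flip those two coordinates to obtain $x'$. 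The \emph{new} ingredient is the hypothesis $w_{k,i}\ge w_{k,j}$: it forces every weighted sum $\sum_i w_{k,i}x_i$ not to increase under the flip, so $x'$ stays feasible, and feasibility lets us re-zero every penalty, producing $y'$ with $\tilde\phi(x',y')=\phi(x')$. Because the slack variables never enter $\phi$, the estimate $\phi(x')-\phi(x^{*})\le S_{i,j}\le 0$ is exactly the computation in the proof of Theorem~\ref{thm:sufficient_condition_valid_order}. Hence $\tilde\phi(x',y')=\phi(x')\le\phi(x^{*})=\tilde\phi(x^{*},y^{*})=\min\tilde\phi$, so $(x',y')$ is again a global minimizer; replacing $(x^{*},y^{*})$ by it and iterating, the same acyclicity argument as before (the currently-violated edges span an acyclic subgraph of $G$ whose number of directed paths drops by at least one per flip) shows the process halts at a global minimizer satisfying all the implications.

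I expect the main obstacle to be bookkeeping rather than a genuinely new idea: one must verify that the binary slack encoding of Eq.~(\ref{eq:inequality_penalty}) really attains every value the constraint permits, so that the penalties can be zeroed out afresh after each flip, and one must use the ``sufficiently large $\lambda_k$'' assumption precisely to restrict the whole argument to feasible configurations, where minimizing $\tilde\phi$ reduces to minimizing $\phi$ and the unconstrained estimate applies unchanged. As a byproduct, the validity of the order of Eq.~(\ref{eq:mkp_order}) drops out as the special case in which $\phi(x)=-\sum_i v_i x_i$ is purely diagonal: then $S_{i,j}=v_i-v_j$, so $S_{i,j}\le 0$ reads $v_i\le v_j$, and together with $w_{k,i}\ge w_{k,j}$ for all $k$ this is exactly the defining condition of $E$, the tie-breaking clause $i<j$ being what makes $G$ acyclic in the sense of Definition~\ref{def:order}.
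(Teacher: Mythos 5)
Your proposal is correct and follows essentially the same route as the paper: the same two-coordinate exchange $x_i\mapsto 0,\ x_j\mapsto 1$ along a violated edge, with $w_{k,i}\ge w_{k,j}$ guaranteeing the flip preserves feasibility, $S_{i,j}\le 0$ giving $\phi(x')\le\phi(x)$, and the acyclicity of the violated-edge subgraph giving termination. The only difference is that you make explicit the bookkeeping the paper leaves implicit — that the slack encoding of Eq.~(\ref{eq:inequality_penalty}) attains every integer in $[0,C_k]$ so the penalties can be re-zeroed after each flip, reducing minimization of $\tilde\phi$ to minimization of $\phi$ over the feasible set — which is a harmless (indeed welcome) elaboration rather than a different argument.
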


\begin{proof}
    We set $C \coloneqq \{ x\in B_n \mid 0 \le \sum_i w_{k,i} x_i \le C_k \ \forall k=1,2,\cdots,m \} \subset B_n$ and $C^G \coloneqq C \cap B_n^G$.
    Assume $S_{i,j}\le 0$ and $w_{k,i} \ge w_{k,j}$
    for all $k=1,2,\cdots,m$ for any directed edge $(x_i, x_j) \in E$.
    It suffices to show that for any $x\in C$, there exists $\tilde x\in C^G$ such that $\phi(\tilde x)\le \phi(x)$.
    Take $x \in C$.
    If $x \in C^G$, then there is nothing to show.
    Otherwise, there exists a directed edge $(x_i, x_j) \in E$ such that $x_i=1, x_j=0$.
    Let $F$ be a set of all directed edges $(x_i, x_j) \in E$ such that $x_i=1, x_j=0$.
    Take some $(x_i, x_j) \in F$ and define $x'\in B_n$ by
    \begin{align}
        x_k' = \begin{cases}
            0 & k=i \\
            1 & k=j \\
            x_k & k \ne i,j.
        \end{cases}
    \end{align}
    Then, $x'$ satisfies all constraints by inequalities $w_{k,i} \ge w_{k,j}$.
    Thus, we have $x' \in C$. 
    Furthermore, we have 
    \begin{align}
        \phi(x') - \phi(x) &= \sum_{\substack{k=1 \\ k \ne i,j}}^n (a_{j,k} - a_{i,k})x_k + a_{j,j} - a_{i,i} \\
        &\le S_{i,j} \le 0.
    \end{align}
    Therefore, we may replace $x$ by $x'$.
    We claim that repetition of this replacement terminates in a finite number of iterations.
    To show this, we observe that a subgraph $G_F=(V,F)$ of $G=(V,E)$ is acyclic.
    By each replacement of $x$, the number of paths on $G_F$ is decreased by more than or equal to 1 since $G_F$ is acyclic.
    Thus, the number of repetitions must not exceed the number of paths on $G_F$, which is finite.
    After the termination of repetition, we obtain $G_F$ with no edges.
    This implies that $F$ is empty, so we conclude that $x \in C^G$.
\end{proof}

\begin{remark}
\hl{
    In Theorem~\mbox{\ref{thm:sufficient_condition_valid_order_constrained}} above, the positivity of weights $w_{k,i}$ and the form of inequality $0 \le \sum_i w_{k,i} x_i \le C_k$ are not critical assumptions.
    Indeed, we may consider more general inequality constraints $L_k \le \sum_i w_{k,i} x_i \le U_k$ with arbitrary (possibly negative) integers $w_{k,i}, L_k, U_k$ to obtain the same conclusion in Theorem~\mbox{\ref{thm:sufficient_condition_valid_order_constrained}} with no change in the proof.
    In other words, Theorem~\mbox{\ref{thm:sufficient_condition_valid_order_constrained}} can be generalized to \emph{integer linear programming (ILP) problems}.
    Regarding the applicability of the proposed method,
    the difference between the MKP case and general ILP case is that on the MKP, a quadratic term $x_i x_j$ can always be linearized if $x_i \Rightarrow x_j$ is valid, while it does not hold on ILP since the quadratic term might have a negative coefficient in the objective function (when $w_{k,i}$ or $w_{k,j}$ is negative for some $k$).
}
\end{remark}

\subsection{Proof of Theorem~\ref{thm:algorithm_completeness}}\label{app:algorithm_completeness}

We provide a proof of Theorem~\ref{thm:algorithm_completeness}.
For ease of notation, we define $a_i \coloneqq a_{i,i}$ and $c^+ \coloneqq \max\{0, c\}$ for a real number $c \in \mathbb R$.
We first show a lemma used in the proof.

\begin{lemma}\label{lem:cycle_is_symmetric}
    Let $l$ be a positive integer and $(a_{i,j})_{i,j=1,\cdots,l} \in \mathbb R^{l \times l}$ be a symmetric matrix.
    Assume inequality $a_{i,k} \ge a_{i+1,k}$ holds for any $i, k=1,2,\cdots,l$ with $k\ne i,i+1$.
    Then, for any $i$, we have 
    a chain of inequality
    \begin{align}
        a_{1,i} \ge \cdots \ge a_{i-1,i}
                \ge a_{i+1,i} \ge \cdots
                \ge a_{l,i}.
    \end{align}
\end{lemma}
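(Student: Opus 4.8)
The plan is to produce the asserted chain of inequalities in two monotone runs and then glue them with a single extra step. Fixing $i$, I would first read off from the hypothesis, with its second index set equal to $i$: for every $j$ with $1\le j\le l-1$ and $j\notin\{i-1,i\}$ the inequality $a_{j,i}\ge a_{j+1,i}$ holds, which immediately yields
\begin{align*}
a_{1,i}\ge a_{2,i}\ge\cdots\ge a_{i-1,i}\qquad\text{and}\qquad a_{i+1,i}\ge a_{i+2,i}\ge\cdots\ge a_{l,i}.
\end{align*}
When $i=1$ or $i=l$ one of these two runs is empty and nothing further is needed, so I would dispose of those boundary cases first.

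For $2\le i\le l-1$ the only missing link is $a_{i-1,i}\ge a_{i+1,i}$, i.e.\ the step jumping over the deleted diagonal entry $a_{i,i}$. This is the crux, and it is precisely where symmetry of the matrix $(a_{i,j})$ has to be used: applying the hypothesis with second index $i$ says nothing about this pair, since that index is excluded there. Instead I would route through a neighbouring entry, invoking the hypothesis twice with admissible second indices together with symmetry:
\begin{align*}
a_{i-1,i}=a_{i,i-1}\ge a_{i+1,i-1}=a_{i-1,i+1}\ge a_{i,i+1}=a_{i+1,i},
\end{align*}
where the first inequality applies the hypothesis to the consecutive pair $(i,i+1)$ with second index $i-1$ (legitimate since $i-1\notin\{i,i+1\}$) and the second applies it to the consecutive pair $(i-1,i)$ with second index $i+1$ (legitimate since $i+1\notin\{i-1,i\}$).

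Finally I would concatenate the two runs through this middle inequality to obtain the full chain $a_{1,i}\ge\cdots\ge a_{i-1,i}\ge a_{i+1,i}\ge\cdots\ge a_{l,i}$. The hard part is really just isolating and justifying that one middle jump; the rest is index bookkeeping — checking that the excluded pairs $\{i-1,i\}$ and $\{i,i+1\}$ are never illicitly invoked and that the endpoints $i=1,l$ are handled separately — and involves no actual computation.
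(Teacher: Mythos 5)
Your proposal is correct and follows essentially the same route as the paper's proof: the two monotone runs come directly from the hypothesis with second index $i$, and the bridging step $a_{i-1,i}=a_{i,i-1}\ge a_{i+1,i-1}=a_{i-1,i+1}\ge a_{i,i+1}=a_{i+1,i}$ is exactly the chain the paper uses. Your explicit handling of the boundary cases $i=1,l$ is a minor addition the paper leaves implicit.
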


\begin{proof}
    Take any $i\in \{1,\cdots,l\}$.
    From the assumption, we have 
    \begin{align}
        a_{1,i} \ge \cdots \ge a_{i-1,i}        
    \end{align}
    and
    \begin{align}
        a_{i+1,i} \ge \cdots \ge a_{l,i}.
    \end{align}
    Moreover, by symmetry of $(a_{i,j})$ and the assumption, we have
    \begin{align}
        a_{i-1,i} = a_{i,i-1} 
        \ge a_{i+1,i-1}
        = a_{i-1, i+1}
        \ge a_{i,i+1}
        = a_{i+1, i}.
    \end{align}
    Thus, we have a chain of inequality as desired.
    
\end{proof}

\begin{theorem*}[= Theorem~\ref{thm:algorithm_completeness}]
    Let $Q\in \bR^{n \times n}$ be a square real matrix and $x=(x_1, \cdots, x_n)$ be a vector of binary variables.
    Then, a graph $G=(V, E)$ with a node set $V=\{x_1, \cdots, x_n\}$ and an edge set $E$ obtained by running Algorithm~\ref{alg:extract_order} with $Q$ given as the input is a 
    directed acyclic graph.
    Moreover, $G$ is valid with respect to minimization of a quadratic function $\phi(x) = x^\top Q x$.
\end{theorem*}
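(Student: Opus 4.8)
\noindent\emph{Proof plan.}\ The plan is to establish the two assertions in order: first that $G$ is acyclic (so that $G$ is an order of variables in the sense of Definition~\ref{def:order}), and then that $G$ is valid, which will follow from Theorem~\ref{thm:sufficient_condition_valid_order} once acyclicity is known. I would begin by extracting a clean description of what Algorithm~\ref{alg:extract_order} actually does: a pair $(x_i,x_j)$ enters $E$ exactly when $i\ne j$, the reverse pair $(x_j,x_i)$ is not in $E$ at that moment, $Q_{j,j}\le Q_{i,i}$, and the running quantity $S$ finishes the inner loop with $S\le 0$. The inner pruning is faithful because after scanning index $k$ the value $S$ equals $\sum_{k'\le k,\,k'\ne i,j}\max\{0,a_{j,k'}-a_{i,k'}\}+(Q_{j,j}-Q_{i,i})$, every term not yet added is nonnegative, and if the loop completes then $S=S_{i,j}$ in the notation of Theorem~\ref{thm:sufficient_condition_valid_order} (using $a_{i,i}=Q_{i,i}$ and $a_{j,k}=Q_{j,k}+Q_{k,j}$ for $k\ne j$; note the algorithm uses $Q_{j,j}-Q_{i,i}$, not $a_{j,j}-a_{i,i}$, for the diagonal term, so there is no mismatch). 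Hence in the final $E$: (a) every edge $(x_i,x_j)$ satisfies $S_{i,j}\le 0$ and $Q_{j,j}\le Q_{i,i}$, and (b) $E$ has no $2$-cycle, since whichever of $(x_i,x_j),(x_j,x_i)$ the algorithm processes second fails the ``reverse pair not in $E$'' test.

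For acyclicity I would argue by contradiction: suppose $G$ has a simple directed cycle $x_{i_1}\to x_{i_2}\to\cdots\to x_{i_l}\to x_{i_1}$, which by (b) has $l\ge 3$. Chaining $Q_{i_{p+1},i_{p+1}}\le Q_{i_p,i_p}$ around the cycle forces all $Q_{i_p,i_p}$ equal, so for each edge the inequality $S_{i_p,i_{p+1}}\le 0$ collapses to $a_{i_{p+1},k}\le a_{i_p,k}$ for all $k\ne i_p,i_{p+1}$. For $k$ outside the cycle this gives $a_{i_1,k}\ge a_{i_2,k}\ge\cdots\ge a_{i_l,k}\ge a_{i_1,k}$, hence all equal; for $k$ a cycle vertex, applying Lemma~\ref{lem:cycle_is_symmetric} to the symmetric submatrix $(a_{i_p,i_q})_{p,q}$ (whose consecutive rows satisfy precisely the monotonicity hypothesis, cyclically) together with the closing edge of the cycle forces all off-diagonal entries in each column of that submatrix to coincide. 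Consequently $a_{i_p,k}=a_{i_q,k}$ for every pair of cycle vertices $i_p,i_q$ and every $k\ne i_p,i_q$, so $S_{i_p,i_q}=0$ in \emph{both} directions for all such pairs.

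Finally I would use the processing order of Algorithm~\ref{alg:extract_order}. Going around the cycle, the indices $i_1,\dots,i_l$ cannot be strictly increasing, so some edge $(x_{i_p},x_{i_{p+1}})$ has $i_p>i_{p+1}$. The pair $(x_{i_{p+1}},x_{i_p})$ is examined in the earlier outer iteration $i=i_{p+1}$; at that point $(x_{i_p},x_{i_{p+1}})$ is not yet in $E$ (it could only be added in the later iteration $i=i_p$), the test $Q_{i_p,i_p}\le Q_{i_{p+1},i_{p+1}}$ holds with equality, and $S_{i_{p+1},i_p}=0$, so $(x_{i_{p+1}},x_{i_p})$ is inserted into $E$. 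Then when the later iteration $i=i_p$ reaches the pair $(x_{i_p},x_{i_{p+1}})$, the reverse pair is already present and the edge is rejected, contradicting $(x_{i_p},x_{i_{p+1}})\in E$. Hence $G$ is a directed acyclic graph, i.e., an order of variables, and by (a) each of its edges satisfies Eq.~(\ref{eq:detect_valid_edge}), so Theorem~\ref{thm:sufficient_condition_valid_order} gives that $G$ is valid with respect to minimization of $\phi$.

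I expect the main obstacle to be the acyclicity step, and within it the passage from Lemma~\ref{lem:cycle_is_symmetric} to the full conclusion that $S_{i_p,i_q}=0$ in both directions along a cycle (requiring the equalities for coefficients with \emph{all} other indices $k$, not just the inequalities that a single edge provides); once this symmetry is in place, the contradiction from the pair-processing order is straightforward, and the validity claim is then an immediate application of the already-proved sufficient condition.
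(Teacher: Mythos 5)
Your proposal is correct and follows essentially the same route as the paper's proof: show that any directed cycle in $E$ would force the diagonal entries and all coefficients $a_{i_p,k}$ along the cycle to coincide (using Lemma~\ref{lem:cycle_is_symmetric} for indices $k$ on the cycle), i.e., the cycle variables are symmetric, and then observe that the algorithm's scan order together with the $(x_j,x_i)\notin E$ test only ever adds the index-increasing edge between symmetric variables, contradicting the existence of a descending edge that every cycle must contain; validity then follows from Theorem~\ref{thm:sufficient_condition_valid_order}. Your write-up merely makes explicit two points the paper leaves terse — that the pruned inner loop faithfully computes $S_{i,j}$, and the exact scheduling argument for why the reverse edge gets inserted first — but the decomposition and key lemma are identical.
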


\begin{proof}
    To show that $G$ is acyclic, we first prove that a cycle contained in $G$ must consist of symmetric variables. 
    Assume that edges $(x_{i_1}, x_{i_2}), \cdots, (x_{i_{l-1}}, x_{i_l}), (x_{i_l}, x_{i_1}) \in E$ form a cycle.
    Since each edge satisfies Eq.~(\ref{eq:detect_valid_edge}), we have $a_{i_1} \ge a_{i_2} \ge \cdots \ge a_{i_l} \ge a_{i_1}$.
    Therefore, equality $a_{i_1} = a_{i_2} = \cdots = a_{i_l}$ holds.
    Furthermore, again by Eq.~(\ref{eq:detect_valid_edge}), we have $(a_{i_{j+1},k} - a_{i_j,k})^+=0$ for each $j=1,\cdots, l$ and any $k=1,\cdots,n$ with $ k\ne i,j$.
    Here, we set $i_{l+1}\coloneqq i_1$.
    Therefore, for $k \in \{1,\cdots,\} \setminus \{i_1, \cdots, i_l\}$, we have $a_{i_1, k} \ge a_{i_2, k} \ge \cdots a_{i_l, k} \ge a_{i_1, k}$, which then must be equality $a_{i_1, k} = a_{i_2, k} = \cdots a_{i_l, k}$.
    For $k \in \{i_1, \cdots, i_l\}$, we take $t$ such that $i_t =k$.
    Applying Lemma~\ref{lem:cycle_is_symmetric} with suitable reindexing, we get
    \begin{align}
    a_{i_1, k} \ge \cdots \ge a_{i_{t-1}, k} \ge  a_{i_{t+1}, k} \ge \cdots \ge a_{i_l, k} \ge a_{i_1, k},
    \end{align}
    which must be equality.
    Thus, $\phi(x)$ is symmetric with respect to $x_{i_1}, \cdots, x_{i_l}$.
    In Algorithm~\ref{alg:extract_order}, if $\phi(x)$ is symmetric with respect to $x_i$ and $x_j$ and $i<j$, then $(x_i, x_j) \in E$ and $(x_j, x_i) \notin E$ are ensured by line~\ref{state:detect_start}.
    Thus, $G$ is acyclic.
    Note that this proof explains that Algorithm~\ref{alg:extract_order} outputs a total order over symmetric variables $x_{i_1}, \cdots, x_{i_l}$ described in Example~\ref{ex:symmetric_order}.

    Validity of $G$ directly follows from Theorem~\ref{thm:sufficient_condition_valid_order}, since the edge set $E$ consists of edges satisfying Eq.~(\ref{eq:detect_valid_edge}).
\end{proof}

\section{Experiment Details}

\subsection{Computational Setup}\label{app:experiment_setup}

Source code for the experiments is written with Python~3.9.16 using D-Wave Ocean SDK\footnote{https://github.com/dwavesystems/dwave-ocean-sdk}~6.3.0 and Fixstars Amplify SDK~\cite{amplify}~0.11.1 libraries, except for Algorithm~\ref{alg:extract_order}.
To measure processing time,
Algorithm~\ref{alg:extract_order} is implemented with Cython~0.29.35 and compiled as code of C\texttt{++}.
Programs are run on MacBook Pro with Apple M2 chip and 8GB memory.
We use Amplify Annealing Engine (AE)~\cite{amplify} of version v0.7.3-A100 as an Ising machine with an execution time of 1 second.

\hl{
For minor-embedding search, we use the minorminer algorithm~\mbox{\cite{cai2014practical}} implemented on D-Wave Ocean SDK with a timeout of 1000 seconds.
To enhance the performance of the minorminer algorithm, we adopt a method setting clique embedding as an initial state of the algorithm, which is simple yet effective as shown in a previous study~\mbox{\cite{zbinden2020embedding}}.
In preliminary experiments, we found an interesting fact that better results are occasionally yielded setting clique embedding of a complete graph of a size slightly less than $n$ as an initial state.
Thus, for QUBO instances of size $n=180$ and $n=232$, we run the minorminer algorithm with clique embedding of a complete graph with size each of $\{160, 180\}$ and $\{200, 232\}$
as the initial state, respectively, and then, we adopt the result with fewer auxiliary variables to report.
As a result, we observed that clique embedding of the smaller size indeed gives better results for synthetic instances of $p=1.5, 2.0$ on the first experiment.
On the second experiment testing the embeddability, 
we set clique embedding of a complete graph of size $180$ and $232$ as the initial state 
for $P_{16}$ and $Z_{15}$, respectively.
When solving a QUBO problem after minor-embedding,
chain strength for minor-embedding is calculated with
\texttt{dwave.embedding.chain\_strength.} 
\texttt{uniform\_torque\_compensation()} function
in Ocean SDK.
Note that energy for a minor-embedded QUBO problem can be defined in two ways: raw energy of solutions of the embedded problem and \emph{unembedded} energy of solutions on the original problem decoded by fixing broken chains in some way.
We observed that 
no broken chains appear in 
all solutions in the experiments in the main text, and thus they are not distinguished.
}

\hl{
For the experiments on the MKP, we program the penalty term Eq.~(\mbox{\ref{eq:inequality_penalty}}) of the inequality constraints using the Amplify SDK.
We adopt two slightly different ways of implementing each penalty term:
}
\begin{enumerate}[leftmargin=1.75cm,label=\hl{Method~{{\arabic*}}}.]
    \item \hl{Penalty terms are programmed as-is by explicitly defining auxiliary variables.}
    \item \hl{Penalty terms are defined using \texttt{less\_equal()} function in Amplify SDK~\mbox{\cite{amplify}} 
    without explicitly defining auxiliary variables.}
\end{enumerate}
\hl{
We call the above methods Method~1 and Method~2, respectively.
In Method~1, the whole objective function of the MKP in QUBO form with the proposed method is given as follows:}
\begin{align}\label{eq:linearized_mkp_app}
    H_{\rm lin,1} &= -\sum v_i x_i 
    \notag \\
    + \lambda &\sum_{k=1}^m \left(H^{(k)}_{\rm ineq} + \sum_{(i,j)\in E} 2w_{k,i}w_{k,j}(x_i - x_i x_j) \right).
\end{align}
\hl{
The results for Method~1 are reported in the main text.
In Method~2, 
\texttt{less\_equal()} function in Amplify SDK serves functionality to conceal auxiliary variables and enable users to avoid tedious tasks to program penalties of constraints.
The implementation of the function is undisclosed and might be specially designed to enhance performance of the Ising machine.
While an object returned by the function can be added to usual polynomial objects or multiplied by a scalar, users cannot access and change the penalty term.
Therefore, we cannot linearize each penalty as in Eq.~(\mbox{\ref{eq:linearized_mkp_app}}).
In the experiment, we apply the proposed method in Method~2 by replacing $H^{(k)}_{\rm ineq}$ by \texttt{less\_equal()} function in Eq.~(\mbox{\ref{eq:linearized_mkp_app}}) and reformulating as follows:
}
\begin{align}
    H_{\rm lin,2} =& -\sum v_i x_i \notag 
    + \lambda \sum_{k=1}^m \texttt{less\_equal}\bigl(\sum_{i} w_{k,i}x_i, C_k \bigr) \notag \\
    & +\lambda \sum_{k=1}^m \sum_{(i,j)\in E} 2w_{k,i}w_{k,j}(x_i - x_i x_j),
\end{align}
\hl{
where $\texttt{less\_equal}\bigl(\sum_{i} w_{k,i}x_i, C_k \bigr)$ represents the inequality constraint $\sum_{i} w_{k,i}x_i \le C_k$.
We report the results for Method~2 in Appendix~\mbox{\ref{app:full_mkp}}.
}

\begin{table*}[!ht]
\begin{minipage}[t]{\textwidth}
  \caption{Optimality Gap on MKPs for Baseline and Proposed Method Based on Method 1.}
  \label{tab:mkp_best_value_case1_full}
  \centering
  \begin{tabular}{c@{\hspace{3pt}}c@{\hspace{4pt}}crrrrrrrr@{\hspace{5pt}}r@{\hspace{2pt}}rr@{\hspace{5pt}}r@{\hspace{2pt}}r}
    \hline
    & Instance & & & \multicolumn{3}{c}{Baseline} & & \multicolumn{8}{c}{Linearized}
    \\
    \cline{1-3}
    \cline{5-7}
    \cline{9-16}
    $m$ & $n$ & $\alpha$ 
    & & \multicolumn{1}{c}{\#FS} & Avg. & Best 
    & & \multicolumn{1}{c}{$|E|$} & \multicolumn{1}{c}{\#FS} & Avg. & \multicolumn{2}{c}{\hl{RR (\%)}} & Best  &  \multicolumn{2}{c}{\hl{RR (\%)}}
    \\ 
    \hline \hline
 1 & 100 & 0.25 && 50.0 & 14.940&  7.995 &&  2013.6 &50.0 &  0.501 & \hl{$-96.7 \pm $}&\hl{$ 0.9$} & 0.014 & \hl{$-99.8 \pm $}&\hl{$ 0.6$} \\
   &     & 0.50 && 50.0 & 14.249&  8.110 &&  2033.0 &49.9 &  1.782 & \hl{$-87.0 \pm $}&\hl{$ 13.5$} & 0.026 & \hl{$-99.7 \pm $}&\hl{$ 0.6$} \\
   &     & 0.75 && 50.0 & 13.851&  5.792 &&  1973.0 &50.0 &  6.447 & \hl{$-51.9 \pm $}&\hl{$ 32.8$} & 0.195 & \hl{$-96.5 \pm $}&\hl{$ 3.2$} \\
 1 & 250 & 0.25 && 50.0 & 20.973& 14.983 && 12529.9 &49.8 &  2.137 & \hl{$-89.8 \pm $}&\hl{$ 3.8$} & 0.285 & \hl{$-98.1 \pm $}&\hl{$ 1.1$} \\
   &     & 0.50 && 50.0 & 17.378& 11.948 && 12731.2 &49.4 &  1.682 & \hl{$-90.5 \pm $}&\hl{$ 5.4$} & 0.097 & \hl{$-99.1 \pm $}&\hl{$ 1.0$} \\
   &     & 0.75 && 50.0 & 13.697&  8.278 && 12552.8 &50.0 &  3.066 & \hl{$-77.2 \pm $}&\hl{$ 7.2$} & 0.098 & \hl{$-98.8 \pm $}&\hl{$ 0.5$} \\
 1 & 500 & 0.25 && 49.8 & 21.422& 18.074 && 51922.9 &48.7 &  1.560 & \hl{$-92.7 \pm $}&\hl{$ 3.4$} & 0.307 & \hl{$-98.3 \pm $}&\hl{$ 1.1$} \\
   &     & 0.50 && 50.0 & 17.030& 14.964 && 51008.0 &49.2 &  1.365 & \hl{$-92.1 \pm $}&\hl{$ 4.4$} & 0.298 & \hl{$-98.0 \pm $}&\hl{$ 0.7$} \\
   &     & 0.75 && 50.0 & 13.538&  9.908 && 50326.0 &50.0 &  3.437 & \hl{$-74.7 \pm $}&\hl{$ 5.5$} & 0.719 & \hl{$-92.6 \pm $}&\hl{$ 4.2$} \\
 5 & 100 & 0.25 && 50.0 & 15.386& 10.798 &&    23.8 &50.0 & 13.792 & \hl{$-10.3 \pm $}&\hl{$ 4.2$} & 9.044 & \hl{$-15.7 \pm $}&\hl{$ 13.1$} \\
   &     & 0.50 && 50.0 & 13.048&  9.191 &&    29.5 &50.0 & 11.502 & \hl{$-11.5 \pm $}&\hl{$ 6.1$} & 8.039 & \hl{$-12.4 \pm $}&\hl{$ 7.0$} \\
   &     & 0.75 && 50.0 & 22.991& 12.533 &&    26.7 &50.0 & 22.996 & \hl{$+  0.3 \pm $}&\hl{$ 8.7$} &12.481 & \hl{$+  0.4 \pm $}&\hl{$ 14.7$} \\
 5 & 250 & 0.25 && 50.0 & 17.190& 14.497 &&   156.7 &50.0 & 13.897 & \hl{$-19.1 \pm $}&\hl{$ 3.0$} &11.492 & \hl{$-20.2 \pm $}&\hl{$ 7.1$} \\
   &     & 0.50 && 50.0 & 13.401& 11.257 &&   143.4 &50.0 & 10.666 & \hl{$-20.2 \pm $}&\hl{$ 5.4$} & 8.598 & \hl{$-23.5 \pm $}&\hl{$ 3.8$} \\
   &     & 0.75 && 50.0 & 24.211& 17.234 &&   148.4 &50.0 & 22.012 & \hl{$ -9.0 \pm $}&\hl{$ 2.7$} &15.283 & \hl{$-10.2 \pm $}&\hl{$ 17.0$} \\
 5 & 500 & 0.25 && 50.0 & 18.885& 17.282 &&   665.7 &49.8 & 13.930 & \hl{$-26.2 \pm $}&\hl{$ 2.6$} &12.064 & \hl{$-30.1 \pm $}&\hl{$ 4.0$} \\
   &     & 0.50 && 49.9 & 17.941& 13.074 &&   610.0 &50.0 & 14.092 & \hl{$-21.1 \pm $}&\hl{$ 8.9$} & 9.619 & \hl{$-26.4 \pm $}&\hl{$ 2.8$} \\
   &     & 0.75 && 50.0 & 25.162& 17.710 &&   643.3 &50.0 & 21.479 & \hl{$-14.6 \pm $}&\hl{$ 5.0$} &14.228 & \hl{$-19.1 \pm $}&\hl{$ 9.5$} \\
10 & 100 & 0.25 && 50.0 & 24.967& 16.539 &&     0.8 &49.9 & 25.327 & \hl{$+  1.7 \pm $}&\hl{$ 5.4$} &16.004 & \hl{$ -3.1 \pm $}&\hl{$ 10.1$} \\
   &     & 0.50 && 49.9 & 20.688& 13.788 &&     1.2 &50.0 & 21.432 & \hl{$+  3.7 \pm $}&\hl{$ 4.5$} &14.443 & \hl{$+  5.1 \pm $}&\hl{$ 7.9$} \\
   &     & 0.75 && 50.0 & 30.995& 22.485 &&     0.6 &50.0 & 30.726 & \hl{$ -0.8 \pm $}&\hl{$ 3.1$} &22.885 & \hl{$+  2.5 \pm $}&\hl{$ 12.0$} \\
10 & 250 & 0.25 && 49.4 & 21.252& 17.981 &&     5.2 &49.6 & 20.997 & \hl{$ -1.1 \pm $}&\hl{$ 3.7$} &18.026 & \hl{$+  0.5 \pm $}&\hl{$ 5.4$} \\
   &     & 0.50 && 50.0 & 18.601& 14.723 &&     4.0 &50.0 & 18.480 & \hl{$ -0.5 \pm $}&\hl{$ 4.9$} &14.819 & \hl{$+  0.7 \pm $}&\hl{$ 5.0$} \\
   &     & 0.75 && 50.0 & 34.022& 26.468 &&     2.9 &50.0 & 34.030 & \hl{$+  0.0 \pm $}&\hl{$ 1.0$} &26.280 & \hl{$ -0.7 \pm $}&\hl{$ 7.0$} \\
10 & 500 & 0.25 && 49.7 & 20.623& 18.580 &&    14.2 &49.2 & 20.524 & \hl{$ -0.5 \pm $}&\hl{$ 1.4$} &18.266 & \hl{$ -1.6 \pm $}&\hl{$ 2.9$} \\
   &     & 0.50 && 50.0 & 23.015& 15.670 &&    15.0 &49.9 & 21.769 & \hl{$ -5.3 \pm $}&\hl{$ 5.6$} &15.609 & \hl{$ -0.3 \pm $}&\hl{$ 4.6$} \\
   &     & 0.75 && 50.0 & 33.994& 27.096 &&    14.5 &50.0 & 33.799 & \hl{$ -0.6 \pm $}&\hl{$ 3.5$} &26.668 & \hl{$ -0.4 \pm $}&\hl{$ 13.7$} \\
    \hline
  \end{tabular}
\end{minipage}
\bigskip
\begin{minipage}[t]{\textwidth}
  \caption{Optimality Gap on MKPs for Baseline and Proposed Method Based on Method 2.}
  \label{tab:mkp_best_value_case2_full}
  \centering
  \begin{tabular}{c@{\hspace{3pt}}c@{\hspace{4pt}}crrrrrrrr@{\hspace{5pt}}r@{\hspace{2pt}}rr@{\hspace{5pt}}r@{\hspace{2pt}}r}
    \hline
    & Instance & & & \multicolumn{3}{c}{Baseline} & & \multicolumn{8}{c}{Linearized}
    \\
    \cline{1-3}
    \cline{5-7}
    \cline{9-16}
    $m$ & $n$ & $\alpha$ 
    & & \multicolumn{1}{c}{\#FS} & Avg. & Best 
    & & \multicolumn{1}{c}{$|E|$} & \multicolumn{1}{c}{\#FS} & Avg. & \multicolumn{2}{c}{\hl{RR (\%)}} & Best  &  \multicolumn{2}{c}{\hl{RR (\%)}}
    \\ 
    \hline \hline
 1 & 100 & 0.25 && 50.0 &  6.616&  3.287 &&  2013.6 &50.0 &  0.004 & \hl{$-99.9 \pm $}&\hl{$ 0.1$} & 0.000 & \hl{$-100.0 \pm $}&\hl{$ 0.0$} \\
   &     & 0.50 && 50.0 &  6.447&  4.149 &&  2033.0 &50.0 &  0.046 & \hl{$-99.3 \pm $}&\hl{$ 0.9$} & 0.000 & \hl{$-100.0 \pm $}&\hl{$ 0.0$} \\
   &     & 0.75 && 50.0 &  5.194&  3.189 &&  1973.0 &50.0 &  0.062 & \hl{$-98.8 \pm $}&\hl{$ 1.3$} & 0.000 & \hl{$-100.0 \pm $}&\hl{$ 0.0$} \\
 1 & 250 & 0.25 && 50.0 & 14.230& 10.524 && 12529.9 &50.0 &  0.693 & \hl{$-95.2 \pm $}&\hl{$ 2.0$} & 0.216 & \hl{$-98.0 \pm $}&\hl{$ 1.4$} \\
   &     & 0.50 && 50.0 & 11.780&  9.643 && 12731.2 &50.0 &  0.346 & \hl{$-97.1 \pm $}&\hl{$ 0.8$} & 0.117 & \hl{$-98.8 \pm $}&\hl{$ 0.6$} \\
   &     & 0.75 && 50.0 &  9.513&  5.512 && 12552.8 &50.0 &  1.829 & \hl{$-80.8 \pm $}&\hl{$ 3.0$} & 0.213 & \hl{$-95.8 \pm $}&\hl{$ 2.4$} \\
 1 & 500 & 0.25 && 50.0 & 18.361& 14.414 && 51922.9 &50.0 &  1.711 & \hl{$-90.7 \pm $}&\hl{$ 2.7$} & 0.721 & \hl{$-95.0 \pm $}&\hl{$ 1.8$} \\
   &     & 0.50 && 50.0 & 13.650& 11.995 && 51008.0 &50.0 &  1.067 & \hl{$-92.2 \pm $}&\hl{$ 0.9$} & 0.434 & \hl{$-96.4 \pm $}&\hl{$ 1.0$} \\
   &     & 0.75 && 50.0 & 11.126&  7.268 && 50326.0 &50.0 &  3.746 & \hl{$-66.3 \pm $}&\hl{$ 2.0$} & 1.324 & \hl{$-80.3 \pm $}&\hl{$ 8.2$} \\
 5 & 100 & 0.25 && 50.0 & 13.631&  9.610 &&    23.8 &50.0 & 12.198 & \hl{$-10.5 \pm $}&\hl{$ 2.9$} & 8.527 & \hl{$-10.4 \pm $}&\hl{$ 10.9$} \\
   &     & 0.50 && 50.0 & 11.287&  7.821 &&    29.5 &50.0 &  9.753 & \hl{$-13.5 \pm $}&\hl{$ 2.6$} & 7.241 & \hl{$ -6.8 \pm $}&\hl{$ 10.3$} \\
   &     & 0.75 && 50.0 & 17.959& 11.157 &&    26.7 &50.0 & 16.865 & \hl{$ -6.0 \pm $}&\hl{$ 3.5$} &11.440 & \hl{$+  3.2 \pm $}&\hl{$ 15.1$} \\
 5 & 250 & 0.25 && 50.0 & 16.201& 13.624 &&   156.7 &50.0 & 13.217 & \hl{$-18.4 \pm $}&\hl{$ 2.6$} &11.147 & \hl{$-18.1 \pm $}&\hl{$ 4.0$} \\
   &     & 0.50 && 50.0 & 12.585& 10.572 &&   143.4 &50.0 & 10.072 & \hl{$-19.9 \pm $}&\hl{$ 3.7$} & 8.071 & \hl{$-23.5 \pm $}&\hl{$ 6.9$} \\
   &     & 0.75 && 50.0 & 21.923& 15.029 &&   148.4 &50.0 & 19.428 & \hl{$-11.3 \pm $}&\hl{$ 1.6$} &12.665 & \hl{$-14.9 \pm $}&\hl{$ 13.0$} \\
 5 & 500 & 0.25 && 50.0 & 17.841& 16.020 &&   665.7 &50.0 & 12.905 & \hl{$-27.6 \pm $}&\hl{$ 2.3$} &11.582 & \hl{$-27.7 \pm $}&\hl{$ 2.2$} \\
   &     & 0.50 && 50.0 & 14.363& 12.695 &&   610.0 &50.0 & 10.925 & \hl{$-23.9 \pm $}&\hl{$ 1.3$} & 9.107 & \hl{$-28.3 \pm $}&\hl{$ 3.8$} \\
   &     & 0.75 && 50.0 & 21.767& 15.699 &&   643.3 &50.0 & 18.709 & \hl{$-14.0 \pm $}&\hl{$ 2.5$} &12.368 & \hl{$-20.1 \pm $}&\hl{$ 12.5$} \\
10 & 100 & 0.25 && 50.0 & 19.014& 14.779 &&     0.8 &50.0 & 18.853 & \hl{$ -0.8 \pm $}&\hl{$ 1.2$} &15.288 & \hl{$+  4.8 \pm $}&\hl{$ 12.2$} \\
   &     & 0.50 && 50.0 & 17.700& 13.802 &&     1.2 &50.0 & 17.489 & \hl{$ -1.2 \pm $}&\hl{$ 1.7$} &13.148 & \hl{$ -4.6 \pm $}&\hl{$ 5.4$} \\
   &     & 0.75 && 50.0 & 25.902& 19.740 &&     0.6 &50.0 & 25.973 & \hl{$+  0.3 \pm $}&\hl{$ 1.9$} &20.226 & \hl{$+  3.8 \pm $}&\hl{$ 14.1$} \\
10 & 250 & 0.25 && 50.0 & 19.629& 17.475 &&     5.2 &50.0 & 19.429 & \hl{$ -1.0 \pm $}&\hl{$ 0.8$} &17.041 & \hl{$ -2.4 \pm $}&\hl{$ 4.7$} \\
   &     & 0.50 && 50.0 & 17.495& 14.297 &&     4.0 &50.0 & 17.311 & \hl{$ -1.0 \pm $}&\hl{$ 1.5$} &14.439 & \hl{$+  1.3 \pm $}&\hl{$ 5.5$} \\
   &     & 0.75 && 50.0 & 31.308& 24.759 &&     2.9 &50.0 & 31.344 & \hl{$+  0.1 \pm $}&\hl{$ 2.0$} &24.898 & \hl{$+  0.9 \pm $}&\hl{$ 8.5$} \\
10 & 500 & 0.25 && 50.0 & 19.887& 18.186 &&    14.2 &50.0 & 19.542 & \hl{$ -1.7 \pm $}&\hl{$ 0.9$} &17.702 & \hl{$ -2.7 \pm $}&\hl{$ 3.8$} \\
   &     & 0.50 && 50.0 & 18.908& 15.287 &&    15.0 &50.0 & 18.550 & \hl{$ -1.9 \pm $}&\hl{$ 1.9$} &15.124 & \hl{$ -0.9 \pm $}&\hl{$ 5.1$} \\
   &     & 0.75 && 50.0 & 32.709& 27.397 &&    14.5 &50.0 & 32.453 & \hl{$ -0.8 \pm $}&\hl{$ 0.8$} &26.839 & \hl{$ -2.0 \pm $}&\hl{$ 5.6$} \\
    \hline
  \end{tabular}
\end{minipage}
\end{table*}

\subsection{Properties of Synthetic QUBO Instances}\label{app:synthetic_qubo}

We explain properties of synthetic QUBO instances described in the main text.

\hl{
The generated QUBO matrix has the following properties:
}
{
\begin{enumerate}
    \item \hl{All off-diagonal upper-triangle entries have positive values. In particular, a graph associated with the matrix is complete.}
    \item \hl{For large $p$, the variance of diagonal entries is large.
    In particular, difference $|Q_{i,i}-Q_{j,j}|$ between two diagonal entries tends to be large.}
\end{enumerate}
}
\hl{
From Property~1, a generated QUBO instance requires a number of additional variables for minor-embedding to a sparse hardware graph.
From Property~2, the number of pairs $(x_i, x_j)$ with $S_{i,j} \le 0$ is expected to be increased by increasing $p$.
On the basis of Theorem~\mbox{\ref{thm:sufficient_condition_valid_order}}, this indicates that large $p$ leads to a large number of ordered pairs of variables obtained by Algorithm~\mbox{\ref{alg:extract_order}}.
Moreover, from Property~1, all off-diagonal entries corresponding to ordered pairs are reduced to diagonal entries through linearization.
In summary, by setting $p$ larger, the proposed method is expected to have greater effects.

We argue the other parameters for the generation.
First, we consider the offset parameter $o=sp$ pushing the entries away from the origin.
Note that the expectation value of sum $\sum_{j=1}^{i-1} Q_{j,i} + \sum_{j=i+1}^n Q_{i,j}$ of interactions between a variable $x_i$ and the others is $0.5(n-1)(s+1)+(n-1)o$.
Suppose we take 
$o=0$ to generate QUBO instances.
If $p$ is larger than 0.5, the scale $p(n-1)(s+1)$ of coefficients of linear terms is larger than the above expected total interaction.
Then, for large $p$, values of several variables can be determined as 1 without solving, since a diagonal entry $Q_{i,i}$ is negatively too much larger than off-diagonal entries $Q_{i,j}, Q_{j,i}$.
To avoid such triviality of the problems, we set a positive offset $o=sp$.
Indeed, the roof-duality method (see Appendix~\mbox{\ref{app:review_preprocessing}}) could not reduce the size of generated QUBO instances.
}
Note that $o$ has no effect on the ordered pairs obtained by Algorithm~\mbox{\ref{alg:extract_order}}.
Next, we consider the other parameter $s$.
The parameter $s$ determines the whole scale of QUBO matrices and is considered to have no impact on effects of the proposed methods or on solvability of QUBO matrices (at least for $s$ that is not too small).
Thus, $s$ can be taken arbitrarily in the experiments.

\subsection{Full Results on MKPs}\label{app:full_mkp}

Tables~\ref{tab:mkp_best_value_case1_full} and \ref{tab:mkp_best_value_case2_full} present the full results including the averaged optimality gap (shown in ``Avg.'' column) on MKPs for Methods~1 and 2, respectively.
\hl{
The results of the number of feasible solutions and best optimality gap for Method~1 correspond to the results in the main text. 
}
We observe a similar trend of improving the averaged gap compared with the best gap.

\hl{
As noted in the main text,
on Method~1, 
the number of feasible solutions tends to slightly decrease by applying the proposed method.
We do not observe such an increase in infeasible solutions for Method~2.
The optimality gap for Method~2 is substantially reduced as well as that for Method~1 for large $|E|$.
In particular, the proposed method enables the Ising machine to reach the exact optimum on all instances of $m=1$ and $n=100$ with Method~2.
}

\section{Additional Experiments}

\begin{table*}[t]
  \caption{Effect of Linearization on Minor-Embedding.}
  \label{tab:king_random_num_order}
  \centering
  \begin{tabular}{cccrrr@{\hspace{3pt}}r r@{\hspace{3pt}}r r@{\hspace{3pt}}r}
    \hline
    $H$ & $n$ &&& $|E|$ & \multicolumn{2}{c}{${\rm OD}(Q^G)$}
    & \multicolumn{2}{c}{\# of Aux. Var.} & \multicolumn{2}{c}{Chain Length} \\
    \hline \hline
    \multirow{7}{*}{$KG_{256,256}$} &
    \multirow{7}{*}{257} &
    \multicolumn{2}{c}{clique} & - & 32896 & & 65279 & & 256 &\\
    &&     & 0.1 &     0.0 & 32896.0 & $(-0.0\%)$ & 65279.0 & $(-0.0\%)$ & 256.0 & $(-0.0\%)$ \\
    &&     & 0.2 &  2519.0 & 30376.9 & $(-7.7\%)$ & 65279.0 & $(-0.0\%)$ & 256.0 & $(-0.0\%)$ \\
    && $p$ & 0.5 & 16347.3 & 16548.7 & $(-49.7\%)$ & 65279.0 & $(-0.0\%)$ & 256.0 & $(-0.0\%)$ \\
    &&     & 1.0 & 23881.7 &  9014.3 & $(-72.6\%)$ & 65279.0 & $(-0.0\%)$ & 256.0 & $(-0.0\%)$ \\
    &&     & 1.5 & 26653.9 &  6242.1 & $(-72.6\%)$ & 65279.0 & $(-0.0\%)$ & 256.0 & $(-0.0\%)$ \\
    &&     & 2.0 & 28160.8 &  4735.2 & $(-85.6\%)$ & 65278.8 & $(-0.0\%)$ & 256.0 & $(-0.0\%)$ \\
    \hline
  \end{tabular}
\end{table*}

\begin{table*}[t]
  \caption{Energy of Solutions of Synthetic QUBO Problems.}
  \label{tab:king_random_ising_machine}
  \centering
  \begin{tabular}{cccrrr@{\hspace{3pt}}rr@{\hspace{3pt}}r}
    \hline
    &&& Without && \multicolumn{4}{c}{With Embedding}\\
    \cline{6-9}
    $H$ & $n$ & $p$ & Embedding & & 
    \multicolumn{1}{c}{Baseline} & \multicolumn{1}{c}{Diff.} &
    \multicolumn{1}{c}{Linearized} & \multicolumn{1}{c}{Diff.} \\
    \hline \hline
    \multirow{5}{*}{$KG_{256,256}$} &
    \multirow{5}{*}{257}
     & 0.2 & $ -16991.0$ &  & $  -3636.6$ &  ($+13354.4$) & $  -9260.7$ & ($+7730.3$) \\ 
    && 0.5 & $ -63406.1$ &  & $ -52692.8$ &  ($+10713.3$) & $ -60251.9$ & ($+3154.2$) \\ 
    && 1.0 & $-151207.4$ &  & $-143336.2$ &   ($+7871.2$) & $-145167.8$ & ($+6039.6$) \\ 
    && 1.5 & $-243713.6$ &  & $-236933.5$ &   ($+6780.1$) & $-240049.8$ & ($+3663.8$) \\ 
    && 2.0 & $-336410.5$ &  & $-329740.5$ &   ($+6670.0$) & $-333827.0$ & ($+2583.5$) \\ 
    \hline
  \end{tabular}
\end{table*}

\subsection{Minor-Embedding to King's Graph}\label{app:experiment_on_king_graph}

We conduct the first and second experiments in the main text setting the target hardware graph as a King's graph, which is adopted for the current commercial version of Hitachi CMOS Annealer~\cite{yamaoka201520k,sugie2021minor}.
Vertices in a King's graph are aligned in a rectangular shape and connected to other vertices that are in horizontally, vertically, or diagonally adjacent positions.
For a square King's graph $KG_{L,L}$ of size $L \times L$, a clique embedding of a complete graph of size $L+1$ is known~\cite{sugie2021minor}.
Since the Ising machine (AE) in our experiments accepts QUBO problems of a size up to 65536, we set the size of the King's graph to $256 \times 256 = 65536$ and use $n=257$ to generate synthetic QUBO instances.
The experimental setting and metrics are the same as in the main text except for the target graph and $n$.

Table~\ref{tab:king_random_num_order} shows the evaluation results of effects of the proposed method on minor-embedding.
In contrast to the cases of $P_{16}$ and $Z_{15}$ in the main text, the number of auxiliary variables and maximum chain length are not reduced even on $p=1.5$ and $p=2.0$.
This is probably because the minorminer algorithm is not suitable for searching for a good minor-embedding on the King's graph.
As we mentioned in Section~\ref{sec:experiments} in the main text, performance of minor-embedding search can have a significant impact on this experiment.
We hypothesize that improvement in the quality of minor-embedding is observed for $p=1.5$ and $p=2.0$ when we use a search algorithm suitable for the King's graph such as probabilistic-swap-shift-annealing (PSSA)~\cite{sugie2021minor}.
However, since additional experiments in Appendix~\ref{app:clique_linear} suggest that the performance improvement of Ising machine in our experiment might not be relevant to the quality of minor-embedding, we do not delve into further experiments using other search algorithms.

Table~\ref{tab:king_random_ising_machine} shows the performance of the Ising machine on the original and minor-embedded synthetic QUBO problems.
We observed that several chains in solutions of embedded problems are broken, so we adopt the majority decision heuristic for fixing chains to compute energy for the embedded problems.
We observed similar trends as in the main text:
the energies for baselines are much higher than those for the original QUBO problems, and the linearization substantially mitigates the degradation.
Note that since the minor-embedding for the linearized problems is almost the same as that for the baselines (from Table~\ref{tab:king_random_num_order}), the mitigation apparently comes from factors other than the quality of minor-embedding.

Fig.~\ref{fig:embeddability_king} shows the results of the second experiment for the King's graph $KG_{256,256}$.
In contrast to the cases of $P_{16}$ and $Z_{15}$, we do not observe significant improvement in embeddability for $KG_{256,256}$, as no instances of size 260 other than only two with $p=2$ can be embedded to $KG_{256,256}$ even with linearization.
We again hypothesize that this is due to low performance of minor-embedding search and that using other algorithms such as PSSA leads to improving embeddability of linearized problems.

\begin{figure}[t]
     \centering
         \includegraphics[width=0.48\textwidth]{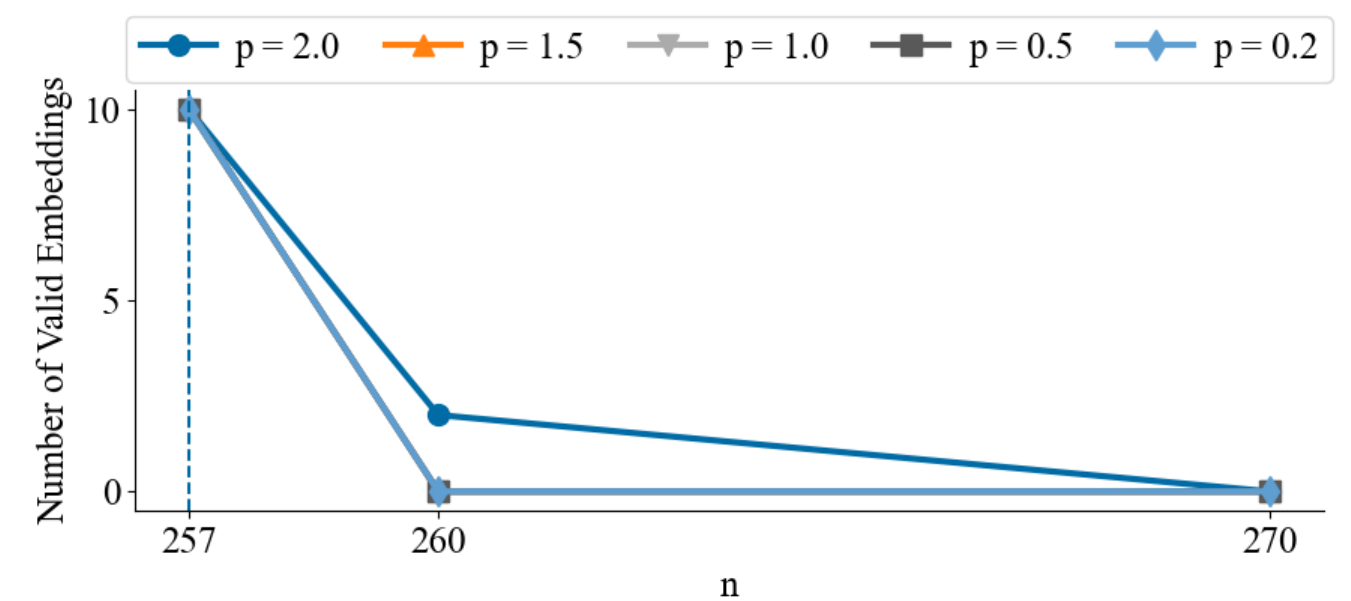}
         \caption{Success counts of finding minor-embedding of linearized QUBO problems to $K_{256,256}.$}
         \label{fig:embeddability_king}
\end{figure}

\begin{table*}[t]
  \caption{Energy of Solutions of Synthetic QUBO Problems for Fixed Embedding.}
  \label{tab:clique_linear_energy}
  \centering
  \begin{tabular}{cccrr@{\hspace{0pt}}r@{\hspace{3pt}}rr@{\hspace{3pt}}rcrr}
    \hline
    &&& \multicolumn{1}{c}{Without} && \multicolumn{4}{c}{With Embedding (Re-print)} 
    \\
    &&& \multicolumn{1}{c}{Embedding} && \multicolumn{2}{c}{Clique Embedding} & \multicolumn{2}{c}{Minorminer} 
    && \multicolumn{2}{c}{Clique Embedding} \\
    \cline{6-9}
    \cline{11-12}
    $H$ 
    & $n$ 
    & $p$ & \multicolumn{1}{c}{(Re-print)} & & 
    \multicolumn{1}{c}{Baseline} & \multicolumn{1}{c}{Diff.} &
    \multicolumn{1}{c}{Linearized} & \multicolumn{1}{c}{Diff.} 
    && \multicolumn{1}{c}{Linearized} & \multicolumn{1}{c}{Diff.} 
    \\
    \hline \hline
    \multirow{2}{*}{$
    \begin{matrix}
        P_{16}    
    \end{matrix}
    $} &
    \multirow{2}{*}{180} 
    & 1.5 & $-120531.9$ &  & $-120531.2$ &   ($+0.7$) & $-120531.9$ &  ($+0.0$)  && $-120531.9$ & ($+0.0$) \\
    && 2.0 & $-165226.1$ &  & $-165224.9$ &   ($+1.2$) & $-165226.1$ &  ($+0.0$) && $-165226.1$ & ($+0.0$) \\
    \hline
    \multirow{2}{*}{$
    \begin{matrix}
        Z_{15}
    \end{matrix}
    $} &
    \multirow{2}{*}{232}
    & 1.5 & $-196882.1$ &  & $-196872.1$ &   ($+10.0$) & $-196881.5$ &   ($+0.6$)  && $-196882.1$ & ($+0.0$) \\
    && 2.0 & $-272778.4$ &  & $-272763.6$ &   ($+14.8$) & $-272778.4$ &   ($+0.0$) && $-272778.4$ & ($+0.0$) \\
    \hline
  \end{tabular}
\end{table*}

\subsection{Energy for Linearized Problems with Fixed Embedding}\label{app:clique_linear}

The experimental results (Tables~\ref{tab:random_num_order} and \ref{tab:random_ising_machine}) in the main text suggest that 
the performance improvement of Ising machine on the minor-embedded linearized problems might not be relevant to the quality of minor-embedding.
To confirm this, we evaluate Ising machine performance on linearized problems embedded with a \emph{fixed} embedding.
If we see similar performance improvement against the baselines in this setting, it serves as evidence for the irrelevance of the minor-embedding quality to solving performance for the synthetic QUBO instances.
Similarly as in the main text, we sample 10 solutions of the linearized QUBO problems embedded with clique embedding for each combination of $p\in \{1.5, 2.0\}$ (for which the quality improvement of minor-embedding was observed) and $n\in \{180, 232\}$ and report the averaged energies.

Table~\ref{tab:clique_linear_energy} summarizes the results.
We re-print the results in the main text in the ``Without Embedding'' and ``With Embedding'' columns for comparison.
The energies for linearized problems with clique embedding achieve the same values as those of the original problems without embedding.
The performance improvement is similar to that for linearized problems with minor-embedding obtained by the minorminer algorithm.
Therefore, in our experimental settings, it seems that the performance improvement of the Ising machine does not come from the difference in mapping of minor-embedding.

\begin{figure*}[t]
     \centering
         \includegraphics[width=0.92\textwidth]{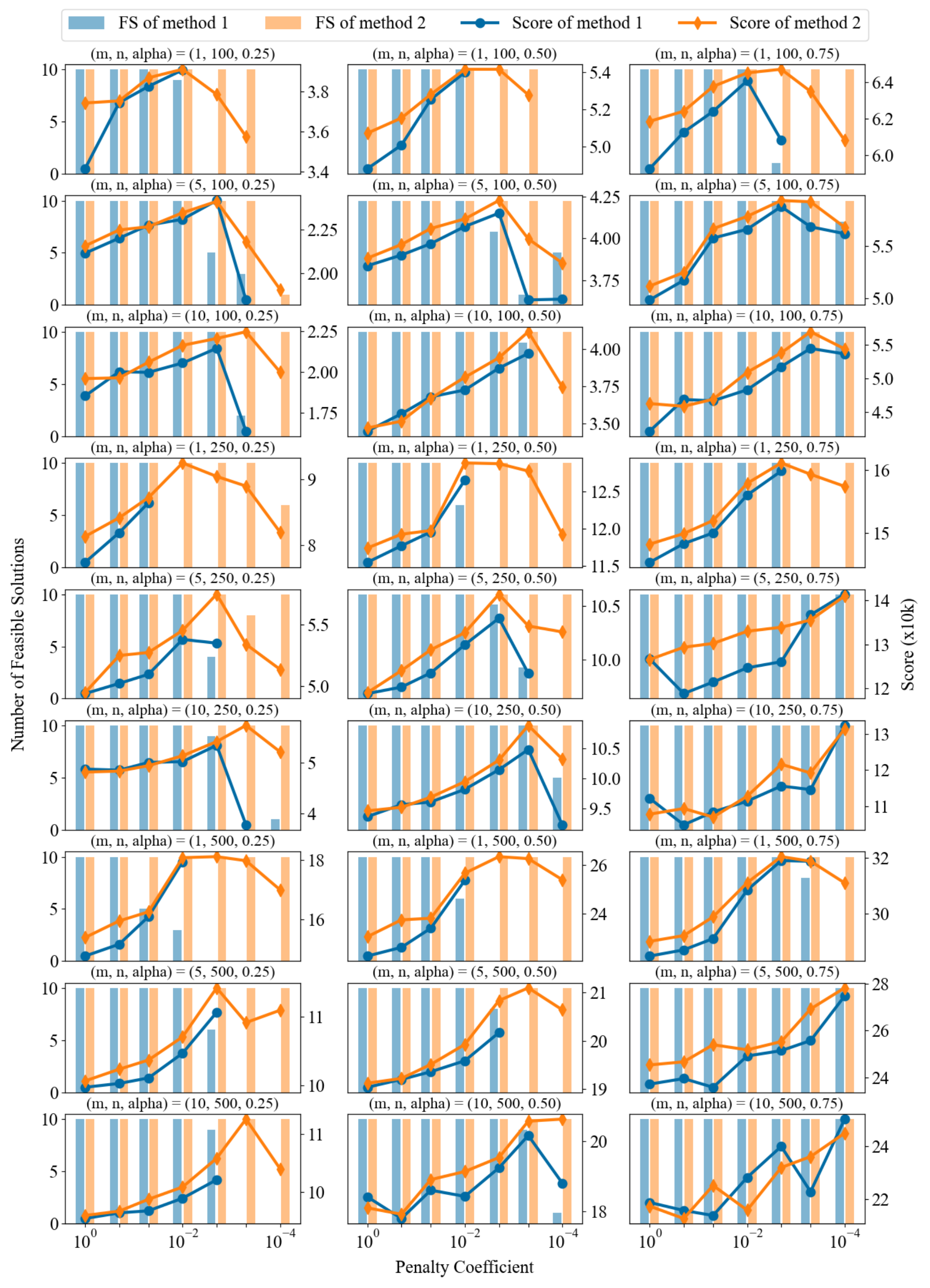}
         \caption{Results on tuning penalty coefficient $\lambda$ on MKPs. Bar charts represent numbers of feasible solutions and lines represent scores. As $\lambda$ becomes smaller, numbers of feasible solutions tend to decrease and scores tend to increase (as long as sufficient number of feasible solutions are obtained).}
         \label{fig:tuning_on_mkp}
\end{figure*}

\begin{table*}[t]
\begin{minipage}[t]{\textwidth}
    \caption{Optimality Gap on MKPs for Baseline and Proposed Method Based on Method 1 with Tuning on $\lambda$.}
  \label{tab:mkp_tuned_best_value_case1}
  \centering
  \begin{tabular}{c@{\hspace{3pt}}c@{\hspace{4pt}}clrrrrrrrr@{\hspace{2pt}}rrr@{\hspace{2pt}}r}
    \hline
    & Instance & & & \multicolumn{3}{c}{Baseline} & & \multicolumn{8}{c}{Linearized}
    \\
    \cline{1-3}
    \cline{5-7}
    \cline{9-16}
    $m$ & $n$ & $\alpha$ 
    & \multicolumn{1}{c}{$\lambda$} & \multicolumn{1}{c}{\#FS} & Avg. & Best 
    & & \multicolumn{1}{c}{$|E|$} & \multicolumn{1}{c}{\#FS} & Avg. & \multicolumn{2}{c}{\hl{RR (\%)}} & Best  & \multicolumn{2}{c}{\hl{RR (\%)}} 
    \\ 
    \hline \hline
 1 & 100 & 0.25 & 0.01   & 42.2 &  2.037&  0.031 &&  2013.6 &45.9 &  0.982 & \hl{$-50.8 \pm $}&\hl{$ 39.9$ } & 0.013 & \hl{$ -100.0 \pm $}&\hl{$ 0.0 $} \\
   &     & 0.50 & 0.01   & 45.5 &  1.393&  0.117 &&  2033.0 &42.9 &  4.580 & \hl{$+275.1 \pm $}&\hl{$ 453.1$ } & 0.045 & \hl{$ -67.1 \pm $}&\hl{$ 50.1 $} \\
   &     & 0.75 & 0.01   & 48.6 &  3.900&  0.410 &&  1973.0 &45.4 &  7.620 & \hl{$+ 85.7 \pm $}&\hl{$ 112.4$ } & 0.047 & \hl{$ -87.6 \pm $}&\hl{$ 15.6 $} \\
 1 & 250 & 0.25 & 0.05   & 47.3 & 10.739&  7.370 && 12529.9 &48.5 &  1.310 & \hl{$-87.6 \pm $}&\hl{$ 6.4$ } & 0.015 & \hl{$ -99.8 \pm $}&\hl{$ 0.3 $} \\
   &     & 0.50 & 0.01   & 31.9 &  4.474&  1.697 && 12731.2 &19.4 &  3.848 & \hl{$ -6.4 \pm $}&\hl{$ 135.4$ } & 0.060 & \hl{$ -96.0 \pm $}&\hl{$ 5.3 $} \\
   &     & 0.75 & 0.002  & 47.3 &  2.154&  0.833 && 12552.8 &36.3 &  1.564 & \hl{$-27.0 \pm $}&\hl{$ 7.2$ } & 0.255 & \hl{$ -59.3 \pm $}&\hl{$ 38.1 $} \\
 1 & 500 & 0.25 & 0.01   & 7.1 &  4.824&  2.663 && 51922.9 &3.7 &  0.880 & \hl{$-81.2 \pm $}&\hl{$ 11.2$ } & 0.321 & \hl{$ -87.6 \pm $}&\hl{$ 6.6 $} \\
   &     & 0.50 & 0.01   & 30.2 &  5.970&  3.740 && 51008.0 &28.4 &  0.784 & \hl{$-86.5 \pm $}&\hl{$ 9.7$ } & 0.064 & \hl{$ -98.3 \pm $}&\hl{$ 0.5 $} \\
   &     & 0.75 & 0.002  & 49.8 &  2.112&  0.599 && 50326.0 &49.9 &  1.393 & \hl{$-33.8 \pm $}&\hl{$ 7.8$ } & 0.159 & \hl{$ -73.5 \pm $}&\hl{$ 11.4 $} \\
 5 & 100 & 0.25 & 0.002  & 29.1 & 10.722&  1.570 &&    23.8 &30.2 & 11.502 & \hl{$+  7.4 \pm $}&\hl{$ 15.8$ } & 1.436 & \hl{$ +  8.0 \pm $}&\hl{$ 74.6 $} \\
   &     & 0.50 & 0.002  & 39.4 & 12.351&  1.697 &&    29.5 &37.9 & 11.784 & \hl{$ -2.8 \pm $}&\hl{$ 14.8$ } & 1.728 & \hl{$ + 17.0 \pm $}&\hl{$ 71.8 $} \\
   &     & 0.75 & 0.002  & 50.0 &  7.378&  1.377 &&    26.7 &50.0 &  7.838 & \hl{$+  7.0 \pm $}&\hl{$ 29.0$ } & 1.180 & \hl{$  -9.2 \pm $}&\hl{$ 29.7 $} \\
 5 & 250 & 0.25 & 0.01   & 49.7 & 11.871&  8.954 &&   156.7 &49.8 &  9.531 & \hl{$-19.7 \pm $}&\hl{$ 3.7$ } & 7.291 & \hl{$ -18.4 \pm $}&\hl{$ 6.5 $} \\
   &     & 0.50 & 0.002  & 47.0 & 13.450&  3.730 &&   143.4 &45.4 & 10.531 & \hl{$-21.1 \pm $}&\hl{$ 14.7$ } & 2.987 & \hl{$ -19.4 \pm $}&\hl{$ 11.6 $} \\
   &     & 0.75 & 0.0001 & 43.9 &  7.616&  4.496 &&   148.4 &44.6 &  7.812 & \hl{$+  3.2 \pm $}&\hl{$ 10.3$ } & 4.419 & \hl{$  -0.2 \pm $}&\hl{$ 18.0 $} \\
 5 & 500 & 0.25 & 0.002  & 9.7 & 13.070&  7.315 &&   665.7 &11.5 & 11.017 & \hl{$-14.8 \pm $}&\hl{$ 16.0$ } & 5.594 & \hl{$ -22.0 \pm $}&\hl{$ 12.3 $} \\
   &     & 0.50 & 0.002  & 45.5 & 11.146&  6.500 &&   610.0 &48.8 & 10.370 & \hl{$ -6.6 \pm $}&\hl{$ 16.3$ } & 4.916 & \hl{$ -24.1 \pm $}&\hl{$ 6.2 $} \\
   &     & 0.75 & 0.0001 & 50.0 & 10.645&  5.129 &&   643.3 &50.0 & 10.458 & \hl{$ -1.7 \pm $}&\hl{$ 3.6$ } & 5.114 & \hl{$ +  1.8 \pm $}&\hl{$ 29.6 $} \\
10 & 100 & 0.25 & 0.002  & 48.2 & 19.109&  5.798 &&     0.8 &47.8 & 19.975 & \hl{$+  5.0 \pm $}&\hl{$ 8.4$ } & 5.377 & \hl{$  -6.2 \pm $}&\hl{$ 17.5 $} \\
   &     & 0.50 & 0.0005 & 39.1 & 23.905&  9.667 &&     1.2 &39.7 & 23.010 & \hl{$ -2.9 \pm $}&\hl{$ 11.6$ } & 5.644 & \hl{$ -39.0 \pm $}&\hl{$ 16.7 $} \\
   &     & 0.75 & 0.0005 & 50.0 & 14.350&  2.402 &&     0.6 &50.0 & 15.067 & \hl{$+  5.4 \pm $}&\hl{$ 8.3$ } & 2.833 & \hl{$ + 47.1 \pm $}&\hl{$ 83.3 $} \\
10 & 250 & 0.25 & 0.002  & 43.4 & 14.436&  9.411 &&     5.2 &44.2 & 14.667 & \hl{$+  1.8 \pm $}&\hl{$ 6.8$ } & 9.640 & \hl{$ +  2.5 \pm $}&\hl{$ 5.5 $} \\
   &     & 0.50 & 0.0005 & 45.7 & 19.591&  5.936 &&     4.0 &46.2 & 18.312 & \hl{$ -6.6 \pm $}&\hl{$ 8.1$ } & 4.296 & \hl{$ -20.9 \pm $}&\hl{$ 37.4 $} \\
   &     & 0.75 & 0.0001 & 50.0 & 18.940& 11.970 &&     2.9 &50.0 & 19.008 & \hl{$+  0.4 \pm $}&\hl{$ 4.2$ } & 9.114 & \hl{$ -23.2 \pm $}&\hl{$ 20.1 $} \\
10 & 500 & 0.25 & 0.002  & 44.7 & 16.460& 12.200 &&    13.1 &44.9 & 16.324 & \hl{$ -0.7 \pm $}&\hl{$ 3.4$ } &12.593 & \hl{$ +  3.3 \pm $}&\hl{$ 5.4 $} \\
   &     & 0.50 & 0.0005 & 47.6 & 12.531&  6.261 &&    15.0 &47.9 & 13.194 & \hl{$+  7.2 \pm $}&\hl{$ 22.7$ } & 5.983 & \hl{$  -3.2 \pm $}&\hl{$ 15.8 $} \\
   &     & 0.75 & 0.0001 & 50.0 & 22.006& 15.918 &&    14.5 &50.0 & 22.235 & \hl{$+  1.1 \pm $}&\hl{$ 2.9$ } &14.974 & \hl{$  -5.8 \pm $}&\hl{$ 10.1 $} \\
    \hline
  \end{tabular}
\end{minipage}
\bigskip
\begin{minipage}[t]{\textwidth}

  \caption{Optimality Gap on MKPs for Baseline and Proposed Method Based on Method 2 with Tuning on $\lambda$.}
  \label{tab:mkp_tuned_best_value_case2}
  \centering
  \begin{tabular}{c@{\hspace{3pt}}c@{\hspace{4pt}}clrrrrrrrr@{\hspace{2pt}}rrr@{\hspace{2pt}}r}
    \hline
    & Instance & & & \multicolumn{3}{c}{Baseline} & & \multicolumn{8}{c}{Linearized}
    \\
    \cline{1-3}
    \cline{5-7}
    \cline{9-16}
    $m$ & $n$ & $\alpha$ 
    & \multicolumn{1}{c}{$\lambda$} & \multicolumn{1}{c}{\#FS} & Avg. & Best 
    & & \multicolumn{1}{c}{$|E|$} & \multicolumn{1}{c}{\#FS} & Avg. & \multicolumn{2}{c}{\hl{RR (\%)}} & Best  & \multicolumn{2}{c}{\hl{RR (\%)}}
    \\ 
    \hline \hline
 1 & 100 & 0.25 & 0.01   & 50.0 &  0.054&  0.000 &&  2013.6 &50.0 &  0.000 & -& & 0.000 & -& \\
   &     & 0.50 & 0.01   & 50.0 &  0.279&  0.006 &&  2033.0 &50.0 &  0.000 & \hl{$-100.0 \pm $}&\hl{$ 0.0$ } & 0.000 & \hl{$ -100.0 \pm $}&\hl{$ 0.0 $} \\
   &     & 0.75 & 0.002  & 50.0 &  0.231&  0.012 &&  1973.0 &50.0 &  0.009 & \hl{$-89.9 \pm $}&\hl{$ 19.6$ } & 0.000 & \hl{$ -100.0 \pm $}&\hl{$ 0.0 $} \\
 1 & 250 & 0.25 & 0.01   & 50.0 &  0.872&  0.246 && 12529.9 &50.0 &  0.015 & \hl{$-98.1 \pm $}&\hl{$ 1.0$ } & 0.000 & \hl{$ -100.0 \pm $}&\hl{$ 0.0 $} \\
   &     & 0.50 & 0.01   & 50.0 &  1.920&  1.167 && 12731.2 &50.0 &  0.031 & \hl{$-98.4 \pm $}&\hl{$ 0.6$ } & 0.003 & \hl{$ -99.7 \pm $}&\hl{$ 0.4 $} \\
   &     & 0.75 & 0.002  & 50.0 &  0.846&  0.104 && 12552.8 &50.0 &  0.289 & \hl{$-64.9 \pm $}&\hl{$ 21.2$ } & 0.002 & \hl{$ -97.4 \pm $}&\hl{$ 6.4 $} \\
 1 & 500 & 0.25 & 0.002  & 50.0 &  1.526&  1.418 && 51922.9 &48.7 &  2.501 & \hl{$+ 64.0 \pm $}&\hl{$ 9.3$ } & 1.045 & \hl{$ -26.3 \pm $}&\hl{$ 15.2 $} \\
   &     & 0.50 & 0.002  & 50.0 &  1.122&  0.936 && 51008.0 &50.0 &  0.735 & \hl{$-34.4 \pm $}&\hl{$ 8.7$ } & 0.080 & \hl{$ -91.4 \pm $}&\hl{$ 5.1 $} \\
   &     & 0.75 & 0.002  & 50.0 &  1.441&  0.223 && 50326.0 &50.0 &  2.392 & \hl{$+ 70.0 \pm $}&\hl{$ 52.0$ } & 0.135 & \hl{$ -18.3 \pm $}&\hl{$ 65.9 $} \\
 5 & 100 & 0.25 & 0.002  & 50.0 &  2.377&  0.595 &&    23.8 &50.0 &  2.273 & \hl{$ -4.3 \pm $}&\hl{$ 4.9$ } & 0.539 & \hl{$  -3.1 \pm $}&\hl{$ 40.9 $} \\
   &     & 0.50 & 0.002  & 50.0 &  1.861&  0.885 &&    29.5 &50.0 &  1.719 & \hl{$ -7.6 \pm $}&\hl{$ 7.2$ } & 0.662 & \hl{$ -22.0 \pm $}&\hl{$ 30.8 $} \\
   &     & 0.75 & 0.002  & 50.0 &  1.817&  0.659 &&    26.7 &50.0 &  1.369 & \hl{$-23.8 \pm $}&\hl{$ 9.4$ } & 0.454 & \hl{$ -28.1 \pm $}&\hl{$ 28.0 $} \\
 5 & 250 & 0.25 & 0.002  & 50.0 &  4.305&  2.859 &&   156.7 &50.0 &  3.868 & \hl{$-10.0 \pm $}&\hl{$ 4.2$ } & 2.401 & \hl{$ -15.6 \pm $}&\hl{$ 12.7 $} \\
   &     & 0.50 & 0.002  & 50.0 &  3.534&  2.603 &&   143.4 &50.0 &  2.879 & \hl{$-18.4 \pm $}&\hl{$ 4.2$ } & 2.013 & \hl{$ -22.5 \pm $}&\hl{$ 5.6 $} \\
   &     & 0.75 & 0.0001 & 50.0 &  6.042&  3.586 &&   148.4 &50.0 &  5.890 & \hl{$ -2.3 \pm $}&\hl{$ 5.4$ } & 3.586 & \hl{$ +  0.7 \pm $}&\hl{$ 11.9 $} \\
 5 & 500 & 0.25 & 0.002  & 50.0 &  6.162&  4.802 &&   665.7 &50.0 &  4.929 & \hl{$-20.0 \pm $}&\hl{$ 3.1$ } & 3.938 & \hl{$ -17.9 \pm $}&\hl{$ 5.8 $} \\
   &     & 0.50 & 0.0005 & 50.0 &  4.289&  2.411 &&   610.0 &50.0 &  3.780 & \hl{$-11.8 \pm $}&\hl{$ 4.7$ } & 2.328 & \hl{$  -3.0 \pm $}&\hl{$ 16.3 $} \\
   &     & 0.75 & 0.0001 & 50.0 &  8.935&  4.039 &&   643.3 &50.0 & 11.838 & \hl{$+ 33.0 \pm $}&\hl{$ 10.7$ } & 5.633 & \hl{$ + 45.7 \pm $}&\hl{$ 41.1 $} \\
10 & 100 & 0.25 & 0.0005 & 50.0 &  8.438&  2.418 &&     0.8 &50.0 &  8.198 & \hl{$ -2.9 \pm $}&\hl{$ 4.3$ } & 1.795 & \hl{$ + 26.0 \pm $}&\hl{$ 140.2 $} \\
   &     & 0.50 & 0.0005 & 50.0 &  3.955&  1.176 &&     1.2 &50.0 &  3.846 & \hl{$ -2.8 \pm $}&\hl{$ 6.8$ } & 1.314 & \hl{$ + 34.4 \pm $}&\hl{$ 80.7 $} \\
   &     & 0.75 & 0.0005 & 50.0 &  2.254&  0.643 &&     0.6 &50.0 &  2.241 & \hl{$ -1.0 \pm $}&\hl{$ 9.6$ } & 0.651 & \hl{$ +  3.0 \pm $}&\hl{$ 17.9 $} \\
10 & 250 & 0.25 & 0.0005 & 50.0 &  7.805&  2.794 &&     5.2 &50.0 &  7.813 & \hl{$+  0.3 \pm $}&\hl{$ 6.3$ } & 3.100 & \hl{$ + 21.9 \pm $}&\hl{$ 51.8 $} \\
   &     & 0.50 & 0.0005 & 50.0 &  3.792&  1.992 &&     4.0 &50.0 &  3.827 & \hl{$+  1.1 \pm $}&\hl{$ 4.6$ } & 1.925 & \hl{$  -2.9 \pm $}&\hl{$ 11.3 $} \\
   &     & 0.75 & 0.0001 & 50.0 & 16.110&  6.666 &&     2.9 &50.0 & 15.663 & \hl{$ -2.8 \pm $}&\hl{$ 3.1$ } & 4.835 & \hl{$  -5.8 \pm $}&\hl{$ 74.9 $} \\
10 & 500 & 0.25 & 0.0005 & 50.0 &  7.504&  3.628 &&    14.2 &50.0 &  7.104 & \hl{$ -5.1 \pm $}&\hl{$ 4.4$ } & 3.783 & \hl{$ +  5.6 \pm $}&\hl{$ 18.5 $} \\
   &     & 0.50 & 0.0001 & 50.0 &  6.453&  4.293 &&    15.0 &50.0 &  5.632 & \hl{$-12.7 \pm $}&\hl{$ 3.7$ } & 4.053 & \hl{$  -4.5 \pm $}&\hl{$ 14.8 $} \\
   &     & 0.75 & 0.0001 & 50.0 & 21.332& 14.636 &&    14.5 &50.0 & 21.727 & \hl{$+  1.9 \pm $}&\hl{$ 1.7$ } &15.785 & \hl{$ +  9.7 \pm $}&\hl{$ 18.0 $} \\
    \hline
  \end{tabular}
\end{minipage}
\end{table*}

\subsection{Experiments on MKPs with Tuning Penalty Coefficient}\label{app:experiment_tuning}

We evaluate the proposed method on MKPs against stronger baselines on the basis of tuning penalty coefficients. 

When applying an Ising machine to constrained optimization problems, feasible solutions are not necessarily obtained.
It is favorable to obtaining feasible solutions with sufficiently high probability while achieving high optimization scores.
Performance of the Ising machines in this aspect mostly depends on the value of penalty coefficients for the constraints~\cite{verma2022penalty,yarkoni2022quantum}.
On MKPs, $\lambda$ in Eq.~(\ref{eq:mkp_qubo}) should be taken sufficiently large for constraints on solutions to be satisfied.
On the other hand, large $\lambda$ tends to degrade the objective score.
To tune baselines with respect to this aspect, we take one instance from each combination of $(m,n, \alpha)$ and apply the Ising machine to it 10 times for each $\lambda \in \{1, 0.2, 0.05, 0.01, 0.002, 0.0005, 0.0001\}$.
We report the number of feasibility solutions and best optimization scores $\sum_i v_i x_i$ over feasible solutions.

The tuning results are shown in Fig.~\ref{fig:tuning_on_mkp}.
As expected, small $\lambda$ leads to a decrease in the number of feasible solutions and higher optimization scores. 
When $\lambda$ is too small to obtain a sufficient number of feasible solutions, variance of scores increases, so the best scores tend to decrease.
For Method~2, there are several cases where the scores decrease while the numbers of solutions do not.
This is probably due to the function specification of \texttt{less\_than()}. 

We conduct the third experiment in the main text again with setting $\lambda$ to the best parameter achieving the highest best score in the tuning experiment.
Note that the proposed method is evaluated with the same value of $\lambda$ as that of the tuned baselines.
The results are shown in Tables~\ref{tab:mkp_tuned_best_value_case1} and \ref{tab:mkp_tuned_best_value_case2}.
We observe that the proposed method improves the average and best gap in most cases where $|E|$ is larger than a hundred, even on the strong tuned baselines.
One exception is the case of $m=5$, $n=500$, and $\alpha=0.75$ for Method~2, where the solutions with linearization have a more than 1.3x average or best gap than the baseline.
The cause of the phenomenon is difficult to analyze as the specifications of \texttt{less\_than()} function are undisclosed.
Overall, the proposed method generally improves performance of the Ising machine on MKPs even on the well-tuned setting.

\section{\hl{Experiments on Max-Cut/QUBO dataset}}\label{app:experiments_mqlib}

\begin{table*}[t]
      \caption{\hl{Summary on MQLib~\mbox{\cite{dunning2018works}} Instances.}}
  \label{tab:mqlib}
  \centering
  \begin{tabular}{crrrrr}
    \hline
    \hlrow Dataset   & \# of instances & \# of nodes & Density (\%) & Valid order found & Before preprocessing \\
    \hline \hline
    \hlrow Gset      & 17   & 5k-20k & 0.0-0.2 & 0 & (3)\\
    \hlrow Beasley   & 60   & 0k-3k  & 9.9-14.7 & 0 & (0)\\
    \hlrow Culberson & 108  & 1k-5k  & 0.1-98.7 & 0 & (0)\\
    \hlrow Imgseg    & 100  & 1k-28k & 0.0-0.2 & 0 & (0) \\
    \hlrow GKA       & 45   & 21-501 &  8.0-99.0 & 0 & (0) \\
    \hlrow p3-7      & 21   & 3k-7k & 49.8-99.5 & 0 & (0) \\
    \hlrow Others    & 3269 & 0k-38k & 0.0-100 & 24 & (106)\\
    \hline
  \end{tabular}
\end{table*}

\begin{table*}[t]
      \caption{\hl{Results of Solutions of Max-Cut on MQLib Instances.}}
  \label{tab:mqlib_energy_without_embedding}
  \centering
  \begin{tabular}{crrrrrrr}
    \hline
 \hlrow Instance & Domain & \# of nodes & \# of edges & $|E|$ & Cut (Baseline)  & Cut \\
\hline
    \hlrow g000064 & PCST & 3624 & 28964 & 499 &  144608.22 $\pm$ 185.46 & 144519.57 $\pm$ 217.71 \\
    \hlrow g000222 & Planer & 5347 & 7634 & 7 &  121187.60 $\pm$ 69.73 & 121178.10 $\pm$ 63.31 \\
    \hlrow g000304 & Isom. & 34 & 153 & 52 &  98.00 $\pm$ 0.00 & 98.00 $\pm$ 0.00 \\
    \hlrow g000496 & Coloring & 138 & 3925 & 55 &  2540.00 $\pm$ 0.00 & 2540.00 $\pm$ 0.00 \\
    \hlrow g000577 & Coloring & 363 & 8688 & 76 &  7215.00 $\pm$ 0.00 & 7215.00 $\pm$ 0.00 \\
    \hlrow g000599 & Planer & 3 & 3 & 3 &  372.00 $\pm$ 0.00 & 372.00 $\pm$ 0.00 \\
    \hlrow g000655 & Isom. & 34 & 153 & 52 &  98.00 $\pm$ 0.00 & 98.00 $\pm$ 0.00 \\
    \hlrow g000742 & Coloring & 173 & 3885 & 52 &  3006.00 $\pm$ 0.00 & 3006.00 $\pm$ 0.00 \\
    \hlrow g000812 & Coloring & 558 & 13979 & 142 &  12082.00 $\pm$ 0.00 & 12082.00 $\pm$ 0.00 \\
    \hlrow g001244 & PCST & 5226 & 93394 & 3724 &  212334.94 $\pm$ 76.14 & 212409.04 $\pm$ 107.44 \\
    \hlrow g001328 & Coloring & 363 & 8691 & 77 &  7215.00 $\pm$ 0.00 & 7215.00 $\pm$ 0.00 \\
    \hlrow g001371 & Isom. & 34 & 153 & 52 &  98.00 $\pm$ 0.00 & 98.00 $\pm$ 0.00 \\
    \hlrow g001634 & Isom. & 34 & 153 & 52 &  98.00 $\pm$ 0.00 & 98.00 $\pm$ 0.00 \\
    \hlrow g001860 & Isom. & 77 & 231 & 71 &  154.00 $\pm$ 0.00 & 154.00 $\pm$ 0.00 \\
    \hlrow g001933 & Coloring & 559 & 13969 & 135 &  12108.00 $\pm$ 0.00 & 12108.00 $\pm$ 0.00 \\
    \hlrow g002031 & Isom. & 34 & 153 & 50 &  97.00 $\pm$ 0.00 & 97.00 $\pm$ 0.00 \\
    \hlrow g002294 & Coloring & 126 & 4100 & 91 &  2779.00 $\pm$ 0.00 & 2779.00 $\pm$ 0.00 \\
    \hlrow g002510 & Coloring & 269 & 11654 & 142 &  8689.00 $\pm$ 0.00 & 8689.00 $\pm$ 0.00 \\
    \hlrow g002739 & Coloring & 519 & 18707 & 287 &  15316.00 $\pm$ 0.00 & 15316.00 $\pm$ 0.00 \\
    \hlrow g002756 & Isom. & 77 & 231 & 70 &  154.00 $\pm$ 0.00 & 154.00 $\pm$ 0.00 \\
    \hlrow g002783 & Isom. & 34 & 153 & 50 &  97.00 $\pm$ 0.00 & 97.00 $\pm$ 0.00 \\
    \hlrow g003074 & Coloring & 175 & 3946 & 50 &  3060.00 $\pm$ 0.00 & 3060.00 $\pm$ 0.00 \\
    \hlrow g003084 & PCST & 5226 & 93394 & 3724 &  234235.59 $\pm$ 101.66 & 234246.89 $\pm$ 113.92 \\
    \hlrow g003107 & PCST & 5226 & 93394 & 3724 &  321020.44 $\pm$ 124.71 & 321048.44 $\pm$ 153.99 \\
    \hline
  \end{tabular}
\end{table*}

\begin{table*}[t]
      \caption{\hl{Results of Minor-Embedding on MQLib.}}
  \label{tab:embedding_mqlib}
  \centering
  \begin{tabular}{@{\hspace{2pt}}c@{\hspace{6pt}} r r@{\hspace{5pt}}r@{\hspace{7pt}}r@{\hspace{5pt}} r r@{\hspace{5pt}}r@{\hspace{7pt}}r@{\hspace{5pt}} r r@{\hspace{5pt}}r@{\hspace{7pt}}r@{\hspace{5pt}} r r@{\hspace{5pt}}r@{\hspace{7pt}}r@{\hspace{2pt}}}
    \hline
    \hlrow && \multicolumn{7}{c}{$Z_{15}$} && \multicolumn{7}{c}{$P_{16}$}  \\
    \cline{3-9}
    \cline{11-17}    
    \hlrow && \multicolumn{3}{c}{Baseline} && \multicolumn{3}{c}{Linearized} && \multicolumn{3}{c}{Baseline} && \multicolumn{3}{c}{Linearized} \\
    \cline{3-5}
    \cline{7-9}
    \cline{11-13}
    \cline{15-17}
 \hlrow &&  Aux. & Chain &&& Aux. & Chain &&& Aux. & Chain &&& Aux. & Chain &\\
 \hlrow Instance &&  Var. & Length & Cut && Var. & Length & Cut && Var. & Length & Cut && Var. & Length & Cut \\
\hline
   \hlrow g000304 && 25 & 2 & 98.0 $\pm$ 0.0 && 9 & 2 & 98.0 $\pm$ 0.0  && 29 & 3 & 98.0 $\pm$ 0.0 && 15 & 3 & 98.0 $\pm$ 0.0 \\
   \hlrow g000496 && 889 & 18 & 2460.0 $\pm$ 18.9 && 868 & 17 & 2461.6 $\pm$ 20.9  && 987 & 22 & 2476.8 $\pm$ 23.9 && 963 & 17 & 2467.5 $\pm$ 24.1 \\
   \hlrow g000577 && 2794 & 49 & 6344.1 $\pm$ 180.2 && 2548 & 43 & 6723.9 $\pm$ 138.5  && 2958 & 60 & 6777.5 $\pm$ 49.4 && 3003 & 60 & 6794.8 $\pm$ 83.4 \\
   \hlrow g000655 && 24 & 2 & 98.0 $\pm$ 0.0 && 10 & 2 & 98.0 $\pm$ 0.0  && 27 & 2 & 98.0 $\pm$ 0.0 && 27 & 2 & 98.0 $\pm$ 0.0 \\
   \hlrow g000742 && 965 & 20 & 2965.7 $\pm$ 16.0 && 911 & 19 & 2948.4 $\pm$ 23.8  && 1113 & 22 & 3000.7 $\pm$ 5.2 && 1055 & 20 & 3001.6 $\pm$ 4.3 \\
   \hlrow g000812 && 4320 & 83 & 8622.9 $\pm$ 426.9 && 4120 & 77 & 8548.4 $\pm$ 659.6 && - & - & - && - & - & -\\
   \hlrow g001328 && 2786 & 57 & 5528.8 $\pm$ 693.1 && 2647 & 55 & 6240.5 $\pm$ 335.0  && 3305 & 67 & 6494.6 $\pm$ 142.7 && 3305 & 67 & 6525.1 $\pm$ 112.6 \\
   \hlrow g001371 && 23 & 2 & 98.0 $\pm$ 0.0 && 10 & 2 & 98.0 $\pm$ 0.0  && 28 & 3 & 98.0 $\pm$ 0.0 && 13 & 3 & 98.0 $\pm$ 0.0 \\
   \hlrow g001634 && 23 & 2 & 98.0 $\pm$ 0.0 && 8 & 2 & 98.0 $\pm$ 0.0  && 27 & 3 & 98.0 $\pm$ 0.0 && 15 & 3 & 98.0 $\pm$ 0.0 \\
   \hlrow g001860 && 32 & 2 & 153.6 $\pm$ 0.8 && 3 & 2 & 153.6 $\pm$ 0.8  && 28 & 3 & 154.0 $\pm$ 0.0 && 7 & 2 & 154.0 $\pm$ 0.0 \\
   \hlrow g001933 && 5122 & 105 & 7888.8 $\pm$ 269.0 && 5424 & 105 & 7771.3 $\pm$ 266.0 && - & - & - && - & - & -\\
   \hlrow g002031 && 26 & 2 & 97.0 $\pm$ 0.0 && 11 & 2 & 97.0 $\pm$ 0.0  && 28 & 3 & 97.0 $\pm$ 0.0 && 15 & 3 & 97.0 $\pm$ 0.0 \\
   \hlrow g002294 && 933 & 16 & 2695.5 $\pm$ 42.4 && 933 & 16 & 2717.8 $\pm$ 24.5  && 983 & 16 & 2702.3 $\pm$ 36.5 && 983 & 16 & 2692.0 $\pm$ 51.4 \\
   \hlrow g002510 && 3509 & 41 & 6701.8 $\pm$ 419.1 && 2831 & 34 & 7711.6 $\pm$ 314.0  && 3499 & 38 & 7954.0 $\pm$ 107.4 && 3499 & 38 & 8099.3 $\pm$ 81.7 \\
   \hlrow g002756 && 31 & 2 & 154.0 $\pm$ 0.0 && 7 & 2 & 154.0 $\pm$ 0.0  && 27 & 2 & 153.8 $\pm$ 0.6 && 4 & 2 & 153.8 $\pm$ 0.6 \\
   \hlrow g002783 && 22 & 2 & 97.0 $\pm$ 0.0 && 10 & 2 & 97.0 $\pm$ 0.0  && 29 & 3 & 97.0 $\pm$ 0.0 && 22 & 3 & 97.0 $\pm$ 0.0 \\
   \hlrow g003074 && 979 & 24 & 2989.0 $\pm$ 32.9 && 979 & 24 & 2993.8 $\pm$ 24.6  && 1429 & 29 & 3022.6 $\pm$ 16.1 && 1271 & 27 & 3037.4 $\pm$ 14.4 \\
   \hline
  \end{tabular}
\end{table*}

\hl{
In addition to the synthetic QUBO instances tested in the main text, we conduct experiments on MQLib~\mbox{\cite{dunning2018works}}, a large collection of the max-cut/QUBO instances.
The MQLib dataset consists of several groups of max-cut/QUBO instances from various domains, including the famous Gset\footnote{https://web.stanford.edu/{\raise.17ex\hbox{$\scriptstyle\sim$}}yyye/yyye/Gset/} graphs.
In the following, each instance is formulated as a max-cut problem on a weighted graph admitting negative weights.
For a graph $G=(V, E)$ with weights $w=(w_{uv})_{(u,v) \in E} \in \mathbb R^E$ on edges, the max-cut problem asks to find a cut $S\subset V$ maximizing the cut value
}
\begin{align}
    C(S) \coloneqq \sum_{u\in S} \sum_{v \in V\setminus S} w_{uv}, 
\end{align}
\hl{
where we define $w_{uv} \coloneqq 0$ for $(u,v)\notin E$ and $w_{uv} \coloneqq w_{vu}$ for $(v,u) \in E$.
By assigning a binary variable $\sigma = (\sigma_v)_{v\in V} \in \{\pm 1\}^V$ on vertices, the max-cut problem is formulated as
}
\begin{align}
    \max_{\sigma \in \{\pm 1\}^V} \sum_{(u,v)\in E} \frac{1-\sigma_u \sigma_v}{2} w_{uv}.
\end{align}
\hl{
The max-cut problem can be reduced to a QUBO problem by substituting $\sigma_v = 2x_v - 1$ with binary variables $x_v\in \{0,1\}, v\in V$.
Therefore, we apply the proposed method after the conversion, and evaluate solutions in terms of the cut values.
}

\hl{
The procedure of the experiment is as follows.
First, for every problem instance, we apply the roof-duality method (see Appendix~\mbox{\ref{app:review_preprocessing}}) as a preprocessing.
The aim of this is to test the applicability of the proposed method on more realistic settings where standard preprocessing is applied.
Then we extract valid partial orders by executing Algorithm~\mbox{\ref{alg:extract_order}} on all instances.
We collect ``linearizable'' instances on which a non-empty edge set of valid partial order is found.
We sample 10 solutions for each of those instances using the Ising machine with and without linearization and compare the cut values of the solutions.
Next, we run minor-embedding search with the minorminer algorithm on the collected instances with and without linearization.
For each obtained embedding, we again sample 10 solutions for each instance using the Ising machine and evaluate their cut values.
}

\hl{
We summarize the basic statistics of problem instances and the number of linearizable instances in Table~\mbox{\ref{tab:mqlib}}.
We label the dataset categories by the file name of each instance.
The ``Others'' category includes instances from various domains from synthetic random graphs to real problems such as graph coloring\footnote{We refer to the MQLib repository for the detailed labeling: https://github.com/MQLib/MQLib}.
After the roof-duality preprocessing, 24 linearizable instances are found in the ``Others'' category.
For comparison, we also run Algorithm~\mbox{\ref{alg:extract_order}} without the preprocessing and report the number of the linearizable instances.
Without the preprocessing, a valid partial order is found on three instances in Gset.
In ``Others'' category, the preprocessing reduces the number of linearizable instances from 106 to 24.
These results indicate that several instances can be linearized but not reduced, while there is some overlap between linearizable instances and instances to be reduced by the roof-duality method.
}

\hl{
We show the results of the Ising machine performance on linearizable instances in Table~\mbox{\ref{tab:mqlib_energy_without_embedding}}.
The 24 linearizable instances distribute on the domains of Prize-collecting Steiner Tree (PCST), Planer random graph (Planer), Isomorphism testing (Isom.), and Graph Coloring (Coloring).
The cut values averaged over 10 solutions are reported with the standard deviation.
On all 24 instances, we do not observe significant differences in the cut values.
We hypothesize that this is because the Ising machine already reaches near-optimal solutions without linearization and there is little room for improvement.
}

\hl{
The results of minor-embedding to the Pegasus graph $P_{16}$ and Zephyr graph $Z_{15}$ are shown in Table~\mbox{\ref{tab:embedding_mqlib}}.
We omit instances that cannot be embedded to either of them.
We also omit instance g000599 since the number of nodes after the preprocessing is too small.
For instances g000812 and g001933, only embedding to $Z_{15}$ is found.
Regarding the quality of minor-embedding, the linearization generally reduces the number of auxiliary variables and the maximum chain length.
In some cases, the number of auxiliary variables increases after linearization even though a linearized connectivity graph is a subgraph of the original graph.
This is because the minorminer algorithm involves randomness and incidentally cannot find better embedding for the linearized problem than the baseline.
For the $Z_{15}$ graph, the cut value is significantly increased on instances g000577, g001328, and g002510 by linearization.
For the $P_{16}$ graph, similar improvement is observed on instance g002510.
Overall, we conclude that the proposed method mitigates the performance degradation due to minor-embedding on several instances in MQLib.
}

\end{document}